\newcolumntype{d}[1]{D{.}{.}{#1}}
\newtheorem{corollary}{Corollary}
\newtheorem{theorem}{Theorem}
\newtheorem{lemma}{Lemma}
\newtheorem{proposition}{Proposition}
\begin{document}
\onehalfspacing

\title{Chance Constrained Program with Quadratic Randomness: A Unified Approach Based on Gaussian Mixture Distribution}
\author{Xiaochuan Pang \thanks{School of Business, Sun Yat-Sen University,
Guangzhou 510275, China. Email: pangxch@mail2.sysu.edu.cn.} \quad Shushang Zhu \thanks{Corresponding author. School of Business, Sun Yat-Sen University, Guangzhou 510275, China. Email: zhuss@mail.sysu.edu.cn.} \quad Zhaolin Hu \thanks{School of Economics and Management, Tongji University, Shanghai 200092, China. Email: russell@tongji.edu.cn}
}

\maketitle

\begin{abstract}
This paper investigates the stochastic program with the chance constraint on a quadratic form of random variables following multivariate Gaussian mixture distribution (GMD). Under some mild conditions, it is proved that the asymptotic distribution of this kind of quadratic randomness is a univariate GMD. This finding helps to translate the chance constrained program into a more tractable one, based on which an effective branch-and-bound algorithm that takes advantage of the special structure of the problem is introduced to search the global optimal solution. Furthermore, it is shown that the error resulting from approximating the quadratic randomness with its associated asymptotic distribution can be reduced by restricting the condition numbers of covariance matrices of the multivariate GMD's components. In addition, some numerical simulations are also carried out to verify the effectiveness of this flexible and unified approach.
\end{abstract}
\vskip 0.3cm\hskip 0.3cm
\textit{Keywords:} Chance constrained program, Gaussian mixture distribution, Quadratic randomness, Asymptotic distribution, Branch-and-bound

\section{Introduction}
Chance constrained program (CCP) is a typical stochastic program for dealing with random uncertainty in decision making. Initial from \cite{charnes1958}, \cite{miller1965} and \cite{prekopa1970}, CCP has been widely applied in engineering, finance, and other fields. For instance, in finance, the widely encountered problem of maximizing the portfolio return subject to the constraint on value-at-risk (VaR) is actually a special case of CCP, where VaR is the risk measure characterized by a quantile of the loss distribution (see, e.g., \cite{jorion2007}). However, as \cite{nemirovski2006} point out, even with the linear structure on the chance constraint, the problem may still be non-convex. Thus CCP is generally computationally intractable, which hinders the applications of the model in practice.

Generally, CCP considers the probabilistic constraint on a randomness that is a function of some random variables and decision variables. Due to the difficulty in solving CCP, previous literature mainly considers the  randomness taking a linear form of the random variables. \cite{shapiro2009} discuss the conditions on the form of randomness and the distribution of random variables under which the CCP is a convex program. \cite{henrion2012} provide the gradient formula for linear chance constraints under a possibly singular multivariate Gaussian distribution. There is a branch of literature on linear distributionally robust CCP or its variant VaR optimization via defining the uncertain set of the distribution of random variables with different approaches, such as the known first- and second-moment information of \cite{el2003}, the radially symmetric distribution set of \cite{calafiore2006}, and the known expectation and dispersion function of \cite{hanasusanto2017}.

In this paper, we consider the general form of quadratic randomness in CCP. More specifically, let us introduce the following quadratic randomness:
\begin{eqnarray}\label{quadratic-form}
&&c(\bm{\xi},\bm{x})= \frac{1}{2}\bm\xi^{\top}A(\bm{x})\bm\xi+\bm{a}(\bm{x})^{\top}\bm{\xi}+a(\bm{x}),
\end{eqnarray}
where $\bm{\xi}\in\mathbb{R}^m$ is a random vector with probability distribution $p^*$, $\bm{x}\in\mathbb{R}^n$ is the decision vector, $A:\mathbb{R}^{n}\rightarrow\mathbb{R}^{m\times m}$ is a real symmetric matrix, $\bm{a}:\mathbb{R}^{n}\rightarrow\mathbb{R}^{m}$ and $a:\mathbb{R}^{n}\rightarrow\mathbb{R}$. We consider the following CCP:
\begin{eqnarray*}
{\rm QCCP}&\min\limits_{\bm{x}\in \mathscr{X}}&\rho(\bm{x})\\
&{\rm s.t.}& {\rm Pr}_{\sim p^*}\left\{c(\bm{\xi},\bm{x})\leq0\right\}\geq1-\alpha,
\end{eqnarray*}
where $\mathscr{X}\subseteq\mathbb{R}^n$ is a nonempty convex set, $\alpha\in(0,1)$, $\rho:\mathbb{R}^{n}\rightarrow\mathbb{R}$, $c:\mathbb{R}^{m}\times\mathbb{R}^{n}\rightarrow\mathbb{R}$ and ${\rm Pr}_{\sim p^*}$ denotes the probability that is taken with respect to $p^*$.

In the literature, there exist some related works on some special cases of QCCP. \cite{zymler2013} consider a worst-case chance constraint where $p^*$ is in a set of all probability distributions with the same first- and second-moment. They prove that if $c(\bm{\xi},\bm{x})$ is a quadratic function of $\bm{\xi}$, the worst-case QCCP is equivalent to the worst-case CVaR problem. \cite{cui2013} study the optioned portfolio selection problem under the mean-VaR framework, which is a special case of QCCP. In their model, $\bm{\xi}$ is supposed to be normally distributed. Based on the widely used Delta-Gamma-Normal method which can be referred to \cite{hull2009}, they further assume that $c(\bm{\xi},\bm{x})$ follows a normal distribution and then the problem can be reformulated as a second-order cone program (SOCP). \cite{zhu2020} prove that the asymptotic distribution of $c(\bm{\xi},\bm{x})$ remains normal distribution if $\bm{\xi}$ is normally distributed under some mild conditions, which provides the theoretical basis for the Delta-Gamma-Normal method. \cite{kishida2023} study a system control problem with a similar structure to QCCP. They assume that the distribution of $\bm{\xi}$ lies in a probability set with known mean and variance, and consider the worst-case chance constraint, which is further approximated by worst-case CVaR for tractability. It is evident that how to specify the distribution of $\bm{\xi}$ is crucial for solving QCCP.

The procedure of specifying the distribution of underlying random factors is usually called input modeling, which not only affects the choice of solution methods, but also directly affects the rationality of practical application. Therefore, it is meaningful and necessary to develop a unified approach that combines the input modeling with CCP, so that it can be applied to as many cases as possible. In addition, in many application fields, there usually exist substantial observations of the random factors, and thus it is important to estimate the true distribution by learning the data. The mixture distribution model, which is first used by \cite{pearson1894} in the biological field, is a flexible approach for learning the data. The most attractive characteristic of the mixture distribution is its fitting capacity. The density function of mixture distribution, which is the weighted sum of a finite number of density functions, can approximate many density functions within sufficient precision. More properties about the mixture model can be referred to \cite{mclachlan}. Theoretically, \cite{wilson2000} proves that any univariate integrable distribution function can be approximated by Gaussian mixture distribution (GMD) within any precision. Distributions with some common and even unusual features, such as skewness, fat tail, and multimodality can be well modeled by GMD (\cite{marron1992}).

 Recently, the combination of CCP and GMD has already been studied in both theory and application. \cite{chen2018} propose a convex approximation approach to solve the robust CCP where the random parameters are modeled by GMD and the weights on components lie in an uncertainty set based on moment estimation. \cite{hu2021} consider some typical forms of joint and single CCPs where the random parameters are modeled by GMD. They derive the first-order method and spatial branch-and-bound approach to search the local and the global solutions, respectively. In the field of distributed generation, \cite{chen2022} assume that the wind output follows GMD and solve a CCP to determine the optimal power scheduling of generators. \cite{ren2022} use GMD to model the multimodal behaviors of obstacles' uncertain states and use CCP to guarantee the safety of the trajectory planning. 

There are some general scenario approximation methods that can be used to solve QCCP. \cite{campi2005} and \cite{campi2008} prove the lower bound on the number of samples that guarantees the optimal solution of the scenario approach satisfies the chance constraint with a high confidence level. \cite{campi2011} consider sample discarding and further study the lower bound on the number of samples. \cite{luedtke2008} replace the violation probability of the chance constraint with the empirical violation probability, which can be reformulated as a mixed integer program. In this paper, different from these approaches, we derive a unified approach for QCCP based on GMD. More specifically, we use multivariate GMD to model the distribution of the random variables and show that the asymptotic distribution of the quadratic form of those random variables follows a univariate GMD under some mild conditions. And this result is further used to reformulate QCCP as a deterministic program with analytical form and then the branch-and-bound algorithm or the first-order method proposed by \cite{hu2021} can be applied to derive the optimal solution. Furthermore, we show the relationship between the convergence rate and other factors, among which the condition numbers of covariance matrices of GMD's components are highlighted. To reduce the asymptotic approximation error, we suggest to use condition number constrained GMD to model the distribution of random variables. In addition, we show that this condition number constrained GMD can still approximate any integrable density function within any precision. The main contributions of this paper are as follows:

$\centerdot$ This paper proposes a unified approach for solving CCP with quadratic randomness based on GMD. Although the true distribution of the quadratic randomness has no analytic form, an approximated global solution can be obtained by combining the asymptotic distribution approximation and the branch-and-bound algorithm.

$\centerdot$ This paper uncovers the factors that affect the convergence rate of the asymptotic distribution of the quadratic randomness. It is proved that a condition number constrained GMD leads to a faster convergence for the quadratic randomness without loss of fitting capability and flexibility.

$\centerdot$ The proposed approach can be applied to a wide range of areas where data are available. Unlike other commonly used distributions, GMD can learn the characteristics of distribution from the data, e.g., skewness, fat tail, and multimodal shape. Therefore, our model is especially beneficial to those areas where the data are hard to depict.

The rest of this paper is organized as follows. In Section 2, we discuss the asymptotic distribution of the quadratic form of random variables following GMD. We also explore the associated factors that affect the convergence rate, based on which we investigate how to reduce the approximation errors while using the asymptotic distribution as a proxy of the real one. In Section 3, we reformulate the QCCP and apply the branch-and-bound algorithm to solve the optimization problem globally under the assumption that $c(\bm{\xi},\bm{x})$ is a linear function of $\bm{x}$. In Section 4, numerical experiments are executed to verify the theoretical findings and test the effectiveness of the proposed method. The concluding remark is provided in Section 5.

\section{Asymptotic Properties of Quadratic Randomness under GMD}
Due to the universal approximation ability of GMD, we suppose in this paper that $\bm{\xi}$ follows a GMD with $K$ Gaussian components. More specifically, the density function of $\bm{\xi}$ is given by
\begin{eqnarray} \label{GMD}
&&p^*(\bm{z}) = \sum_{i=1}^K\pi_ip(\bm{z}|\bm{\mu}_i,\Sigma_i),
\end{eqnarray}
where the mixture weight $\pi_i>0$, $\sum_{i=1}^K\pi_i=1$ and $p(\bm{z}|\bm{\mu}_i,\Sigma_i)$ (abbreviated as $p_i$) denotes the Gaussian density function with mean $\bm{\mu}_i$ and covariance matrix $\Sigma_i$. Under this setting, we first discuss the asymptotic distribution of the quadratic form of  $\bm{\xi}$. Then we analyze the convergence rate of the asymptotic distribution. Finally, we discuss the factors that affect the error between the asymptotic distribution and the true distribution, based on which we further explore how to reduce the approximation errors.
\subsection{Asymptotic Distribution of Quadratic Randomness}
It will be shown that, under some mild conditions, the asymptotic distribution of the quadratic form of random vector following a GMD remains a GMD. We start the analysis from the first- and second-moment of the quadratic form of a random vector following a GMD in the sequel.
\begin{proposition}\label{moment}
Suppose that $\bm{\xi}$ follows a GMD defined by (\ref{GMD}). Then for a fixed decision vector $\bm{x}$, the mean and variance of $c(\bm{\xi},\bm{x})$ defined by (\ref{quadratic-form}) are given as
\begin{eqnarray*}
E_{p^*}(c(\bm{\xi},\bm{x})) &=& \sum_{i=1}^{K}\pi_iE_{p_i}(c(\bm{\xi},\bm{x}))\\
V_{p^*}(c(\bm{\xi},\bm{x})) &=& \sum_{i=1}^{K}\pi_iV_{p_i}(c(\bm{\xi},\bm{x}))+\sum_{1\leq i<j\leq K}\pi_i\pi_j(E_{p_i}(c(\bm{\xi},\bm{x}))-E_{p_j}(c(\bm{\xi},\bm{x})))^2,
\end{eqnarray*}
where $E_{p_i}(\bm{\xi},\bm{x})$ and $V_{p_i}(\bm{\xi},\bm{x})$ are the mean and variance with respect to the probability distribution $p_i$, which are given by
\begin{eqnarray*}
E_{p_i}(c(\bm{\xi},\bm{x}))&=&\frac{1}{2}{\rm tr}\left(A(\bm x)\Sigma_i\right)+\frac{1}{2}\bm{\mu}_i^{\top}A(\bm x)\bm{\mu}_i+\bm{a}(\bm x)^{\top}\bm{\mu}_i+a(\bm x)\\
V_{p_i}(c(\bm{\xi},\bm{x}))&=&\frac{1}{2}{\rm tr}\left((A(\bm x)\Sigma_i)^2\right)+\left(A(\bm x)\bm{\mu}_i+\bm{a}(\bm x)\right)^{\top}\Sigma_i\left(A(\bm x)\bm{\mu}_i+\bm{a}(\bm x)\right)
\end{eqnarray*}
respectively. Here, ${\rm tr}(\cdot)$ denotes the trace of a matrix.
\end{proposition}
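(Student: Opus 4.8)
The plan is to compute the moments of $c(\bm\xi,\bm x)$ by first establishing the single-component formulas (for $E_{p_i}$ and $V_{p_i}$) and then assembling the full mixture via the law of total expectation and total variance. The key observation is that a GMD is a convex combination of Gaussian densities, so for any integrable function $f$ we have $E_{p^*}(f(\bm\xi)) = \sum_{i=1}^K \pi_i E_{p_i}(f(\bm\xi))$ directly from (\ref{GMD}) by linearity of the integral. Applying this with $f = c$ gives the mean formula immediately, and applying it with $f = c^2$ gives $E_{p^*}(c^2)$, from which the variance follows as $V_{p^*}(c) = E_{p^*}(c^2) - (E_{p^*}(c))^2$.

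\textbf{Single-component moments.} First I would fix a component $i$, so that $\bm\xi \sim \mathcal{N}(\bm\mu_i,\Sigma_i)$, and suppress the dependence on $\bm x$ by writing $A=A(\bm x)$, $\bm a=\bm a(\bm x)$, $a = a(\bm x)$. The cleanest route is to center the variable: set $\bm\eta = \bm\xi - \bm\mu_i$, so $\bm\eta \sim \mathcal{N}(\bm 0,\Sigma_i)$, and rewrite $c(\bm\xi) = \tfrac12\bm\eta^\top A\bm\eta + (A\bm\mu_i + \bm a)^\top\bm\eta + \big(\tfrac12\bm\mu_i^\top A\bm\mu_i + \bm a^\top\bm\mu_i + a\big)$. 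Taking expectations and using the standard Gaussian identities $E(\bm\eta)=\bm 0$ and $E(\bm\eta^\top A\bm\eta) = \mathrm{tr}(A\Sigma_i)$ yields the stated $E_{p_i}$. For the variance I would use the fact that the constant term drops out and compute the variance of the quadratic-plus-linear part. Here I would invoke the well-known moment formulas for a centered Gaussian: $V(\tfrac12\bm\eta^\top A\bm\eta) = \tfrac12\mathrm{tr}((A\Sigma_i)^2)$, $V((A\bm\mu_i+\bm a)^\top\bm\eta) = (A\bm\mu_i+\bm a)^\top\Sigma_i(A\bm\mu_i+\bm a)$, and crucially that the covariance between the quadratic term and the linear term vanishes, because it involves the third moment of a centered (hence symmetric) Gaussian, which is zero. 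Summing the two variances gives the stated $V_{p_i}$.

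\textbf{Mixture assembly.} With the single-component formulas in hand, the mean formula for the mixture is a one-line consequence of linearity. For the mixture variance I would write $V_{p^*}(c) = E_{p^*}(c^2) - (E_{p^*}(c))^2 = \sum_i \pi_i E_{p_i}(c^2) - \big(\sum_i\pi_i E_{p_i}(c)\big)^2$. Substituting $E_{p_i}(c^2) = V_{p_i}(c) + (E_{p_i}(c))^2$ and expanding the square $\big(\sum_i\pi_i E_{p_i}(c)\big)^2 = \sum_i\pi_i^2 (E_{p_i}(c))^2 + 2\sum_{i<j}\pi_i\pi_j E_{p_i}(c)E_{p_j}(c)$, I would collect terms. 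Writing $m_i = E_{p_i}(c)$, the quadratic-in-means part becomes $\sum_i\pi_i m_i^2 - \sum_i\pi_i^2 m_i^2 - 2\sum_{i<j}\pi_i\pi_j m_i m_j$, and using $\sum_i\pi_i = 1$ to rewrite $\sum_i\pi_i m_i^2 = \sum_i(\sum_j\pi_j)\pi_i m_i^2$, a standard algebraic identity recasts this exactly as $\sum_{i<j}\pi_i\pi_j(m_i - m_j)^2$, which is the stated cross-term.

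\textbf{Main obstacle.} The only nontrivial point is verifying the single-component variance formula, specifically the two Gaussian moment identities $V(\tfrac12\bm\eta^\top A\bm\eta)=\tfrac12\mathrm{tr}((A\Sigma_i)^2)$ and the vanishing of the quadratic–linear covariance; these rest on Isserlis'/Wick's formula for fourth- and third-order moments of a centered multivariate Gaussian. Everything after that is bookkeeping: the mixture decomposition is just the law of total variance applied to the latent component indicator, and the rearrangement into the pairwise-difference form is a routine identity. I expect no genuine difficulty, only care in tracking the symmetric matrix $A$ and the cross-covariance cancellation.
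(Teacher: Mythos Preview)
Your proposal is correct and follows essentially the same route as the paper's proof: the mixture mean by linearity, the mixture variance via $V_{p^*}(c)=\sum_i\pi_i E_{p_i}(c^2)-(\sum_i\pi_i E_{p_i}(c))^2$ with the substitution $E_{p_i}(c^2)=V_{p_i}(c)+E_{p_i}(c)^2$, and the same $\sum_j\pi_j=1$ trick to collapse the means part into $\sum_{i<j}\pi_i\pi_j(m_i-m_j)^2$. The only difference is that the paper imports the single-component formulas $E_{p_i}(c)$ and $V_{p_i}(c)$ as a lemma from \cite{zhu2020}, whereas you derive them directly via the centering $\bm\eta=\bm\xi-\bm\mu_i$ and the Gaussian second/third/fourth moment identities; your version is thus slightly more self-contained but otherwise identical in substance.
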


\begin{proof}
See Appendix A.
\end{proof}

Now we turn to the asymptotic distribution of $c(\bm{\xi},\bm{x})$ where $\bm \xi$  follows GMD. For simplicity of reformulation, we omit the decision vector and denote $A(\bm x)$, ${\bm a}(\bm x)$ and $a(\bm x)$ as $A$, ${\bm a}$ and $a$, respectively. 

Without loss of generality, we assume that each $\Sigma _i$, $i\in\{1,\cdots,K\}$ is non-singular. Then we decompose $\Sigma_i$ as $\Sigma_i=\Sigma_i^{\frac{1}{2}}\Sigma_i^{\frac{1}{2}}$, and $\Sigma_i^{\frac{1}{2}}A\Sigma_i^{\frac{1}{2}}$ as $\Sigma_i^{\frac{1}{2}}A\Sigma_i^{\frac{1}{2}}=D_i\Lambda_iD_i^{\top}$ where $\Lambda_i$ is a diagonal matrix with diagonal elements being $\lambda_{i1},\cdots,\lambda_{im}$ and $D_i$ is the associated orthogonal matrix. Assume only the first $h_i$ diagonal elements of $\Lambda_i$ are nonzero, i.e.,
$\lambda_{i1},\lambda_{i2}\cdots,\lambda_{ih_i}\neq 0~ (0<h_i\leq m)$. Then we have
\begin{eqnarray}
c(\bm{\xi},\bm{x})
&=&
\frac{1}{2}\hat{\bm{\xi}}_i^{\top}\Sigma_i^{\frac{1}{2}}A\Sigma_i^{\frac{1}{2}}\hat{\bm{\xi}}_i+\bm{a}^{\top}\Sigma_i^{\frac{1}{2}}\hat{\bm{\xi}}_i+a
~~~~~~~~~~~~~~~~~~~~~~\left(\hat{\bm{\xi}}_i=\Sigma_i^{-\frac{1}{2}}\bm{\xi}\right)\nonumber\\
&=&
\frac{1}{2}\tilde{\bm{\xi}}^{\top}_i\Lambda_i\tilde{\bm{\xi}}_i+\bm{b}_i^{\top}\tilde{\bm{\xi}}_i+a
~~~~~~~~~~~~~~~~~~~~~~~~~~~~~~~~~\left(\tilde{\bm{\xi}}_i=D_i^{\top}\hat{\bm{\xi}}_i,\bm{b}_i=D_i^{\top}\Sigma_i^{\frac{1}{2}}\bm{a}\right)\nonumber\\
&=&
\frac{1}{2}\sum_{j=1}^{h_i}\lambda_{ij}\tilde{\xi}_{ij}^2+\sum_{j=1}^mb_{ij}\tilde{\xi}_{ij}+a\nonumber\\
&=&
\frac{1}{2}\sum_{j=1}^{h_i}\lambda_{ij}\left(\tilde{\xi}_{ij}+\frac{b_{ij}}{\lambda_{ij}}\right)^2+\sum_{j=h_i+1}^{m}b_{ij}\tilde{\xi}_{ij}+c_i
~~~~~\left(c_i=a-\frac{1}{2}\sum_{j=1}^{h_i}\frac{b^2_{ij}}{\lambda_{ij}}\right)\nonumber\\
&=&
\frac{1}{2}\sum_{j=1}^{h_i}\lambda_{ij}\zeta_{ij}^2+\sum_{j=h_i+1}^mb_{ij}\tilde{\xi}_{ij}+c_i.
~~~~~~~~~~~~~~~~~~~\left(\zeta_{ij}=\tilde{\xi}_{ij}+\frac{b_{ij}}{\lambda_{ij}}\right)\label{qua_refor}
\end{eqnarray}

If $\bm{\xi}$ follows the normal distribution $p_i$, then both the covariance matrices of  $\hat{\bm{\xi}}_i$ and $\tilde{\bm{\xi}}_i$ are identity matrix. Thus $\zeta_{ij}$ ($j=1,\cdots,h_i$) and $\tilde{\xi}_{ij}$ ($j=h_i+1,\cdots,m$) are independent normal random variables with unit variances and means $\delta_{ij}=d_{ij}+\frac{b_{ij}}{\lambda_{ij}}$ ($j=1,\cdots,h_i)$ and $d_{ij}$ ($j=h_i+1,\cdots,m$), respectively. Here, $d_{ij}$ is the $j$th element of $D_i^{\top}\Sigma_i^{-\frac{1}{2}}\bm{\mu}_i$. Notice that if $\Sigma_i$ is singular, we can also decompose $c(\bm{\xi},\bm{x})$ into the form of (\ref{qua_refor}), but a more complicated process of reformulation is required.

According to the L\'{e}vy's continuity lemma (\cite{van1998}) which is summarized in Appendix B, the convergence of characteristic function is equivalent to the convergence in distribution. Noting that the above reformulation is the same for each probability distribution $p_i$, by investigating the characteristic function of $\frac{c(\bm{\xi},\bm{x})}{\sqrt{V_{p^*}(c(\bm{\xi},\bm{x}))}}$, we have the following results on the asymptotic distribution of $c(\bm{\xi},\bm{x})$ while $\bm{\xi}$ follows a GMD.

\begin{theorem}\label{asy_GMD}
Suppose that $\bm{\xi}$ follows a GMD defined by (\ref{GMD}), and $h_i\rightarrow+\infty$ as $m\rightarrow+\infty$, $i=1,\cdots,K$. Then
\begin{eqnarray*}
&&\lim\limits_{m\rightarrow+\infty}\left(E_{p*}\left(e^{{\rm i}t\frac{c(\bm{\xi},\bm{x})}{\sigma}}\right)-
\sum_{i=1}^K\pi_ie^{{\rm i}\frac{\mu_i}{\sigma}t-\frac{1}{2}\frac{\sigma_i^2}{\sigma^2}t^2}\right)=0,
\end{eqnarray*}
where $\mu_i=E_{p_i}(c(\bm{\xi},\bm{x}))$, $\sigma_i^2=V_{p_i}(c(\bm{\xi},\bm{x}))$ and $\sigma^2=V_{p^*}(c(\bm{\xi},\bm{x}))$, if
\begin{eqnarray}\label{assumption1}
&&\lim\limits_{m\rightarrow+\infty}\frac{\left(\sum_{j=1}^{h_i}\left(\frac{1}{k}+\delta_{ij}^2\right)\lambda^k_{ij}\right)^{\frac{1}{k}}}
{\left(\sum_{j=1}^{h_i}\left(\frac{1}{2}+\delta_{ij}^2\right)\lambda_{ij}^2\right)^{\frac{1}{2}}}=0
\end{eqnarray}uniformly holds for any $k\geq3$ and for $i=1,\cdots,K$.
\end{theorem}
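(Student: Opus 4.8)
The plan is to exploit the mixture structure to reduce the claim to a component-wise central limit theorem, and then to prove that CLT through a cumulant expansion of the characteristic function. Since $\bm\xi$ has density $p^*=\sum_i\pi_i p_i$, the characteristic function splits as $E_{p^*}(e^{{\rm i}tc/\sigma})=\sum_{i=1}^K\pi_i E_{p_i}(e^{{\rm i}tc/\sigma})$, so it suffices to show, for each fixed $t$ and each $i$, that $E_{p_i}(e^{{\rm i}tc/\sigma})-e^{{\rm i}\mu_i t/\sigma-\frac12\sigma_i^2t^2/\sigma^2}\to0$ as $m\to\infty$; summing against the weights $\pi_i$ then yields the theorem. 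The point of the reformulation (\ref{qua_refor}) is that, under $p_i$, the variables $\zeta_{ij}^2$ ($j\le h_i$) and $\tilde\xi_{ij}$ ($j>h_i$) are mutually independent, so $c$ is a constant $c_i$ plus an independent sum of $h_i$ scaled noncentral $\chi^2_1$ variables $\frac12\lambda_{ij}\zeta_{ij}^2$ and $m-h_i$ Gaussian variables $b_{ij}\tilde\xi_{ij}$. The target being a Gaussian characteristic function, this is precisely a Lyapunov-type CLT for an independent triangular array.

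I would make the reduction concrete by factorizing and taking logarithms. Using the closed forms $E(e^{{\rm i}s\zeta^2})=(1-2{\rm i}s)^{-1/2}\exp\!\big(\tfrac{{\rm i}s\delta^2}{1-2{\rm i}s}\big)$ for $\zeta\sim N(\delta,1)$ and $E(e^{{\rm i}sW})=\exp({\rm i}sd-\tfrac12 s^2)$ for $W\sim N(d,1)$, independence gives
\[
\log E_{p_i}\!\big(e^{{\rm i}tc/\sigma}\big)=\tfrac{{\rm i}t c_i}{\sigma}+\sum_{j=1}^{h_i}\Big[-\tfrac12\log\!\big(1-\tfrac{{\rm i}t\lambda_{ij}}{\sigma}\big)+\tfrac{({\rm i}t\lambda_{ij}/2\sigma)\delta_{ij}^2}{1-{\rm i}t\lambda_{ij}/\sigma}\Big]+\sum_{j=h_i+1}^{m}\Big[\tfrac{{\rm i}tb_{ij}d_{ij}}{\sigma}-\tfrac{t^2b_{ij}^2}{2\sigma^2}\Big].
\]
Each summand over $j\le h_i$ is the cumulant generating function of $\frac12\lambda_{ij}\zeta_{ij}^2$ evaluated at ${\rm i}t/\sigma$, whose Taylor coefficients are the cumulants $\kappa_{k,ij}=\frac{k!}{2}(\frac1k+\delta_{ij}^2)\lambda_{ij}^k$; the Gaussian summands carry only first and second cumulants. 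Collecting orders, the $k=1$ and $k=2$ coefficients sum across all terms (including $c_i$ and the Gaussians) to exactly $E_{p_i}(c)=\mu_i$ and $V_{p_i}(c)=\sigma_i^2$, recovering the mean and variance of Proposition \ref{moment}, so $\log E_{p_i}(e^{{\rm i}tc/\sigma})=\frac{{\rm i}\mu_i t}{\sigma}-\frac{\sigma_i^2t^2}{2\sigma^2}+R_i(t)$ with remainder $R_i(t)=\sum_{k\ge3}\frac{({\rm i}t/\sigma)^k}{k!}\sum_{j=1}^{h_i}\kappa_{k,ij}$ coming solely from the $\chi^2$ part.

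The remaining and decisive step is to show $R_i(t)\to0$, which is exactly where hypothesis (\ref{assumption1}) enters. Writing $D_i^2:=\sum_{j=1}^{h_i}(\frac12+\delta_{ij}^2)\lambda_{ij}^2$ for the denominator in (\ref{assumption1}), Proposition \ref{moment} gives $\sigma^2=V_{p^*}(c)\ge\pi_i V_{p_i}(c)=\pi_i\sigma_i^2\ge\pi_i D_i^2$, hence $1/\sigma\le1/(\sqrt{\pi_i}D_i)$. Bounding term by term,
\[
|R_i(t)|\le\frac12\sum_{k\ge3}\frac{|t|^k}{\sigma^k}\sum_{j=1}^{h_i}\Big(\tfrac1k+\delta_{ij}^2\Big)|\lambda_{ij}|^k\le\frac12\sum_{k\ge3}\Big(\frac{|t|}{\sqrt{\pi_i}}\,r_{i,k}\Big)^k,\qquad r_{i,k}:=\frac{\big(\sum_{j=1}^{h_i}(\tfrac1k+\delta_{ij}^2)|\lambda_{ij}|^k\big)^{1/k}}{D_i}.
\]
The ratios $r_{i,k}$ are precisely those in (\ref{assumption1}), so by assumption $\varepsilon_m:=\sup_{k\ge3}r_{i,k}\to0$; for $m$ large enough that $|t|\varepsilon_m/\sqrt{\pi_i}\le\tfrac12$, the series is dominated by a geometric tail starting at $k=3$, whence $|R_i(t)|\to0$ and $E_{p_i}(e^{{\rm i}tc/\sigma})=e^{{\rm i}\mu_i t/\sigma-\sigma_i^2t^2/2\sigma^2}e^{R_i(t)}$ converges to the claimed Gaussian characteristic function, the prefactor being bounded by $1$.

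I expect the main obstacle to be this uniform control over all $k\ge3$ simultaneously. It is needed not only to sum the cumulant series, but also—because $r_{i,k}\to\max_j|\lambda_{ij}|/D_i$ as $k\to\infty$ for fixed $m$—to guarantee $\max_j|\lambda_{ij}|/\sigma\to0$, i.e.\ that each logarithm $\log(1-{\rm i}t\lambda_{ij}/\sigma)$ stays within its radius of convergence for large $m$ and any fixed $t$. The standing hypothesis $h_i\to\infty$ underlies all of this: it spreads the $\chi^2$ mass over growingly many asymptotically negligible coordinates, which is what makes (\ref{assumption1}) attainable and the Lyapunov normalization the correct one.
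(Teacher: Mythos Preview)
Your proposal is correct and follows essentially the same argument as the paper: reduce to a single component via the mixture decomposition, factorize the characteristic function by independence under $p_i$, Taylor-expand the log using the noncentral $\chi^2$ characteristic function, identify the $k=1,2$ terms with $\mu_i,\sigma_i^2$ via Proposition~\ref{moment}, and kill the $k\ge3$ tail by a geometric bound powered by assumption~(\ref{assumption1}) together with the chain $\sigma^2\ge\pi_i\sigma_i^2\ge\pi_iD_i^2$. One small slip: your $r_{i,k}$ uses $|\lambda_{ij}|^k$, whereas (\ref{assumption1}) is stated with $\lambda_{ij}^k$, so the ratios are not ``precisely'' those in the hypothesis; the trivial fix is to bound $\big|\sum_j(\tfrac1k+\delta_{ij}^2)\lambda_{ij}^k\big|$ directly rather than pushing the absolute value inside, exactly as the paper does.
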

\begin{proof}
First, we show that
\begin{eqnarray*}
&&\lim\limits_{m\rightarrow+\infty}\left(E_{p_i}\left(e^{{\rm i}t\frac{c(\bm{\xi},\bm{x})}{\sigma}}\right)-e^{{\rm i}\frac{\mu_i}{\sigma}t-\frac{1}{2}\frac{\sigma_i^2}{\sigma^2}t^2}\right)=0,~i=1,\cdots,K.
\end{eqnarray*}
Since the proof of the above equation is the same for each $p_i$, we consider the case $i=1$ without loss of generality and simplify the notations as $\delta_{1j}=\delta_j$, $\zeta_{1j}=\zeta_j$, $\lambda_{1j}=\lambda_j$, $b_{1j}=b_j$, $d_{1j}=d_j$, $c_1=c$ and $h_1=h$. 

Denote $S_h=\prod\limits_{j=1}^hE_{p_1}\left(e^{{\rm i}t\frac{\lambda_{j}\zeta_j^2}{2\sigma}}\right)$. According to (\ref{qua_refor}) and the characteristic function of the normally distributed random variable, if $h<m$, we have 
\begin{eqnarray}\label{P4-1}
E_{p_1}\left(e^{{\rm i}t\frac{c(\bm{\xi},\bm{x})}{\sigma}}\right)&=&
\prod\limits_{j=1}^hE_{p_1}\left(e^{{\rm i}t\frac{\lambda_{j}\zeta_j^2}{2\sigma}}\right)
\prod\limits_{j=h+1}^mE_{p_1}\left(e^{{\rm i}t\frac{b_{j}\tilde{\xi}_j}{\sigma}}\right)
E_{p_1}\left(e^{{\rm i}t\frac{c}{\sigma}}\right)
=S_he^{{\rm i}q_1t-\frac{1}{2}q_2t^2}, 
\end{eqnarray}
where $q_1=\frac{c}{\sigma}+\sum\limits_{j=h+1}^{m}\frac{b_jd_j}{\sigma}$ and $q_2=\sum\limits_{j=h+1}^{m}\frac{b_j^2}{\sigma^2}$. Otherwise, for $h=m$, there is no linear term in $c(\bm{\xi},\bm{x})$, i.e., $E_{p_1}\left(e^{{\rm i}t\frac{c(\bm{\xi},\bm{x})}{\sigma}}\right)=S_he^{{\rm i}\frac{c}{\sigma}t}$, and the proof is a special case of the following one.

Let us first focus on $S_h$ only in the sequel. Since $\bm \xi$ is normally distributed with density $p_1$, $\zeta_j^2$ is a non-central Chi-square random variable and its characteristic function is $(1-{\rm i}2t)^{-\frac{1}{2}}e^{\frac{{\rm i}t}{1-{\rm i}2t}\delta_j^2}$. Thus we have
\begin{eqnarray*}
&&S_h=\prod\limits_{j=1}^hE_{p_1}\left(e^{{\rm i}\frac{\lambda_{j}t}{2\sigma}\zeta_j^2}\right)
=\prod_{j=1}^{h}\left(1-{\rm i}\frac{\lambda_{j}}{\sigma}t\right)^{-\frac{1}{2}}e^{\frac{{\rm i}\frac{\lambda_{j}}{2\sigma}t}{1-{\rm i}\frac{\lambda_{j}}{\sigma}t}\delta_j^2}.
\end{eqnarray*}
Denote $\gamma_j=\frac{\lambda_{j}}{\sigma}$. Taking logarithm on both sides of the above equation derives
\begin{eqnarray}\label{P4-2}
&&{\rm ln}\left(S_h\right)=-\frac{1}{2}\sum_{j=1}^{h}{\rm ln}(1-{\rm i}\gamma_jt)+\frac{1}{2}\sum_{j=1}^{h}\frac{{\rm i}\gamma_jt}{1-{\rm i}\gamma_jt}\delta_j^2.
\end{eqnarray}

Before taking Taylor's expansion of the equation (\ref{P4-2}) with respect to ${\rm i}\gamma_jt$ for further analysis, we need to prove ${\rm i}\gamma_jt$ is small enough. To this end,
in the following we first show $\lim\limits_{m\rightarrow+\infty}\gamma_{j}=0$. For simplicity, we further denote 
\begin{eqnarray*}
&&\tilde{\sigma}_k=\left(\sum_{j=1}^{h}\left(\frac{1}{k}+\delta_{j}^2\right)\lambda_{j}^k\right)^{\frac{1}{k}}, ~~k=1,\cdots,n,\cdots.
\end{eqnarray*}
Notice that $\delta_j=d_j+\frac{b_j}{\lambda_j}$ for $j=1,\cdots,h$ and $\lambda_j=0$ for $j=h+1,\cdots,m$. Then according to some algebraic operations, we have
\begin{eqnarray*}
\tilde{\sigma}_2^2&\leq&\sum_{j=1}^h\left(\frac{1}{2}+\delta_j^2\right)\lambda_{j}^2+\sum_{j=h+1}^mb_j^2=\sum_{j=1}^{m}(\lambda_{j}d_j+b_j)^2+\frac{1}{2}\sum_{j=1}^m\lambda_{j}^2\\
&=&\bm \varsigma_1^{\top}D_1^{\top}D_1\bm \varsigma_1+\frac{1}{2}{\rm tr}\left(\left(\Sigma_1^{\frac{1}{2}}A\Sigma_1^{\frac{1}{2}}\right)^2\right)\\
&=&\left(\Sigma_1^{\frac{1}{2}}A\bm{\mu}_1+\Sigma_1^{\frac{1}{2}}\bm{a}\right)^{\top}\left(\Sigma_1^{\frac{1}{2}}A\bm{\mu}_1+\Sigma_1^{\frac{1}{2}}\bm{a}\right)+\frac{1}{2}{\rm tr}\left(\left(\Sigma_1^{\frac{1}{2}}A\Sigma_1^{\frac{1}{2}}\right)^2\right)\\
&=&(A\bm{\mu}_1+\bm{a})^{\top}\Sigma_1(A\bm{\mu}_1+\bm{a})+\frac{1}{2}{\rm tr}((A\Sigma_1)^2)=\sigma_1^2,
\end{eqnarray*}
where $\bm \varsigma_1=\Lambda_1D_1^{\top}\Sigma_1^{-\frac{1}{2}}\bm{\mu}_1+D_1^{\top}\Sigma_1^{\frac{1}{2}}\bm{a}$. The second equality holds since $d_j$ and $b_j$ are the $j$th element of $D_1^{\top}\Sigma_1^{-\frac{1}{2}}\bm{\mu}_1$ and $D_1^{\top}\Sigma_1^{\frac{1}{2}}\bm{a}$. Now we can reformulate $\gamma_j$ as a product of three terms
\begin{eqnarray*}
&&\gamma_j=\frac{\lambda_j}{\sigma}=\frac{\lambda_j}{\tilde{\sigma}_2}\frac{\tilde{\sigma}_2}{\sigma_1}\frac{\sigma_1}{\sigma},
\end{eqnarray*}
where the second term is obviously bounded. In addition, according to Proposition \ref{moment}, $\sigma^2\geq\sum_{i=1}^{K}\pi_i\sigma^2_i\geq\pi_1\sigma^2_1$, which further implies that $\sigma_1/\sigma\leq1/\sqrt{\pi_1}$ is also bounded. For the first term, since
$\frac{\tilde{\sigma}_4}
{\tilde{\sigma}_2}\geq\left(\frac{1}{4}\right)^{\frac{1}{4}}\frac{|\lambda_{j}|}{\tilde{\sigma}_2}$, according to assumption (\ref{assumption1}), we have
\begin{eqnarray*}
&&0=\lim\limits_{m\rightarrow+\infty}\frac{\tilde{\sigma}_4}
{\tilde{\sigma}_2}\geq\lim\limits_{m\rightarrow+\infty}\left(\frac{1}{4}\right)^{\frac{1}{4}}\frac{|\lambda_{j}|}{\tilde{\sigma}_2}\geq 0,
\end{eqnarray*}
which implies that $\lim\limits_{m\rightarrow+\infty}\frac{\lambda_j}{\tilde{\sigma}_2}=0$. Thus $\lim\limits_{m\rightarrow+\infty}\gamma_j=0$.

Let us return to equation (\ref{P4-2}). For any given $t$, there exists sufficiently large $m$ such that $|{\rm i}\gamma_{j}t|<1$ since $\lim\limits_{m\rightarrow+\infty}\gamma_{j}=0$. According to Taylor's expansion, for $|x|<1$, ${\rm ln}(1-x)=\sum_{n=1}^\infty\frac{-1}{n}x^n$ and $\frac{x}{1-x}=\sum_{n=1}^\infty x^n$. Then for sufficiently large $m$, equation (\ref{P4-2}) can be reformulated as
\begin{eqnarray}\label{prop-e8}
&&{\rm ln}\left(S_h\right)=\frac{1}{2}\sum_{j=1}^{h}\sum_{k=1}^\infty\left(\delta_j^2+\frac{1}{k}\right)({\rm i}\gamma_jt)^k=\frac{1}{2}\sum_{k=1}^\infty\left(\sum_{j=1}^{h}\left(\delta_j^2+\frac{1}{k}\right)\gamma_j^k\right)({\rm i}t)^k.
\end{eqnarray}
Recalling that $\tilde{\sigma}_2\leq\sigma_1$, $\sigma_1\leq\frac{\sigma}{\sqrt{\pi_1}}$ and $\gamma_j=\frac{\lambda_j}{\sigma}$, we have
\begin{eqnarray*}
&&\left|\left(\sum_{j=1}^{h}\left(\delta_j^2+\frac{1}{k}\right)\gamma_j^k\right)^{\frac{1}{k}}\right| =\left|\frac{\tilde{\sigma}_k}{\sigma}\right|\leq
\left|\frac{\tilde{\sigma}_k}{\sqrt{\pi_1}\sigma_1}\right|\leq
\left|\frac{\tilde{\sigma}_k}
{\sqrt{\pi_1}\tilde{\sigma}_2}\right|.
\end{eqnarray*}
According to the assumption (\ref{assumption1}), for any given $t$ and sufficiently small $\epsilon>0$, there exists sufficiently large $m$ irrelavent to $k$ such that $\left|{\rm i}t\left(\sum_{j=1}^{h}\left(\delta_j^2+\frac{1}{k}\right)\gamma_{j}^k\right)^{\frac{1}{k}}\right|<\epsilon$ for each $k\geq3$. Therefore,
\begin{eqnarray*}
&&\left|\sum_{k=3}^{\infty}\left(\sum_{j=1}^{h}\left(\delta_j^2+\frac{1}{k}\right)\gamma_j^k\right)({\rm i}t)^k\right|\leq
\sum_{k=3}^{\infty}\left|\left(\sum_{j=1}^{h}\left(\delta_j^2+\frac{1}{k}\right)\gamma_j^k\right)({\rm i}t)^k\right|
<\sum_{k=3}^{\infty}\epsilon^k=\frac{\epsilon^3}{1-\epsilon},
\end{eqnarray*}
which implies that
\begin{eqnarray}\label{P4-3}
&&\lim\limits_{m\rightarrow+\infty}\sum_{k=3}^{\infty}\left(\sum_{j=1}^{h}\left(\delta_j^2+\frac{1}{k}\right)\gamma_j^k\right)({\rm i}t)^k=0.
\end{eqnarray}

According to equations (\ref{P4-1}) and (\ref{prop-e8}) and the fact that $\sum_{j=1}^{h}\left(\delta_j^2+\frac{1}{k}\right)\gamma_j^k=\frac{\tilde{\sigma}_k^k}{\sigma^k}$, we have
\begin{eqnarray}\label{prop-e7}
{\rm ln}\left(E_{p_1}\left(e^{{\rm i}t\frac{c(\bm{\xi},\bm{x})}{\sigma}}\right)\right)&=&{\rm ln}(S_h)+
{\rm i}q_1t-\frac{1}{2}q_2t^2\nonumber\\
&=&\frac{1}{2}\sum_{k=1}^{\infty}\frac{\tilde{\sigma}_k^k}{\sigma^k}({\rm i}t)^k+{\rm i}q_1t-\frac{1}{2}q_2t^2\nonumber\\
&=&{\rm i}\left(\frac{1}{2}\frac{\tilde{\sigma}_1}{\sigma}+q_1\right)t-\frac{1}{2}\left(\frac{\tilde{\sigma}^2_2}{\sigma^2}+q_2\right)t^2+\frac{1}{2}
\sum_{k=3}^{\infty}\frac{\tilde{\sigma}^k_k}{\sigma^k}({\rm i}t)^k\nonumber\\
&=&{\rm i}\frac{\mu_1}{\sigma}t-\frac{1}{2}\frac{\sigma_1^2}{\sigma^2}t^2+\frac{1}{2}
\sum_{k=3}^{\infty}\frac{\tilde{\sigma}^k_k}{\sigma^k}({\rm i}t)^k,
\end{eqnarray}
where the last equality holds due to
\begin{eqnarray*}
\frac{1}{2}\tilde{\sigma}_1+\sigma q_1
&=&
\frac{1}{2}\sum_{j=1}^{h}(\delta_j^2+1)\lambda_{j}+\sum_{j=h+1}^mb_jd_j+c\\
&=&
\frac{1}{2}\sum_{j=1}^{h}\left(d_j+\frac{b_j}{\lambda_j}\right)^2\lambda_j+\frac{1}{2}\sum_{j=1}^{h}\lambda_{j}+\sum_{j=h+1}^md_jb_j+a-\frac{1}{2}\sum_{j=1}^{h}\frac{b^2_{j}}{\lambda_{j}}\\
&=&\frac{1}{2}\sum_{j=1}^{m}d_j^2\lambda_j+\frac{1}{2}\sum_{j=1}^{m}\lambda_{j}+\sum_{j=1}^md_jb_j+a\\
&=&\frac{1}{2}\bm{\mu}_1^{\top}\Sigma_1^{-\frac{1}{2}}D_1\Lambda D_1^{\top}\Sigma_1^{-\frac{1}{2}}\bm{\mu}_1+\frac{1}{2}{\rm tr}(A\Sigma_1)+\bm{\mu}_1^{\top}\Sigma_1^{-\frac{1}{2}}D_1D_1^{\top}\Sigma_1^{\frac{1}{2}}\bm{a}+a\\
&=&\frac{1}{2}\bm{\mu}_1^{\top}A\bm{\mu}_1+\frac{1}{2}{\rm tr}(A\Sigma_1)+\bm{a}^{\top}\bm{\mu}_1+a\\
&=&\mu_1
\end{eqnarray*}
and  the fact $\tilde{\sigma}_2^2+\sigma^2q_2=\sum_{j=1}^{h}\left(\delta_j^2+\frac{1}{2}\right)\lambda_{j}^2+\sum_{j=h+1}^mb_j^2=\sigma_1^2$.

Combing (\ref{P4-3}) and (\ref{prop-e7}) yields
\begin{eqnarray*}
&&\lim\limits_{m\rightarrow\infty}\left({\rm ln}\left(E_{p_1}\left(e^{{\rm i}t\frac{c(\bm{\xi},\bm{x})}{\sigma}}\right)\right)-\left({\rm i}\frac{\mu_1}{\sigma}t-\frac{1}{2}\frac{\sigma_1^2}{\sigma^2}t^2\right)\right)=0,
\end{eqnarray*}
or equivalently,
\begin{eqnarray*}
&&\lim\limits_{m\rightarrow\infty}\left(E_{p_1}\left(e^{{\rm i}t\frac{c(\bm{\xi},\bm{x})}{\sigma}}\right)-
e^{{\rm i}\frac{\mu_1}{\sigma}t-\frac{1}{2}\frac{\sigma_1^2}{\sigma^2}t^2}\right)=0.
\end{eqnarray*}
Obviously, the above conclusion also holds for $i=2,\cdots,K$. 

Finally, notice that 
$E_{p^*}\left(e^{{\rm i}t\frac{c(\bm{\xi},\bm{x})}{\sigma}}\right)=\sum\limits_{i=1}^K \pi_i E_{p_i}\left(e^{{\rm i}t\frac{c(\bm{\xi},\bm{x})}{\sigma}}\right)$, we have
\begin{eqnarray}\label{proposition2}
&&\lim\limits_{m\rightarrow\infty}\left(E_{p^*}\left(e^{{\rm i}t\frac{c(\bm{\xi},\bm{x})}{\sigma}}\right)-
\sum\limits_{i=1}^K\pi_ie^{{\rm i}\frac{\mu_i}{\sigma}t-\frac{1}{2}\frac{\sigma_i^2}{\sigma^2}t^2}\right)=0.
\end{eqnarray}
The proof is completed.
\end{proof}

As shown in the following corollary, the conditions in Theorem \ref{asy_GMD} are very mild. 
\begin{corollary}\label{cor1}
If for each $i\in\{1,\cdots,K\}$, $h_i\rightarrow+\infty$ as $m\rightarrow+\infty$, and $0<\underline{\lambda}\leq|\lambda_{ij}|\leq\overline{\lambda}<+\infty$, $j=1,\cdots,h_i$, then
equation (\ref{proposition2}) holds.
\end{corollary}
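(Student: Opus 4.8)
The plan is to deduce the corollary directly from Theorem \ref{asy_GMD} by verifying that its hypothesis \eqref{assumption1} holds uniformly in $k\ge 3$ and $i$ under the simpler assumptions stated here. Since $h_i\to+\infty$ is already assumed, everything reduces to controlling the ratio appearing in \eqref{assumption1}; once that ratio is shown to tend to $0$ uniformly in $k$ and $i$, the theorem applies verbatim and yields \eqref{proposition2}.

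First I would fix $i$ and abbreviate $\lambda_j=\lambda_{ij}$, $\delta_j=\delta_{ij}$, $h=h_i$, and then bound the numerator in absolute value. Using $|\lambda_j|\le\overline{\lambda}$ we have $|\lambda_j|^k\le\overline{\lambda}^{\,k-2}\lambda_j^2$ for every $k\ge2$, and since $\tfrac1k+\delta_j^2\le\tfrac12+\delta_j^2$ for $k\ge2$ as well, it follows that
\[
\Big|\sum_{j=1}^{h}\Big(\tfrac1k+\delta_j^2\Big)\lambda_j^k\Big|
\le\sum_{j=1}^{h}\Big(\tfrac1k+\delta_j^2\Big)|\lambda_j|^k
\le\overline{\lambda}^{\,k-2}\sum_{j=1}^{h}\Big(\tfrac12+\delta_j^2\Big)\lambda_j^2
=\overline{\lambda}^{\,k-2}\,\tilde{\sigma}_2^2,
\]
where $\tilde{\sigma}_2^2=\sum_{j=1}^{h}(\tfrac12+\delta_j^2)\lambda_j^2$ is precisely the square of the denominator in \eqref{assumption1}. (The absolute value matters only for odd $k$, where $\lambda_j^k$ may be negative.) Taking $k$-th roots and dividing by $\tilde{\sigma}_2$ gives the clean estimate
\[
\frac{\big|\sum_{j=1}^{h}(\tfrac1k+\delta_j^2)\lambda_j^k\big|^{1/k}}{\tilde{\sigma}_2}
\le\frac{\overline{\lambda}^{\,(k-2)/k}\,\tilde{\sigma}_2^{\,2/k}}{\tilde{\sigma}_2}
=\Big(\frac{\overline{\lambda}}{\tilde{\sigma}_2}\Big)^{(k-2)/k}.
\]

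Next I would produce a lower bound on $\tilde{\sigma}_2$ from $|\lambda_j|\ge\underline{\lambda}$, namely $\tilde{\sigma}_2^2\ge\tfrac12\sum_{j=1}^{h}\lambda_j^2\ge\tfrac12 h\,\underline{\lambda}^2$, so that $\tilde{\sigma}_2\ge\underline{\lambda}\sqrt{h/2}\to+\infty$ as $m\to+\infty$. Hence for $m$ large enough $\overline{\lambda}/\tilde{\sigma}_2<1$, and this is where the uniformity in $k$ is secured: for a base in $(0,1)$ the map $a\mapsto b^{a}$ is decreasing, and $(k-2)/k=1-\tfrac2k\ge\tfrac13$ for all $k\ge3$, so
\[
\Big(\frac{\overline{\lambda}}{\tilde{\sigma}_2}\Big)^{(k-2)/k}\le\Big(\frac{\overline{\lambda}}{\tilde{\sigma}_2}\Big)^{1/3}
\qquad\text{for every }k\ge3.
\]
The right-hand side no longer depends on $k$ and tends to $0$, which gives \eqref{assumption1} uniformly in $k$.

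Finally, uniformity over $i$ is immediate because there are only finitely many components: each ratio is dominated by $(\overline{\lambda}/(\underline{\lambda}\sqrt{h_i/2}))^{1/3}$, and replacing $h_i$ by $\min_i h_i$ produces a single bound tending to $0$ since all $h_i\to+\infty$ and $\underline{\lambda},\overline{\lambda}$ are common constants. Thus \eqref{assumption1} holds uniformly for all $k\ge3$ and all $i$, and Theorem \ref{asy_GMD} yields \eqref{proposition2}. The only real subtlety, and the step I would be most careful about, is the uniformity in $k$: it hinges entirely on the elementary fact that $(k-2)/k$ stays bounded away from $0$, together with the absolute-value bound needed to handle odd $k$ where the inner sum may change sign.
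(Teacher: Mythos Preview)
Your proof is correct and follows the same overall strategy as the paper: both verify condition~\eqref{assumption1} of Theorem~\ref{asy_GMD} by producing a bound of order $h^{-1/6}$ on the ratio, uniform in $k\ge 3$, and then appeal to the theorem.

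The execution differs in a useful way. The paper first argues that $\delta_j^2=(d_j+b_j/\lambda_j)^2$ is trapped between positive constants $l$ and $u$, then bounds numerator and denominator separately to obtain
\[
\frac{(\tfrac1k+u)^{1/k}\,\overline{\lambda}}{(\tfrac12+l)^{1/2}\,\underline{\lambda}}\,h^{1/k-1/2}\le M\,h^{-1/6}.
\]
You instead compare term by term via $|\lambda_j|^k\le\overline{\lambda}^{\,k-2}\lambda_j^2$ and $\tfrac1k+\delta_j^2\le\tfrac12+\delta_j^2$, which collapses the numerator directly against $\tilde{\sigma}_2^2$ and yields $(\overline{\lambda}/\tilde{\sigma}_2)^{(k-2)/k}\le(\overline{\lambda}/\tilde{\sigma}_2)^{1/3}$. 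This is cleaner: it never requires any control on the $\delta_j$'s at all, and in particular sidesteps the paper's claim of a strictly positive lower bound $l\le\delta_j^2$ (which is not obviously true, since $\delta_j$ can vanish). Your handling of the absolute value for odd $k$ and of the uniformity in $k$ via $(k-2)/k\ge 1/3$ is also explicit and correct.
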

\begin{proof}
The conditions in Theorem \ref{asy_GMD} are the same for each $i=1,\cdots, K$. Again, to simplify the analysis, we consider the case for $i=1$ and omit the related subscripts. Notice that $d_j$ and $b_j$ are only determined by parameters of GMD so that they are naturally bounded. Therefore, there exist  finite positive constants $l$ and $u$ such that $l\leq\delta_j^2=\left(d_{j}+\frac{b_{j}}{\lambda_{j}}\right)^2\leq u$ for $j=1,\cdots,h$. Then, for $k\geq3$, we have 
\begin{eqnarray*}
&&\left|\frac{\left(\sum_{j=1}^{h}\left(\frac{1}{k}+\delta_j^2\right)\lambda_{j}^k\right)^{\frac{1}{k}}}
{\left(\sum_{j=1}^{h}\left(\frac{1}{2}+\delta_j^2\right)\lambda^2_{j}\right)^{\frac{1}{2}}}\right|
\leq
\frac{\left((\frac{1}{k}+u)\sum_{j=1}^{h}|\lambda_{j}|^k\right)^{\frac{1}{k}}}
{\left((\frac{1}{2}+l)\sum_{j=1}^{h}\lambda^2_{j}\right)^{\frac{1}{2}}}
\leq
\frac{(\frac{1}{k}+u)^{\frac{1}{k}}\overline{\lambda}}
{(\frac{1}{2}+l)^{\frac{1}{2}}\underline{\lambda}}h^{\frac{1}{k}-\frac{1}{2}}\leq Mh^{-\frac{1}{6}},
\end{eqnarray*}
where $M>\frac{\left(\frac{1}{k}+u\right)^{\frac{1}{k}}\overline{\lambda}}
{\left(\frac{1}{2}+l\right)^{\frac{1}{2}}\underline{\lambda}}$ is a constant irrelevant to $k$. Notice that $\frac{\left(\frac{1}{k}+u\right)^{\frac{1}{k}}\overline{\lambda}}
{\left(\frac{1}{2}+l\right)^{\frac{1}{2}}\underline{\lambda}}$ is bounded for $k\geq3$ so $M$ can be any number larger than this bound. Then for any $\epsilon>0$, there exists $\overline{h}=\left(\frac{M}{\epsilon}\right)^6$ irrelevant to $k$ such that while $ h>\overline{h}$,
\begin{eqnarray*}
&&\left|\frac{\left(\sum_{j=1}^{h}\left(\frac{1}{k}+\delta_j^2\right)\lambda_{j}^k\right)^{\frac{1}{k}}}
{\left(\sum_{j=1}^{h}\left(\frac{1}{2}+\delta_j^2\right)\lambda^2_{j}\right)^{\frac{1}{2}}}\right|<\epsilon,
\end{eqnarray*}
which implies that $\lim\limits_{m\rightarrow+\infty}\frac{\left(\sum_{j=1}^{h}\left(\frac{1}{k}+\delta_j^2\right)\lambda_{j}^k\right)^{\frac{1}{k}}}
{\left(\sum_{j=1}^{h}\left(\frac{1}{2}+\delta_j^2\right)\lambda^2_{j}\right)^{\frac{1}{2}}}=0$ uniformly holds for $k\geq3$ if $h\rightarrow+\infty$ as $m\rightarrow+\infty$. This verifies the conditions in Theorem \ref{asy_GMD}. The proof is completed.
\end{proof}

In plain language, Theorem \ref{asy_GMD} and Corollary \ref{cor1} conclude that if the number of non-zero eigenvalues of $\Sigma_i^{\frac{1}{2}}A\Sigma_i^{\frac{1}{2}}$ for each $i\in\{1,\cdots,K\}$ are sufficiently large, then the distribution of $c(\bm{\xi},\bm{x})$ can be approximated with the following univariate GMD 
\begin{eqnarray}\label{plain}
&&{p}^*(z) = \sum_{i=1}^K\pi_ip(z|\mu_i,\sigma^2_i).
\end{eqnarray}
Recall that $\mu_i$ and $\sigma^2_i$ are the mean and variance of $c(\bm{\xi},\bm{x})$ with respect to probability distribution $p_i$. 

Specially, when $K=1$, the distribution of the random variables turns to a multivariate Gaussian distribution and the asymptotic distribution of $c(\bm{\xi},\bm{x})$ is a Gaussian distribution, which is consistent with the results of \cite{zhu2020}. If $A=O$, then $c(\bm{\xi},\bm{x})$ becomes a linear function of  $\bm{\xi}$ and this case has already been investigated by \cite{hu2021}.

\subsection{Error Analysis of Asymptotic Approximation}

Now we explore the factors that cause errors when using asymptotic distribution (\ref{plain}) as an approximation of the true distribution of $c(\bm{\xi},\bm{x})$ with finite $m$. Let us start from the conditions proposed in Theorem \ref{asy_GMD}. Notice that the conditions are the same in form for each $i\in\{1,\cdots,K\}$ with respect to $p_i$. Thus we only analyze the case when $K=1$ for simplicity, and the results can be parallelly generalized to the case of $K>1$. 

 For $K=1$, by dropping subscript $i$ again, equation (\ref{qua_refor}) can be simplified as
\begin{eqnarray}\label{qua_refor_normal}
&&c(\bm{\xi},\bm{x})=\frac{1}{2}\sum_{j=1}^{h}\lambda_{j}\zeta_{j}^2+\sum_{j=h+1}^mb_{j}\tilde{\xi}_{j}+c,
\end{eqnarray}
which is constituted by three terms: weighted sum of the square of independent Gaussian random variables, weighted sum of independent Gaussian random variables, and a constant. Furthermore, the first term and the second term are independent of each other and the asymptotic distribution of $c(\bm{\xi},\bm{x})$ is the Gaussian distribution. It is clear that the error of asymptotic approximation is due to the first term. Therefore,  we investigate the asymptotic behaviour of the following general weighted sum of non-central Chi-square variables
\begin{eqnarray}\label{sum_chi}
&&Z_h=\sum_{j=1}^h\omega_jz_j^2,
\end{eqnarray}
where for each $j$, $z_j$ is an independent Gaussian random variable with mean $\alpha_j$ and variance 1,  i.e., $z_j\sim\mathcal{N}(\alpha_j,1)$. We provide its convergence rate of the characteristic function of $Z_h$ in the following theorem.
\begin{theorem}\label{con_rate}
Suppose $\{|\omega_j|\}_{j=1}^{h}$ is a non-zero decreasing sequence. If $\lim\limits_{h\rightarrow+\infty}\frac{\omega_1}{\omega_h}\bar{\alpha}_h^{-\frac{1}{6}}h^{-\frac{1}{6}}=0$, where $\bar{\alpha}_h=\frac{1}{2}+\frac{1}{h}\sum_{j=1}^{h}\alpha_j^2$, then 
\begin{eqnarray*}
&&\left|{\rm ln}\left(E\left(e^{{\rm i}t\overline{Z}_h}\right)\right)-{\rm ln}\left(e^{-\frac{1}{2}t^2}\right)\right|=O\left(\frac{\omega^3_1}{\omega^3_h}\bar{\alpha}_{h}^{-\frac{1}{2}}h^{-\frac{1}{2}}\right)
\end{eqnarray*}
for sufficient large $h$, where $\overline{Z}_h=\frac{Z_h-E(Z_h)}{\sqrt{V(Z_h)}}$.
\end{theorem}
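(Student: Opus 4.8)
The plan is to mirror the characteristic-function computation already carried out in the proof of Theorem~\ref{asy_GMD}, but to keep careful track of the size of the remainder rather than merely showing it vanishes. Write $\mu=E(Z_h)$ and $\sigma^2=V(Z_h)$. Since the $z_j$ are independent and each $z_j^2$ is a non-central chi-square variable with characteristic function $(1-2{\rm i}s)^{-1/2}e^{\frac{{\rm i}s}{1-2{\rm i}s}\alpha_j^2}$, I would start from
\[
\ln E\left(e^{{\rm i}t\overline{Z}_h}\right)=-\frac{{\rm i}t\mu}{\sigma}+\sum_{j=1}^h\left[-\frac12\ln\left(1-\frac{2{\rm i}t\omega_j}{\sigma}\right)+\frac{{\rm i}(t/\sigma)\omega_j}{1-2{\rm i}(t/\sigma)\omega_j}\alpha_j^2\right].
\]
Expanding $\ln(1-x)=-\sum_{k\ge1}x^k/k$ and $x/(1-x)=\sum_{k\ge1}x^k$ (legitimate once $|2\omega_j t/\sigma|<1$) collapses the double sum into
\[
\ln E\left(e^{{\rm i}t\overline{Z}_h}\right)=-\frac{{\rm i}t\mu}{\sigma}+\frac12\sum_{k=1}^\infty\frac{S_k}{\sigma^k}({\rm i}t)^k,\qquad S_k:=\sum_{j=1}^h\left(\frac1k+\alpha_j^2\right)(2\omega_j)^k.
\]

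First I would dispose of the low-order terms by the moment identities $S_1=2\sum_j\omega_j(1+\alpha_j^2)=2\mu$ and $S_2=\sum_j(2+4\alpha_j^2)\omega_j^2=\sigma^2$. Then the $k=1$ term cancels the normalizing drift $-{\rm i}t\mu/\sigma$ and the $k=2$ term is exactly $-\tfrac12 t^2$, so that
\[
\ln E\left(e^{{\rm i}t\overline{Z}_h}\right)-\ln\left(e^{-\frac12 t^2}\right)=\frac12\sum_{k=3}^\infty\frac{S_k}{\sigma^k}({\rm i}t)^k,
\]
and the whole problem reduces to bounding this tail.

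The core estimate is a uniform-in-$k$ bound on $|S_k|^{1/k}/\sigma$. Put $U_k=\left(\sum_j(\frac1k+\alpha_j^2)|2\omega_j|^k\right)^{1/k}$, so $|S_k|\le U_k^k$ and $U_2=\sigma$. Using $|2\omega_j|\le 2\omega_1$, $\frac1k+\alpha_j^2\le\frac12+\alpha_j^2$, and $\sum_j(\frac12+\alpha_j^2)=h\bar\alpha_h$ gives $U_k\le 2\omega_1(h\bar\alpha_h)^{1/k}$; on the other hand $\sigma^2=4\sum_j(\frac12+\alpha_j^2)\omega_j^2\ge 4\omega_h^2 h\bar\alpha_h$, so $\sigma\ge 2|\omega_h|\sqrt{h\bar\alpha_h}$. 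Combining, for every $k\ge3$,
\[
\frac{U_k}{\sigma}\le\frac{\omega_1}{\omega_h}(h\bar\alpha_h)^{\frac1k-\frac12}\le\frac{\omega_1}{\omega_h}\bar\alpha_h^{-1/6}h^{-1/6}=:\epsilon_h,
\]
where the last step uses $\frac1k-\frac12\le-\frac16$. By hypothesis $\epsilon_h\to0$, which also justifies the Taylor expansion for large $h$. A geometric-series bound then gives $\left|\frac12\sum_{k\ge3}\frac{S_k}{\sigma^k}({\rm i}t)^k\right|\le\frac12\sum_{k\ge3}(\epsilon_h|t|)^k=\frac{(\epsilon_h|t|)^3}{2(1-\epsilon_h|t|)}$, and since $\epsilon_h|t|\to0$ the leading $k=3$ term dominates, yielding $O(\epsilon_h^3)=O\left(\frac{\omega_1^3}{\omega_h^3}\bar\alpha_h^{-1/2}h^{-1/2}\right)$.

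The main obstacle is the uniformity of $U_k/\sigma\le\epsilon_h$ across all $k\ge3$ at once: the remainder is an infinite series, so it is essential that a single threshold on $h$ makes every term small, which is exactly what the exponent inequality $\frac1k-\frac12\le-\frac16$ secures (the $k=3$ term being the worst). The cube in the final rate is then simply the lowest surviving power $\epsilon_h^3$ in the geometric tail; everything else (the identities $S_1=2\mu$, $S_2=\sigma^2$, and the variance lower bound $\sigma^2\ge4\omega_h^2 h\bar\alpha_h$) is routine algebra of non-central chi-square moments.
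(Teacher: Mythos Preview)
Your argument is correct and follows essentially the same route as the paper: expand the log-characteristic function of $\overline{Z}_h$ via the non-central chi-square characteristic function, identify the $k=1,2$ terms with the standardization, and then control the tail $\sum_{k\ge 3}$ by the uniform bound $U_k/\sigma\le (\omega_1/\omega_h)(h\bar\alpha_h)^{1/k-1/2}\le\epsilon_h$, using $1/k-1/2\le -1/6$ for $k\ge 3$. The paper invokes a formula from \cite{zhu2020} for the cumulant expansion and then separates the $k=3$ term from the $k\ge 4$ tail, whereas you derive the expansion from scratch and bound the whole tail by a geometric series; these are cosmetic differences.

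One point worth noting: the paper goes slightly further and argues that the bound is \emph{sharp}, i.e.\ that the $k=3$ term is genuinely of order $\frac{\omega_1^3}{\omega_h^3}\bar\alpha_h^{-1/2}h^{-1/2}$ (attained when the inequalities become equalities), and hence that the difference is bounded below by $|{\rm i}^3f_3^3t^3|-\left|\sum_{k\ge 4}\cdots\right|$, which is again of that exact order. Your geometric-series estimate only delivers the upper bound. Since the theorem is stated with ``$=O(\cdot)$'' this is all that is formally required, but if the intended reading is $\Theta(\cdot)$ you would need to add the observation that $|S_3|/\sigma^3$ is not merely $\le\epsilon_h^3$ but comparable to it (e.g.\ when all $\omega_j$ have the same sign).
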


\begin{proof}
According to Proposition 2 of \cite{zhu2020}, the logarithm of characteristic function of $\overline{Z}_h$ is
\begin{eqnarray}\label{C2-1}
&&{\rm ln}\left(E\left(e^{{\rm i}t\overline{Z}_h}\right)\right)=-\frac{1}{2}t^2+\frac{1}{2}\sum_{k=3}^{\infty}{\rm i}^kf_k^kt^k,
\end{eqnarray}
where $f_k=\frac{\left(\sum_{j=1}^h(\alpha_j^2+\frac{1}{k})\omega_j^k\right)^{\frac{1}{k}}}{\left(\sum_{j=1}^h(\alpha_j^2+\frac{1}{2})\omega_j^2\right)^{\frac{1}{2}}}$. We have
\begin{eqnarray}\label{f_k_ineq}
|f_k|=\left|\frac{\left(\sum_{j=1}^h(\alpha_j^2+\frac{1}{k})\omega_j^k\right)^{\frac{1}{k}}}{\left(\sum_{j=1}^h(\alpha_j^2+\frac{1}{2})\omega_j^2\right)^{\frac{1}{2}}}\right|&\leq&\frac{\left(\sum_{j=1}^h(\alpha_j^2+\frac{1}{k})|\omega_j|^k\right)^{\frac{1}{k}}}{\left(\sum_{j=1}^h(\alpha_j^2+\frac{1}{2})|\omega_j|^2\right)^{\frac{1}{2}}}\leq\frac{\left(\sum_{j=1}^h(\alpha_j^2+\frac{1}{k})|\omega_1|^k\right)^{\frac{1}{k}}}{\left(\sum_{j=1}^h(\alpha_j^2+\frac{1}{2})|\omega_h|^2\right)^{\frac{1}{2}}}\nonumber\\
&\leq& \left|\frac{\omega_1}{\omega_h}\right|\frac{(\bar{\alpha}_{h}+\frac{1}{k})^{\frac{1}{k}}}{(\bar{\alpha}_{h}+\frac{1}{2})^{\frac{1}{2}}}h^{\frac{1}{k}-\frac{1}{2}}\leq\left|\frac{\omega_1}{\omega_h}\right|\bar{\alpha}_h^{\frac{1}{k}-\frac{1}{2}}h^{\frac{1}{k}-\frac{1}{2}}.
\end{eqnarray}
Denote $g_k=\frac{f_k}{\left|\frac{\omega_1}{\omega_h}\right|\bar{\alpha}_h^{-\frac{1}{6}}h^{-\frac{1}{6}}}$. We further have
\begin{eqnarray}
&&|g_k|\leq \bar{\alpha}_h^{\frac{1}{k}-\frac{1}{3}}h^{\frac{1}{k}-\frac{1}{3}}.\label{C2-2}
\end{eqnarray}
Notice that $\left|\frac{\omega_1}{\omega_h}\right|\geq1$ and $\bar{\alpha}_hh\geq \frac{1}{2}h\geq1$ if $h>2$. Then we have $\bar{\alpha}_h^{\frac{1}{k}-\frac{1}{3}}h^{\frac{1}{k}-\frac{1}{3}}\leq \bar{\alpha}_h^{-\frac{1}{12}}h^{-\frac{1}{12}}$ for any $k\geq4$ and $h>2$, which implies that $g_k$ uniformly converges to 0 for $k\geq4$. 

Furthermore, the assumption $\lim\limits_{h\rightarrow+\infty}\frac{\omega_1}{\omega_h}\bar{\alpha}_h^{-\frac{1}{6}}h^{-\frac{1}{6}}=0$ implies that $\left|\frac{\omega_1}{\omega_h}\right|\bar{\alpha}_h^{-\frac{1}{6}}h^{-\frac{1}{6}}<1$ for sufficient large $h$. Therefore, given $t$, for any $k\geq4$ and any small enough $\epsilon>0$, there exists a sufficiently large $h$ which is irrelevant to $k$ such that
\begin{eqnarray*}
&&\frac{\left|{\rm i}f_kt\right|}{\left|\frac{\omega_1}{\omega_h}\right|\bar{\alpha}_h^{-\frac{1}{6}}h^{-\frac{1}{6}}}<\epsilon \mbox{ and } \frac{\left|{\rm i}^kf_k^kt^k\right|}{\left|\frac{\omega_1}{\omega_h}\right|^3\bar{\alpha}_h^{-\frac{1}{2}}h^{-\frac{1}{2}}}
<\frac{\left|{\rm i}^kf_k^kt^k\right|}{\left|\frac{\omega_1}{\omega_h}\right|^k\bar{\alpha}_h^{-\frac{k}{6}}h^{-\frac{k}{6}}}
<\epsilon^k.
\end{eqnarray*}
Consequently, 
\begin{eqnarray*}
&&\frac{\left|\sum\limits_{k=4}^{\infty}{\rm i}^kf^k_kt^k\right|}{\left|\frac{\omega_1}{\omega_h}\right|^3\bar{\alpha}_h^{-\frac{1}{2}}h^{-\frac{1}{2}}}
<\frac{\sum\limits_{k=4}^{\infty}\left|{\rm i}^kf^k_kt^k\right|}{\left|\frac{\omega_1}{\omega_h}\right|^3\bar{\alpha}_h^{-\frac{1}{2}}h^{-\frac{1}{2}}}
<\sum_{k=4}^{\infty}\epsilon^k
=\frac{\epsilon^4}{1-\epsilon},
\end{eqnarray*}
which implies that $\frac{\left|\sum_{k=4}^{\infty}{\rm i}^kf^k_kt^k\right|}{\left|\frac{\omega_1}{\omega_h}\right|^3\bar{\alpha}_h^{-\frac{1}{2}}h^{-\frac{1}{2}}}$ converges to zero as $h$ increases to infinity, i.e.,
\begin{eqnarray}\label{prop3-1}
&&\left|\sum_{k=4}^{\infty}{\rm i}^kf_k^kt^k\right|=o\left(\frac{\omega_1^3}{\omega_h^3}\bar{\alpha}_h^{-\frac{1}{2}}h^{-\frac{1}{2}}\right).
\end{eqnarray}

For $k=3$, according to (\ref{C2-2}), $|g_3|\leq 1$. It is not difficult to see from (\ref{f_k_ineq}) that the equality $|g_3|= 1$ can be achieved under some special case. Thus we have
\begin{eqnarray}\label{prop3-2}
&&|{\rm i}^3f_3^3t^3|=O\left(\frac{\omega_1^3}{\omega_h^3}\bar{\alpha}_h^{-\frac{1}{2}}h^{-\frac{1}{2}}\right).
\end{eqnarray}
 
 Combing (\ref{C2-1}), (\ref{prop3-1}) and (\ref{prop3-2}) yields
\begin{eqnarray*}
\left|{\rm ln}\left(E\left(e^{it\overline{Z}_h}\right)\right)-{\rm ln}\left(e^{-\frac{1}{2}t^2}\right)\right|=\left|\frac{1}{2}
\sum_{k=3}^{\infty}{\rm i}^kf_k^kt^k\right|&\leq&
\left|\frac{1}{2}{\rm i}^3f_3^3t^3\right|+\left|\frac{1}{2}
\sum_{k=4}^{\infty}{\rm i}^kf_k^kt^k\right|\\
&=&
O\left(\frac{\omega_1^3}{\omega_h^3}\bar{\alpha}_h^{-\frac{1}{2}}h^{-\frac{1}{2}}\right)+o\left(\frac{\omega_1^3}{\omega_h^3}\bar{\alpha}_h^{-\frac{1}{2}}h^{-\frac{1}{2}}\right)\\
&=&
O\left(\frac{\omega_1^3}{\omega_h^3}\bar{\alpha}_h^{-\frac{1}{2}}h^{-\frac{1}{2}}\right).
\end{eqnarray*}
Similarly, we have
\begin{eqnarray*}
\left|{\rm ln}\left(E\left(e^{it\overline{Z}_h}\right)\right)-{\rm ln}\left(e^{-\frac{1}{2}t^2}\right)\right|&\geq&
\left|\frac{1}{2}{\rm i}^3f_3^3t^3\right|-\left|\frac{1}{2}
\sum_{k=4}^{\infty}{\rm i}^kf_k^kt^k\right|
=
O\left(\frac{\omega_1^3}{\omega_h^3}\bar{\alpha}_h^{-\frac{1}{2}}h^{-\frac{1}{2}}\right).
\end{eqnarray*}The proof is completed.
\end{proof}

Theorem \ref{con_rate} indicates that, for given $\bm\omega$ and $\bm\alpha$, the distribution of the weighted sum of non-central Chi-square random variables converges to a Gaussian distribution as the number of random variables increases to infinity, and the convergence rate is $O(1/\sqrt{h})$. It also says that the difference between the characteristic functions of $\overline{Z}_h$ and its asymptotic distribution also depends on the weight vector $\bm\omega$ and the mean vector $\bm\alpha$. The smaller the differences among the absolute values of weights $\omega_i$'s and the larger the absolute values of means $\alpha_i$'s, the faster the convergence. Since this conclusion holds for each probability distribution $p_i$, these results remain true for the case of GMD with $K>1$.

\subsection{Error Reduction of Asymptotic Approximation}

There are two types of errors arising from the asymptotic approximation: {\it underlying fitness error} and {\it asymptotic approximation error}. Specifically, the underlying fitness error is the one generated by the approximation of the distribution of $\bm \xi$ with a GMD, while the asymptotic approximation error denotes the one caused by the approximation of the distribution of $c(\bm{\xi},\bm{x})$ with the associated asymptotic distribution based on the underlying GMD. 

Comparing equations (\ref{qua_refor_normal}) and (\ref{sum_chi}) yields
\begin{eqnarray*}
&&\alpha_j \sim d_j+\frac{b_j}{\lambda_j} \mbox{ and } \omega_j\sim\lambda_j,~ j=1,\cdots,h,
\end{eqnarray*}
where $d_j$ and $b_j$ are the $j$th elements of $D^{\top}\Sigma^{-\frac{1}{2}}\bm{\mu}$ and $D^{\top}\Sigma^{\frac{1}{2}}\bm{a}$. By Theorem \ref{con_rate}, the larger the absolute values of $\left(d_j+\frac{b_j}{\lambda_j}\right)$'s and the smaller the ratio of the largest to the smallest absolute value of non-zero eigenvalues of matrix $\Sigma^{\frac{1}{2}}A\Sigma^{\frac{1}{2}}$, the smaller the asymptotic approximation error.
Essentially, the error is determined by distribution parameters $\bm\mu$ and $\Sigma$ of $\bm\xi$,  $A$ and $\bm a$. Given the decision vector $\bm x$, $A$ and $\bm{a}$ are two constants, and the error is only determined by parameters $\bm{\mu}$ and $\Sigma$. Thus we mainly discuss why and how one can reduce the error of asymptotic approximation by properly setting parameters $\bm\mu$ and $\Sigma$ in the sequel.         

Suppose $\bm \mu=\bm 0$ and $\bm a=\bm 0$. Then $d_j+\frac{b_j}{\lambda_j}=0~(i=1,\cdots,h)$, which implies the lowest convergence rate associated with $\bm\alpha$ by Theorem \ref{con_rate}. Thus a larger deviation between the mean vector $\bm{\mu}$ and the origin is more likely to lead to faster convergence. However, given different values of $A$ and $\bm{a}$, $\bm{\mu}$ has different effects on the asymptotic approximation error. Therefore, error reduction may not be achieved by manipulating the mean vector $\bm{\mu}$, and we focus on the discussion of $\Sigma$ in the following. 

Since the number of non-zero eigenvalues of matrix $\Sigma^{\frac{1}{2}}A\Sigma^{\frac{1}{2}}$ instead of its singularity affects our analysis, for simplicity we assume that $\Sigma^{\frac{1}{2}}A\Sigma^{\frac{1}{2}}$ is non-singular, or equivalently, both $A$ and $\Sigma$ are non-singular, in the following analysis. As to the ratio of the maximum to the minimum of the absolute values of non-zero eigenvalues of matrix $\Sigma^{\frac{1}{2}}A\Sigma^{\frac{1}{2}}$,  we have the following proposition where we denote by $|\lambda_j(\cdot)|$ the $j$th largest absolute value of eigenvalues of an $m$-dimensional non-singular matrix for simplicity.

\begin{proposition}\label{con_num}
Suppose both $A$ and $\Sigma$ are non-singular. Then
\begin{eqnarray*}
&&
\left|\frac{\lambda_1(\Sigma^{\frac{1}{2}}A\Sigma^{\frac{1}{2}})}{\lambda_m(\Sigma^{\frac{1}{2}}A\Sigma^{\frac{1}{2}})}\right|\leq
\left|\frac{\lambda_1(A)}{\lambda_m(A)}\right|\frac{\lambda_1(\Sigma)}{\lambda_m(\Sigma)}.
\end{eqnarray*}
\end{proposition}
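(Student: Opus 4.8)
The plan is to recast the eigenvalue ratio as the spectral-norm condition number of the symmetric matrix $M:=\Sigma^{\frac{1}{2}}A\Sigma^{\frac{1}{2}}$ and then split that condition number multiplicatively using submultiplicativity of the operator norm. The first step is to observe that $M$ is symmetric, since $A$ and $\Sigma^{\frac{1}{2}}$ both are; hence all eigenvalues of $M$ are real and, writing $\|\cdot\|_2$ for the spectral (operator) norm, one has $|\lambda_1(M)|=\|M\|_2$ and $|\lambda_m(M)|=1/\|M^{-1}\|_2$. Consequently the quantity to be bounded is precisely the condition number $\|M\|_2\,\|M^{-1}\|_2$.

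Next I would use $M^{-1}=\Sigma^{-\frac{1}{2}}A^{-1}\Sigma^{-\frac{1}{2}}$, which is legitimate because both $A$ and $\Sigma$ are assumed non-singular. Applying submultiplicativity of the spectral norm to the two triple products gives
\[
\|M\|_2\leq\|\Sigma^{\frac{1}{2}}\|_2^2\,\|A\|_2,\qquad \|M^{-1}\|_2\leq\|\Sigma^{-\frac{1}{2}}\|_2^2\,\|A^{-1}\|_2.
\]
Since $\Sigma$ is symmetric positive definite, $\|\Sigma^{\frac{1}{2}}\|_2^2=\lambda_1(\Sigma)=\|\Sigma\|_2$ and $\|\Sigma^{-\frac{1}{2}}\|_2^2=\|\Sigma^{-1}\|_2$. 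Multiplying the two displayed bounds and regrouping the four factors yields
\[
\|M\|_2\,\|M^{-1}\|_2\leq\big(\|A\|_2\,\|A^{-1}\|_2\big)\big(\|\Sigma\|_2\,\|\Sigma^{-1}\|_2\big).
\]

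Finally I would identify each parenthesised product with the corresponding eigenvalue ratio: for the symmetric matrix $A$ we have $\|A\|_2=|\lambda_1(A)|$ and $\|A^{-1}\|_2=1/|\lambda_m(A)|$, so that $\|A\|_2\,\|A^{-1}\|_2=|\lambda_1(A)/\lambda_m(A)|$; and because $\Sigma$ has positive eigenvalues, $\|\Sigma\|_2\,\|\Sigma^{-1}\|_2=\lambda_1(\Sigma)/\lambda_m(\Sigma)$. Substituting these into the last display gives exactly the claimed inequality. There is no genuinely hard step here; the only point demanding care is the identification $|\lambda_m(M)|=1/\|M^{-1}\|_2$, which rests crucially on the symmetry of $M$. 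Were $M$ not symmetric, the spectral norm would equal the largest \emph{singular} value rather than the largest absolute eigenvalue, and one would need a singular-value detour (or a perturbation bound such as Bauer--Fike) instead of the clean identity used above.
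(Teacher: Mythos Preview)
Your argument is correct. The route, however, differs from the paper's. The paper invokes Ostrowski's congruence inequality (Theorem 8.1.17 in Golub and Van Loan), which bounds every eigenvalue individually:
\[
|\lambda_i(A)|\,\lambda_m(\Sigma)\;\le\;|\lambda_i(\Sigma^{1/2}A\Sigma^{1/2})|\;\le\;|\lambda_i(A)|\,\lambda_1(\Sigma),\qquad i=1,\dots,m,
\]
and then simply divides the $i=1$ upper bound by the $i=m$ lower bound. Your approach instead recognises the left-hand side as the spectral condition number $\kappa_2(M)=\|M\|_2\|M^{-1}\|_2$ and splits it via submultiplicativity of the operator norm. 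What you gain is self-containment: nothing beyond the definition of the spectral norm and its submultiplicativity is used, whereas the paper relies on a cited external result. What the paper's approach buys is a sharper intermediate statement---per-eigenvalue control rather than only control of the extremes---though for the proposition as stated this extra strength is not needed. Your remark about the symmetry of $M$ being essential for the identity $|\lambda_m(M)|=1/\|M^{-1}\|_2$ is well placed; without it one would be bounding a ratio of singular values rather than eigenvalues.
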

\begin{proof}
Notice that $\Sigma^{\frac{1}{2}}$ is a definite matrix satisfying $\Sigma=\Sigma^{\frac{1}{2}}\Sigma^{\frac{1}{2}}$. Referring to Theorem 8.1.17 of \cite{golub2013}, we have
\begin{eqnarray*}
&&0<|\lambda_i(A)\lambda_m(\Sigma)|\leq|\lambda_i(\Sigma^{\frac{1}{2}}A\Sigma^{\frac{1}{2}})|\leq|\lambda_i(A)\lambda_1(\Sigma)|
\end{eqnarray*}
for $i=1,\cdots,m$, which indicates that
\begin{eqnarray*}
&&\left|\frac{\lambda_1(\Sigma^{\frac{1}{2}}A\Sigma^{\frac{1}{2}})}{\lambda_m(\Sigma^{\frac{1}{2}}A\Sigma^{\frac{1}{2}})}\right|\leq
\left|\frac{\lambda_1(A)\lambda_1(\Sigma)}{\lambda_m(A)\lambda_m(\Sigma)}\right|=
\left|\frac{\lambda_1(A)}{\lambda_m(A)}\right|\frac{\lambda_1(\Sigma)}{\lambda_m(\Sigma)}.
\end{eqnarray*}
The proof is completed.
\end{proof}

By comparison of equation (\ref{sum_chi}) and the first term of equation (\ref{qua_refor_normal}), Theorem \ref{con_rate} and Proposition \ref{con_num} imply that when the ratio of the largest to the smallest absolute value of eigenvalues of matrix $A$ and/or $\Sigma$ are relatively small, the distribution of $c(\bm \xi,\bm x)$ can converge to the associated asymptotic distribution relatively easily.

An intuition derived from the aforementioned analysis is that we can constrain $\frac{\lambda_1(\Sigma_i)}{\lambda_m(\Sigma_i)}$'s, the condition numbers of covariance matrices of Gaussian distributions that constitute the GMD to reduce asymptotic approximation error. Remind that our original motivation for selecting GMD is because it has strong fitting capacity to the real distribution of $\bm\xi$ which can reduce the underlying fitness error. We show in the following that this purpose can also be achieved even if the condition numbers of covariance matrices are restricted.

\begin{theorem}\label{fitness}
Suppose $f(\bm{z}):\mathbb{R}^m\rightarrow\mathbb{R}$ is a non-negative Riemann integrable function that satisfies $\int_{\mathbb{R}^m}f(\bm{z})d\bm{z}=1$. Then for any given $\delta>0$ and $q\geq1$, there exists a Gaussian mixture density function
\begin{eqnarray}
&&p^*(\bm{z})=\sum_{i=1}^{K}\pi_ip(\bm{z}|\bm{\mu}_i,\Sigma_i),
\end{eqnarray}
where $\pi_i>0$, $i=1,\cdots,K$, and $\frac{\lambda_1(\Sigma_i)}{\lambda_m(\Sigma_i)}\leq q$, $i=1,\cdots,K$, such that
\begin{eqnarray*}
&&\int_{\mathbb{R}^m}|f(\bm{z})-p^*(\bm{z})|d\bm{z}<\delta.
\end{eqnarray*}
\end{theorem}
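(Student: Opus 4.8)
The plan is to construct the approximating mixture by mollifying $f$ with a narrow isotropic Gaussian and then discretizing the resulting convolution into a finite sum. Because the kernel will be isotropic, every component covariance is a scalar multiple of the identity and so has condition number exactly $1\le q$; the constraint $\frac{\lambda_1(\Sigma_i)}{\lambda_m(\Sigma_i)}\le q$ is therefore met automatically and the value of $q$ plays no essential role.

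First I would mollify. Let $g_\epsilon(\bm z)=p(\bm z\mid \bm 0,\epsilon^2 I)$ with $I$ the identity, and set $f_\epsilon=f*g_\epsilon$. Since $f$ is non-negative, Riemann integrable, and integrates to one, it lies in $L^1(\mathbb{R}^m)$, and $\{g_\epsilon\}$ is an approximate identity (non-negative, unit mass, concentrating at the origin as $\epsilon\to0^+$). Standard approximate-identity theory then gives $\|f_\epsilon-f\|_{L^1}\to0$, so I fix $\epsilon$ with $\|f_\epsilon-f\|_{L^1}<\delta/2$. The key observation is that $f_\epsilon(\bm z)=\int_{\mathbb{R}^m}f(\bm y)\,p(\bm z\mid \bm y,\epsilon^2 I)\,d\bm y$ is already a continuous mixture of isotropic Gaussians, with mixing measure $f(\bm y)\,d\bm y$ and component means $\bm y$.

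Next I would discretize. Choose $R$ so large that the tail mass $\int_{\mathbb{R}^m\setminus[-R,R]^m}f<\delta/8$; since each Gaussian has unit mass in $\bm z$, restricting the integration domain to the box $[-R,R]^m$ perturbs $f_\epsilon$ by less than $\delta/8$ in $L^1$. Partition the box into cubes $Q_k$ with centers $\bm y_k$ and common volume $\Delta$, and form the finite sum $\hat p(\bm z)=\sum_k f(\bm y_k)\,\Delta\, p(\bm z\mid \bm y_k,\epsilon^2 I)$. To bound the distance from the box-restricted convolution I would split the integrand over each $Q_k$ as $f(\bm y_k)\bigl[p(\bm z\mid \bm y_k)-p(\bm z\mid \bm y)\bigr]+\bigl[f(\bm y_k)-f(\bm y)\bigr]p(\bm z\mid \bm y)$. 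The first piece is controlled by the $L^1$-continuity of $\bm y\mapsto p(\cdot\mid \bm y,\epsilon^2 I)$ (uniform continuity of translation for the fixed smooth kernel) together with the boundedness of $f$ on the box; the second piece integrates in $\bm z$ to $\int_{Q_k}|f(\bm y_k)-f(\bm y)|\,d\bm y$, whose sum over $k$ is the $L^1$ distance between $f$ and its step-function approximation on the box and hence tends to $0$ as the mesh shrinks, precisely because $f$ is Riemann integrable. A fine enough mesh makes the total discretization error below $\delta/8$.

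Finally I would normalize. The weights $w_k=f(\bm y_k)\Delta\ge0$; discarding the vanishing ones and setting $W=\sum_k w_k$, note that $W$ is a Riemann sum for $\int_{[-R,R]^m}f$ and so lies within $\delta/8$ of $1$ for a fine mesh. Putting $\pi_k=w_k/W$ yields a genuine density $p^*(\bm z)=\sum_k\pi_k\,p(\bm z\mid \bm y_k,\epsilon^2 I)$ with $\pi_k>0$ and $\sum_k\pi_k=1$, and a direct computation gives $\|\hat p-p^*\|_{L^1}=|W-1|<\delta/8$. Assembling the mollification, truncation, discretization, and normalization estimates through the triangle inequality yields $\int_{\mathbb{R}^m}|f-p^*|<\delta$, while every covariance equals $\epsilon^2 I$, whose condition number $1$ satisfies $1\le q$. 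The main obstacle is the discretization step: one must simultaneously exploit the Riemann integrability of $f$ (to kill the step-function error) and the translation-continuity of the Gaussian kernel family (to kill the mean-perturbation error), keeping all constants uniform across the partition, and the improper integral over $\mathbb{R}^m$ forces the extra truncation bookkeeping.
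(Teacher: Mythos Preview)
Your argument is correct, and it reaches the conclusion by a genuinely different and in several ways cleaner route than the paper. The paper first replaces $f$ by a step function $\sum_i \underline{f}_i\mathbf{1}_{\mathscr{I}_i}$, then convolves each indicator with a Gaussian whose covariance has eigenvalues confined to an interval $(\bar\lambda/q,\bar\lambda)$ (so that the ratio stays below $q$), and finally discretizes each convolution via a separate uniform-convergence lemma. You instead mollify $f$ directly by an isotropic kernel $p(\cdot\mid\bm 0,\epsilon^2 I)$, invoke the standard $L^1$ approximate-identity theorem, and discretize the single resulting integral; the condition-number bound is satisfied trivially because every component has covariance $\epsilon^2 I$ and hence condition number $1\le q$. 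Your observation that isotropic kernels already suffice renders the parameter $q$ inert for the purposes of the theorem, which is a nice simplification. The paper's detour through step functions and variable covariances is more hands-on and shows explicitly where anisotropic $\Sigma_i$ could enter, but for the statement as written your approach is shorter, relies only on textbook $L^1$ facts (translation-continuity of the kernel, Darboux control of the Riemann error), and avoids the auxiliary lemma in Appendix~C altogether.
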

\begin{proof}
According to the assumption $\int_{\mathbb{R}^m}f(\bm{z})d\bm{z}=1$, $f(\bm{z})$ can be approximated by the finite sum of some simple functions:
\begin{eqnarray*}
&&f_N(\bm{z})=\sum_{i=1}^N\underline{f}_i\mathbf{1}_{\mathscr{I}_i},
\end{eqnarray*}
where $\mathscr{I}_i=\prod_{j=1}^m(a_{ij},b_{ij}]$ $(b_{ij}>a_{ij})$, $i=1,\cdots,N$, are mutually disjoint cubes, $\underline{f}_i=\inf\{f(\bm{z}):\bm{z}\in \mathscr{I}_i\}$ and $\mathbf{1}_{\mathscr{I}_i}$ is the indicator function. Without considering those small cubes containing points whose function values are equal to zeros, we further assume $\underline{f}_i>0$. Here, $N$ is chosen to be sufficiently large such that
\begin{eqnarray}\label{P5-1}
&&\int_{\mathbb{R}^m}|f(\bm{z})-f_N(\bm{z})|d\bm{z}<\frac{\delta}{2}.
\end{eqnarray}

In the following, we show that given the number of cubes $N$ and the partition $\{\mathscr{I}_i\}_{i=1}^{N}$, $f_N(\bm{z})$ can be approximated by a Gaussian mixture density function with an error smaller than $\frac{\delta}{2}$. And this is done by approximating each simple function $\underline{f}_i\mathbf{1}_{\mathscr{I}_i}$ with a convolution, and then approximating the convolution with a Gaussian mixture density function. 

For simplicity, we denote the simple function as 
\begin{eqnarray*}
&&\chi_{\mathscr{I}_i}(\bm{z})=\underline{f}_i\mathbf{1}_{\mathscr{I}_i}(\bm{z})=\left\{\begin{array}{cc}\underline{f}_i&\bm{z}\in \mathscr{I}_i\\0&{\rm otherwise}\\ \end{array}\right.
\end{eqnarray*}
and a Gaussian density function with zero mean vector and covariance matrix $\Sigma_i$ as $p(\bm{z}|\bm{0},\Sigma_i)=\frac{1}{(2\pi)^{\frac{m}{2}}|\Sigma_i|^{\frac{1}{2}}}e^{-\frac{1}{2}\bm{z}^{\top}\Sigma_i^{-1}\bm{z}}$.
Consider the convolution of $\chi_{\mathscr{I}_i}(\cdot)$ with $p(\cdot|\bm{0},\Sigma_i)$ which is defined by
\begin{eqnarray}\label{P5-8}
&&\chi_{\mathscr{I}_i}\ast p(\bm{z}|\bm{0},\Sigma_i)=\int_{\mathscr{I}_i}\chi_{\mathscr{I}_i}(\bm{y})p(\bm{z}-\bm{y}|\bm{0},\Sigma_i)d\bm{y}=
\underline{f}_i\int_{\mathscr{I}_i}p(\bm{z}-\bm{y}|\bm{0},\Sigma_i)d\bm{y}.
\end{eqnarray}
There are two properties of this convolution. First, 
\begin{eqnarray}\label{P5-2}
&&\int_{\mathbb{R}^m}\chi_{\mathscr{I}_i}\ast p(\bm{z}|\bm{0},\Sigma_i)d\bm{z}=
\int_{\mathscr{I}_i}\int_{\mathbb{R}^m}\underline{f}_ip(\bm{z}-\bm{y}|\bm{0},\Sigma_i)d\bm{z}d\bm{y}=\int_{\mathscr{I}_i}\underline{f}_id\bm{y}=|\mathscr{I}_i|\underline{f}_i, 
\end{eqnarray}
where $|\mathscr{I}_i|$ is the volume of $\mathscr{I}_i$. The order of integration in (\ref{P5-2}) is reversed since the integral domains of $\bm{z}$ and $\bm{y}$ are irrelevant. Second, for any $\bm{z}\in \mathscr{I}_i$, 
\begin{eqnarray}\label{P5-3}
&&\chi_{\mathscr{I}_i}(\bm{z})=\underline{f}_i\geq\chi_{\mathscr{I}_i}\ast p(\bm{z}|\bm{0},\Sigma_i).
\end{eqnarray}
Combing with (\ref{P5-2})-(\ref{P5-3}), we can derive the integral absolute error between the simple function and the corresponding convolution
\begin{eqnarray}\label{P5-6}
&&\int_{\mathbb{R}^m}|\chi_{\mathscr{I}_i}(\bm{z})-\chi_{\mathscr{I}_i}\ast p(\bm{z}|\bm{0},\Sigma_i)|d\bm{z}\nonumber\\
&&=\int_{{\mathscr{I}_i}}\left(\chi_{\mathscr{I}_i}(\bm{z})-\chi_{\mathscr{I}_i}\ast p(\bm{z}|\bm{0},\Sigma_i)\right)d\bm{z}+\int_{\mathbb{R}^m\backslash{\mathscr{I}_i}}\chi_{\mathscr{I}_i}\ast p(\bm{z}|\bm{0},\Sigma_i)d\bm{z}\nonumber\\
&&=\int_{{\mathscr{I}_i}}\chi_{\mathscr{I}_i}(\bm{z})d\bm{z}+\int_{\mathbb{R}^m}\chi_{\mathscr{I}_i}\ast  p(\bm{z}|\bm{0},\Sigma_i)d\bm{z}-2\int_{{\mathscr{I}_i}}\chi_{\mathscr{I}_i}\ast  p(\bm{z}|\bm{0},\Sigma_i)d\bm{z}\nonumber\\
&&=2\underline{f}_i|\mathscr{I}_i|-2\int_{{\mathscr{I}_i}}\chi_{\mathscr{I}_i}\ast  p(\bm{z}|\bm{0},\Sigma_i)d\bm{z}=2\underline{f}_i|\mathscr{I}_i|-2\int_{{\rm int}{\mathscr{I}_i}}\chi_{\mathscr{I}_i}\ast  p(\bm{z}|\bm{0},\Sigma_i)d\bm{z},
\end{eqnarray}
where ${\rm int}\mathscr{I}_i=\prod_{j=1}^m(a_{ij},b_{ij})$ is the set of interior point with respect to $\mathscr{I}_i$. This absolute error depends on the integral of the convolution on $\mathscr{I}_i$. In the following we show the condition on which the integral absolute error converges to zero, or equivalently, the integral of the convolution converges to $\underline{f}_i|\mathscr{I}_i|$.

Recalling (\ref{P5-8}), the convolution can be rewritten as
\begin{eqnarray*}
\chi_{\mathscr{I}_i}\ast p(\bm{z}|\bm{0},\Sigma_i)&=&\underline{f}_i\int_{\mathscr{I}_i}\frac{1}{(2\pi)^{\frac{m}{2}}|\Sigma_i|^{\frac{1}{2}}}
e^{-\frac{1}{2}(\bm{y}-\bm{z})^{\top}\Sigma_i^{-1}(\bm{y}-\bm{z})}d\bm{y}\\
&=&
\underline{f}_i\int_{\mathscr{D}_{\bm{z}}}\frac{1}{(2\pi)^{\frac{m}{2}}|\Sigma_i|^{\frac{1}{2}}}
e^{-\frac{1}{2}\bm{x}^{\top}\Sigma_i^{-1}\bm{x}}|J_{\bm{y},\bm{x}}|d\bm{x}\\
&=&
\underline{f}_i\int_{\mathscr{D}_{\bm{z}}}\frac{1}{(2\pi)^{\frac{m}{2}}|\Sigma_i|^{\frac{1}{2}}}
e^{-\frac{1}{2}\bm{x}^{\top}\Sigma_i^{-1}\bm{x}}d\bm{x},
\end{eqnarray*}
where $\bm{x}=\bm{y}-\bm{z}$, $J_{\bm{y},\bm{x}}$ is the corresponding Jacobi matrix, i.e., the derivative matrix of $\bm{y}$ with respect to $\bm{x}$, which is equal to the identity matrix here, and $\mathscr{D}_{\bm{z}}=\{\bm{x}:\bm{x}+\bm{z}\in\mathscr{I}_i\}$. Decompose $\Sigma_i$ as $\Sigma_i=Q\Lambda Q^{\top}$, where $\Lambda$ is a diagonal matrix with entries $\lambda_1,\cdots,\lambda_m$ and $Q$ is an orthogonal matrix, and further denote $\mathscr{F}=\{\bm{u}:\bm{u}=Q^{\top}\bm{x},\bm{x}\in\mathscr{D}_z\}$. For any $\bm{z}\in{\rm int}\mathscr{I}_i$, we have that $\bm{0}\in{\rm int}\mathscr{D}_{\bm{z}}$. Note that $\mathscr{F}$ is the orthogonal transformation of $\mathscr{D}_{\bm{z}}$. Thus $\bm{0}\in{\rm int}\mathscr{F}$. Therefore, there exists a subset $\mathscr{F}_1\subseteq\mathscr{F}$ and $\mathscr{F}_1=\prod_{i=1}^m(-\tau_i,\tau_i]$ with sufficiently small $\tau_i>0$, $i=1,\cdots,m$. Then we have that for any $\bm{z}\in{\rm int}\mathscr{I}_i$,
\begin{eqnarray*}
\chi_{\mathscr{I}_i}\ast p(\bm{z}|\bm{0},\Sigma)&=&\underline{f}_i\int_{\mathscr{D}_{\bm{z}}}\frac{1}{(2\pi)^{\frac{m}{2}}|\Sigma_i|^{\frac{1}{2}}}
e^{-\frac{1}{2}\bm{x}^{\top}\Sigma_i^{-1}\bm{x}}d\bm{x}\\
&=&\underline{f}_i\int_{\mathscr{F}}\frac{1}{(2\pi)^{\frac{m}{2}}|\Lambda|^{\frac{1}{2}}}
e^{-\frac{1}{2}\bm{u}^{\top}\Lambda^{-1}\bm{u}}|J_{\bm{x},\bm{u}}|d\bm{u}\\
&\geq&\underline{f}_i\int_{\mathscr{F}_1}\frac{1}{(2\pi)^{\frac{m}{2}}|\Lambda|^{\frac{1}{2}}}
e^{-\frac{1}{2}\bm{u}^{\top}\Lambda^{-1}\bm{u}}d\bm{u}\\
&=&\underline{f}_i\prod_{i=1}^m\int_{-\tau_i}^{\tau_i}\frac{1}{\sqrt{2\pi\lambda_i}}
e^{-\frac{1}{2\lambda_i}u_i^2}du_i\\
&=&\underline{f}_i\prod_{i=1}^m\int_{-\frac{\tau_i}{\sqrt{\lambda_i}}}^{\frac{\tau_i}{\sqrt{\lambda_i}}}\frac{1}{\sqrt{2\pi}}
e^{-\frac{1}{2}u_i^2}du_i,
\end{eqnarray*}
where $\bm{u}=Q^{\top}\bm{x}$, $J_{\bm{x},\bm{u}}=Q$ and $|J_{\bm{x},\bm{u}}|=|Q|=1$. Given $\tau_i$, $i=1,\cdots,m$,  if all the eigenvalues $\lambda_i$, $i=1,\cdots,m$, are small enough, then $\chi_{\mathscr{I}_i}\ast p(\bm{z}|\bm{0},\Sigma)$ converges to $\underline{f}_i$. Therefore, there exists a sufficiently small $\bar{\lambda}>0$ and a covariance matrix $\Sigma_i$ such that if
\begin{eqnarray*}\label{P5-7}
&&\bar\lambda>\lambda_i>\frac{\bar\lambda}{q},~i=1,\cdots,m, 
\end{eqnarray*}
then
\begin{eqnarray*}
&&\chi_{\mathscr{I}_i}\ast p(\bm{z}|\bm{0},\Sigma_i)\geq\underline{f}_i\left(1-\frac{\delta}{8N|\mathscr{I}_i|\underline{f}_i}\right).
\end{eqnarray*}
Note that the above inequality holds for $\bm{z}\in{\rm int}\mathscr{I}_i$. Integrating both side of the above inequality on ${\rm int}\mathscr{I}_i$ and combining with (\ref{P5-2}) we have
\begin{eqnarray*}
&&|\mathscr{I}_i|\underline{f}_i\geq\int_{{\rm int}\mathscr{I}_i}\chi_{\mathscr{I}_i}\ast p(\bm{z}|\bm{0},\Sigma_i)d\bm{z}\geq|\mathscr{I}_i|\underline{f}_i\left(1-\frac{\delta}{8N|\mathscr{I}_i|\underline{f}_i}\right).
\end{eqnarray*}
Combining the above inequality with (\ref{P5-6}) further derives
\begin{eqnarray}\label{P5-4}
&&\int_{\mathbb{R}^m}|\chi_{\mathscr{I}_i}(\bm{z})-\chi_{\mathscr{I}_i}\ast p(\bm{z}|\bm{0},\Sigma_i)|d\bm{z}=2\underline{f}_i|\mathscr{I}_i|-2\int_{{\rm int}{\mathscr{I}_i}}\chi_{\mathscr{I}_i}\ast  p(\bm{z}|\bm{0},\Sigma_i)d\bm{z}<\frac{\delta}{4N}.
\end{eqnarray}

We have proved that any simple function can be approximated by a corresponding convolution function. In the following we prove that the convolution function can be approximated by a Gaussian mixture density function.

According to (\ref{P5-2}), the convolution $\chi_{\mathscr{I}_i}\ast p(\bm{z}|\bm{0},\Sigma_i)$ is integrable in $\mathbb{R}^m$. Therefore, there exists a compact set $\mathscr{B}$ with sufficiently large volume such that
\begin{eqnarray*}
&&\int_{\mathbb{R}^m\backslash\mathscr{B}}\chi_{\mathscr{I}_i}\ast p(\bm{z}|\bm{0},\Sigma_i)d\bm{z}<\frac{\delta}{16N}.
\end{eqnarray*}
According to Lemma \ref{fitness_lemma} in the Appendix C, given $\mathscr{B}$, there exists a partition $\mathscr{I}_i=\bigcup_{j=1}^{n_i}\mathscr{I}_{ij}$ and a function
\begin{eqnarray*}
&&\chi_{\mathscr{I}_i}\ast p_{n_i}(\bm{z})=\sum_{j=1}^{n_i}|\mathscr{I}_{ij}|\chi_{\mathscr{I}_i}(\bm{y}_{ij})p(\bm{z}-\bm{y}_{ij}|\bm{0},\Sigma_i)=\sum_{j=1}^{n_i}\underline{f}_i|\mathscr{I}_{ij}|p(\bm{z}-\bm{y}_{ij}|\bm{0},\Sigma_i),
\end{eqnarray*}
where $\bm{y}_{ij}\in \mathscr{I}_{ij}$, such that
\begin{eqnarray*}
&&\int_{\mathscr{B}}\left|\chi_{\mathscr{I}_i}\ast p(\bm{z}|\bm{0},\Sigma_i)-\chi_{\mathscr{I}_i}\ast p_{n_i}(\bm{z})\right|d\bm{z}<\frac{\delta}{16N}.
\end{eqnarray*}
In addition, the above inequality also implies that 
\begin{eqnarray*}
\int_{\mathbb{R}^m\backslash\mathscr{B}}\chi_{\mathscr{I}_i}\ast p_{n_i}(\bm{z})d\bm{z}&=&1-\int_{\mathscr{B}}\chi_{\mathscr{I}_i}\ast p_{n_i}(\bm{z})d\bm{z}\\
&<&1-\left(\int_{\mathscr{B}}\chi_{\mathscr{I}_i}\ast p(\bm{z}|\bm{0},\Sigma_i)d\bm{z}-\frac{\delta}{16N}\right)\\
&=&\int_{\mathbb{R}^m\backslash\mathscr{B}}\chi_{\mathscr{I}_i}\ast p(\bm{z}|\bm{0},\Sigma_i)d\bm{z}+\frac{\delta}{16N}\\
&<&\frac{\delta}{8N}.
\end{eqnarray*}
Finally we have
\begin{eqnarray}\label{P5-5}
&&\int_{\mathbb{R}^m}\left|\chi_{\mathscr{I}_i}\ast p(\bm{z}|\bm{0},\Sigma_i)-\chi_{\mathscr{I}_i}\ast p_{n_i}(\bm{z})\right|d\bm{z}\nonumber\\
&&<\int_{\mathscr{B}}\left|\chi_{\mathscr{I}_i}\ast p(\bm{z}|\bm{0},\Sigma_i)-\chi_{\mathscr{I}_i}\ast p_{n_i}(\bm{z})\right|d\bm{z}+\int_{\mathbb{R}^m\backslash\mathscr{B}}\chi_{\mathscr{I}_i}\ast p(\bm{z}|\bm{0},\Sigma_i)d\bm{z}+\int_{\mathbb{R}^m\backslash\mathscr{B}}\chi_{\mathscr{I}_i}\ast p_{n_i}(\bm{z})d\bm{z}\nonumber\\
&&<\frac{\delta}{4N}.
\end{eqnarray}

We have proved that the convolution can be approximated by a Gaussian mixture density function. In the following we combine all of the above to complete the final step of the proof. 

Combining (\ref{P5-5}) with (\ref{P5-4}), we derive
\begin{eqnarray}\label{P5-13}
&&\int_{\mathbb{R}^m}\left|\chi_{\mathscr{I}_i}(\bm{z})-\sum_{j=1}^{n_i}\underline{f}_i|\mathscr{I}_{ij}|p(\bm{z}-\bm{y}_{ij}|\bm{0},\Sigma_i)\right|d\bm{z}<\frac{\delta}{2N}.
\end{eqnarray}
Since $\bm{y}_{ij}$ is deterministic, $p(\bm{z}-\bm{y}_{ij}|\bm{0},\Sigma_i)$ can be regarded as a Gaussian density function with mean $\bm{y}_{ij}$ and covariance matrix $\Sigma_i$. We use notation $\bm{\mu}_{ij}$ to replace $\bm{y}_{ij}$ and  then $p(\bm{z}-\bm{y}_{ij}|\bm{0},\Sigma_i)$ can be rewritten as $p(\bm{z}|\bm{\mu}_{ij},\Sigma_i)$. Since inequality (\ref{P5-13}) holds for each $\mathscr{I}_i$, $i=1,\cdots,N$, we have
\begin{eqnarray}\label{P5-12}
&&\int_{\mathbb{R}^m}\left|\underline{f}_i\mathbf{1}_{\mathscr{I}_i}(\bm{z})-\sum_{j=1}^{n_i}\underline{f}_i|\mathscr{I}_{ij}|p(\bm{z}|\bm{\mu}_{ij},\Sigma_i)\right|d\bm{z}<\frac{\delta}{2N},~~
i=1,\cdots,N.
\end{eqnarray}

Finally, denote $\pi_{ij}=\underline{f}_i|\mathscr{I}_{ij}|$ and $p^*(\bm{z})=\sum_{i=1}^N\sum_{j=1}^{n_i}\pi_{ij}p(\bm{z}|\bm{\mu}_{ij},\Sigma_i)$. Then 
\begin{eqnarray*}
&&\sum_{i=1}^N\sum_{j=1}^{n_i}\pi_{ij}=\sum_{i=1}^N\underline{f}_i|\mathscr{I}_{i}|=\int_{\mathbb{R}^m}f_N(\bm{z})d\bm{z}\rightarrow1
\end{eqnarray*}
as $N\rightarrow+\infty$. Furthermore, combining (\ref{P5-1}) and (\ref{P5-12}) derives
\begin{eqnarray*}
&&\int_{\mathbb{R}^m}|f(\bm{z})-p^*(\bm{z})|d\bm{z}\\
&&<\int_{\mathbb{R}^m}|f(\bm{z})-f_N(\bm{z})|d\bm{z}+\int_{\mathbb{R}^m}|f_N(\bm{z})-p^*(\bm{z})|d\bm{z}\\
&&<\int_{\mathbb{R}^m}|f(\bm{z})-f_N(\bm{z})|d\bm{z}+\sum_{i=1}^N\int_{\mathbb{R}^m}\left|\underline{f}_i\mathbf{1}_{\mathscr{I}_i}(\bm{z})-\sum_{j=1}^{n_i}\underline{f}_i|\mathscr{I}_{ij}|p(\bm{z}|\bm{\mu}_{ij},\Sigma_i)\right|d\bm{z}\\
&&<\frac{\delta}{2}+\sum_{i=1}^N\frac{\delta}{2N}=\delta
\end{eqnarray*}
The proof is completed.
\end{proof}

Theorem \ref{fitness} shows that condition number constrained GMD still has the fitting capacity. It is worth mentioning that although it is possible to reduce the underlying fitness error and the asymptotic approximation error, the cost is that it may increase the number of components.
\section{Reformulation and Algorithm for QCCP under GMD}
In this section, we develop the solution methodology by reformulating the original problem to a tractable one under the condition that the distribution of $c(\bm{\xi},\bm{x})$ can be well approximated by the associated asymptotic GMD. More specifically, according to Theorem \ref{asy_GMD}, the density function of $c(\bm{\xi},\bm{x})$ is approximated as
\begin{eqnarray*}
&&p(z)=\sum_{i=1}^K\pi_ip\left(z|\mu_i(\bm{x}), \sigma_i^2(\bm{x})\right),
\end{eqnarray*}
where $\mu_i(\bm{x})=E_{p_i}(c(\bm{\xi},\bm{x}))$ and $\sigma_i^2(\bm{x})=V_{p_i}(c(\bm{\xi},\bm{x}))$.

Following \cite{hu2021},  problem ${\rm QCCP}$ can be equivalently formulated as
\begin{eqnarray*}
{\rm QCCP_0}&\min\limits_{\bm{x}\in \mathscr{X}}& \rho(\bm{x})\\
&{\rm s.t.}&\sum_{i=1}^K\pi_i\Phi\left(-\frac{\mu_i(\bm{x})}{\sigma_i(\bm{x})}\right)\geq1-\alpha,
\end{eqnarray*}
where $\Phi(\cdot)$ is the cumulative distribution function of standard normal distribution. By introducing auxiliary vector $\bm y$, the above problem can be further equivalently reformulated as
\begin{eqnarray*}
{\rm QCCP_1}&\min\limits_{\bm{x}\in \mathscr{X},\bm{y}\in\mathscr{Y}}& \rho(\bm{x})\\
&{\rm s.t.}&\Phi^{-1}(y_i)\sigma_i(\bm{x})+\mu_i(\bm{x})\leq0,~ i=1,\cdots,K,\\
&&\sum_{i=1}^K\pi_iy_i\geq1-\alpha,
\end{eqnarray*}
where $\mathscr{Y}=\{\bm{y}\in\mathbb{R}^K:0\leq y_i\leq1,i=1,\cdots,K\}$.

To derive a local optimal solution, some first-order algorithms can be used to solve ${\rm QCCP_1}$. In the following we consider a linear form of $A(\bm{x})$, $\bm{a}(\bm{x})$ and $a(\bm{x})$ and investigate its global optimal solution, i.e.,
\begin{eqnarray}\label{lin_assump}
&&A(\bm{x})=A_0+\sum_{i=1}^nx_iA_i,~\bm{a}(\bm{x})=\bm{a}_0+\sum_{i=1}^nx_i\bm{a}_i~\mbox{and}~a(\bm{x})=a_0+\sum_{i=1}^nx_ia_i.
\end{eqnarray}
Then a lemma from \cite{zhu2020} is introduced to reformulate ${\rm QCCP_1}$.
\begin{lemma}\label{linear_moment}
If $A(\bm{x})$, $\bm{a}(\bm{x})$ and $a(\bm{x})$ satisfy the form of (\ref{lin_assump}), then
\begin{eqnarray*}
E_{p_i}(c(\bm{\xi},\bm{x})) &=& \bm{\nu}_i^{\top}\widehat{\bm{x}},\\
V_{p_i}(c(\bm{\xi},\bm{x})) &=& \widehat{\bm{x}}^{\top}\left(\Psi_i+\frac{1}{2}\Phi_i\right)\widehat{\bm{x}},
\end{eqnarray*}
where
\begin{eqnarray*}
\widehat{\bm{x}}&=&(1;\bm{x}),\\
\bm{\nu}_i&=&\left(\frac{1}{2}{\rm tr}(A_j\Sigma_i)+\frac{1}{2}\bm{\mu}_i^{\top}A_j\bm{\mu}_i+\bm{a}_j^{\top}\bm{\mu}_i+a_j\right)_{j=0,\cdots,n},\\
\Psi_i&=&(A_0\bm{\mu}_i+\bm{a}_0,\cdots,A_n\bm{\mu}_i+\bm{a}_n)^{\top}\Sigma_i(A_0\bm{\mu}_i+\bm{a}_0,\cdots,A_n\bm{\mu}_i+\bm{a}_n),\\
\Phi_i&=&({\rm tr}(A_k\Sigma_iA_l\Sigma_i))_{k,l=0,\cdots,n}.
\end{eqnarray*}
\end{lemma}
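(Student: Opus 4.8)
The plan is to substitute the linear parametrization (\ref{lin_assump}) directly into the closed-form moment expressions supplied by Proposition \ref{moment} and then collect the resulting terms according to the coordinates of $\widehat{\bm{x}}=(1;\bm{x})$, treating the constant pieces (those built from $A_0,\bm{a}_0,a_0$) as the coefficients of the leading entry $\widehat{x}_0=1$. Because $A(\bm{x})$ and $\bm{a}(\bm{x})$ are affine in $\bm{x}$, every quantity entering the mean and variance is a linear or bilinear function of $\widehat{\bm{x}}$, so the entire argument reduces to careful bookkeeping.

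For the mean, Proposition \ref{moment} gives $E_{p_i}(c(\bm{\xi},\bm{x}))=\frac{1}{2}{\rm tr}(A(\bm{x})\Sigma_i)+\frac{1}{2}\bm{\mu}_i^{\top}A(\bm{x})\bm{\mu}_i+\bm{a}(\bm{x})^{\top}\bm{\mu}_i+a(\bm{x})$. Substituting $A(\bm{x})=A_0+\sum_{j=1}^n x_jA_j$ and the analogous expansions for $\bm{a}(\bm{x})$ and $a(\bm{x})$, and using the linearity of the trace and of the quadratic form $\bm{\mu}_i^{\top}(\cdot)\bm{\mu}_i$ in their matrix argument, I would read off that the coefficient of $x_j$ is exactly $\frac{1}{2}{\rm tr}(A_j\Sigma_i)+\frac{1}{2}\bm{\mu}_i^{\top}A_j\bm{\mu}_i+\bm{a}_j^{\top}\bm{\mu}_i+a_j$, i.e.\ the $j$th entry of $\bm{\nu}_i$, while the constant term reproduces the $j=0$ entry. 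This yields $E_{p_i}(c(\bm{\xi},\bm{x}))=\bm{\nu}_i^{\top}\widehat{\bm{x}}$.

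For the variance I would treat the two summands of $V_{p_i}$ separately. In the quadratic term, factor $A(\bm{x})\bm{\mu}_i+\bm{a}(\bm{x})=\sum_{j=0}^n\widehat{x}_j(A_j\bm{\mu}_i+\bm{a}_j)=W\widehat{\bm{x}}$, where $W=(A_0\bm{\mu}_i+\bm{a}_0,\cdots,A_n\bm{\mu}_i+\bm{a}_n)$; then $(A(\bm{x})\bm{\mu}_i+\bm{a}(\bm{x}))^{\top}\Sigma_i(A(\bm{x})\bm{\mu}_i+\bm{a}(\bm{x}))=\widehat{\bm{x}}^{\top}W^{\top}\Sigma_i W\widehat{\bm{x}}=\widehat{\bm{x}}^{\top}\Psi_i\widehat{\bm{x}}$, matching the stated $\Psi_i$. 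For the trace term, write $A(\bm{x})\Sigma_i=\sum_{j=0}^n\widehat{x}_j A_j\Sigma_i$, square, and use the bilinearity of the trace to get ${\rm tr}((A(\bm{x})\Sigma_i)^2)=\sum_{k,l=0}^n\widehat{x}_k\widehat{x}_l\,{\rm tr}(A_k\Sigma_iA_l\Sigma_i)=\widehat{\bm{x}}^{\top}\Phi_i\widehat{\bm{x}}$. Adding the two contributions with the factor $\frac{1}{2}$ on the trace piece delivers $V_{p_i}(c(\bm{\xi},\bm{x}))=\widehat{\bm{x}}^{\top}(\Psi_i+\frac{1}{2}\Phi_i)\widehat{\bm{x}}$.

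There is no genuine analytic difficulty here; the statement is an exact algebraic rearrangement of Proposition \ref{moment}. The only point deserving a line of justification is that $\Phi_i$ is a well-defined symmetric matrix: by the cyclic property of the trace, ${\rm tr}(A_l\Sigma_iA_k\Sigma_i)={\rm tr}(A_k\Sigma_iA_l\Sigma_i)$, so $(\Phi_i)_{kl}=(\Phi_i)_{lk}$ and the quadratic form $\widehat{\bm{x}}^{\top}\Phi_i\widehat{\bm{x}}$ represents the trace term unambiguously. The main thing to watch is the indexing convention, namely keeping the $j=0$ (constant) slot aligned with the leading entry $\widehat{x}_0=1$ consistently across both the mean and the variance computations.
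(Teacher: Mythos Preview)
Your argument is correct: substituting the affine parametrization into the moment formulas of Proposition~\ref{moment} and collecting terms in $\widehat{\bm{x}}$ yields the stated expressions, and your observation that the cyclic property of the trace guarantees symmetry of $\Phi_i$ is the only point that deserves explicit mention. The paper itself does not prove this lemma at all but simply imports it from \cite{zhu2020}; your direct algebraic verification is precisely the natural proof one would supply.
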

According to Lemma \ref{linear_moment}, problem ${\rm QCCP_1}$ is equivalent to
\begin{eqnarray*}
{\rm QCCP_{{\rm lin}}}&\min\limits_{\bm{x}\in \mathscr{X},\bm{y}\in\mathscr{Y}}& \rho(\bm{x})\\
&{\rm s.t.}&\Phi^{-1}\left(y_i\right)\sqrt{\widehat{\bm{x}}^{\top}\left(\Psi_i+\frac{1}{2}\Phi_i\right)\widehat{\bm{x}}}+\bm{\nu}_i^{\top}\widehat{\bm{x}}\leq0,~ i=1,\cdots,K,\\
&&\sum_{i=1}^K\pi_iy_i\geq1-\alpha.
\end{eqnarray*}
In general, problem ${\rm QCCP_{lin}}$ is a non-convex program. For problems with this structure, \cite{hu2021} propose a spatial branch-and-bound (BB) algorithm to derive the global optimal solution which is placed in Appendix D. Roughly speaking, the BB algorithm relaxes the constraint over the divided sub-domain of cube $\mathscr{Y}$ to solve an easier problem and generate a  lower bound for the sub-domain in each iteration. Once the optimal value of the relaxed problem is smaller than the current global optimal value and the optimal solution to the relaxed problem is also feasible to the original problem, the global solution of the original problem is updated and those branches with local lower bounds that are larger than the global one will be pruned. This iterative process generates a sequence of non-increasing values that converges to the global optimal value by solving a sequence of relaxed problems on the subdivided cubes, which is discussed in detail in the sequel. 

Suppose that $\widetilde{\mathscr{Y}}=\{\bm{y}\in\mathbb{R}^K:l_i\leq y_i\leq u_i, i=1,\cdots,K\}$ is a subdivided cube. A relaxed problem shown in the following is solved to derive a lower bound for the primary problem ${\rm QCCP_{lin}}$ over $\widetilde{\mathscr{Y}}$.
\begin{eqnarray*}
{\rm QCCP_{relax}}&\min\limits_{\bm{x}\in\mathscr{X},\bm{y}\in\widetilde{\mathscr{Y}}}& \rho(\bm{x})\\
&s.t.&\Phi^{-1}\left(l_i\right)\sqrt{\widehat{\bm{x}}^{\top}\left(\Psi_i+\frac{1}{2}\Phi_i\right)\widehat{\bm{x}}}+\bm{\nu}^{\top}_i\widehat{\bm{x}}\leq 0,~i=1,\cdots,K,\\
&&\sum_{i=1}^K\pi_iy_i\geq1-\alpha.
\end{eqnarray*}
\cite{hu2021} point out that if $\pi_i\geq2\alpha$ holds for $i=1,\cdots,K$, then the ${\rm QCCP_{relax}}$ is a convex program. 

In the real implementation, we can only achieve an $\epsilon$-optimal solution with the algorithm due to the termination criterion in the code. Suppose $(\bm{x},\bm{y})$ is a feasible solution to problem ${\rm QCCP_{relax}}$. If $h_i(\bm{x},\bm{y})-h_i^r(\bm{x},\bm{y})\leq\epsilon$, $i=1,\cdots,K$, where
\begin{eqnarray*}
&&h_i(\bm{x},\bm{y})=\Phi^{-1}\left(y_i\right)\sqrt{\widehat{\bm{x}}^{\top}\left(\Psi_i+\frac{1}{2}\Phi_i\right)\widehat{\bm{x}}}+\bm{\nu}^{\top}_i\widehat{\bm{x}},\\
&&h_i^r(\bm{x},\bm{y})=\Phi^{-1}\left(l_i\right)\sqrt{\widehat{\bm{x}}^{\top}\left(\Psi_i+\frac{1}{2}\Phi_i\right)\widehat{\bm{x}}}+\bm{\nu}^{\top}_i\widehat{\bm{x}}, 
\end{eqnarray*}
then we call $(\bm{x},\bm{y})$ an $\epsilon$-feasible solution to ${\rm QCCP_{lin}}$. If $(\bm{x},\bm{y})$ is the optimal solution and satisfy $h_i(\bm{x},\bm{y})-h_i^r(\bm{x},\bm{y})\leq\epsilon$, $i=1,\cdots,K$, we call it $\epsilon$-optimal solution. It is worth mentioning that the if the solution $(\bm{x},\bm{y})$ is $\epsilon$-feasible, it indicates that $0<y_i<1$, $i=1,\cdots,K$. Otherwise, if $y_i=1$, $h_i(\bm{x},\bm{y})-h_i^r(\bm{x},\bm{y})=+\infty$, which contradicts the condition of $\epsilon$-feasible. If $y_i=0$, both $h_i(\bm{x},\bm{y})$ and $h^r_i(\bm{x},\bm{y})$ are equal to $-\infty$. It remains inconvenient to deal with the subtraction of two infinities in implementation. Therefore, in the real implementation, we replace the feasible set $\mathscr{Y}$ with an approximated set $\mathscr{Y}^0=\{\bm{y}\in\mathbb{R}^K:\underline{y}_i\leq y_i\leq\overline{y}_i,i=1,\cdots,K\}$  as the input of the algorithm, where $0<\underline{y}_i\leq\overline{y}_i<1$, and $|\underline{y}_i|$ and $|1-\overline{y}_i|$ are sufficiently small numbers.

\begin{proposition}\label{complex}
Assume that $\mathscr{X}$ is compact and the initial cube $\mathscr{Y}^0$ of the Algorithm 1 satisfies $0<\underline{y}_i\leq\overline{y}_i<1$, $i=1,\cdots,K$. To generate an $\epsilon$-optimal solution to ${\rm QCCP_{lin}}$ or verify its infeasibility, the number of relaxed sub-problem ${\rm QCCP_{relax}}$ needed to be solved is at most
\begin{eqnarray*}
&&\left\lfloor\frac{x_{\max}d^*}{\epsilon}[y_{k^*}]\sqrt{\lambda^{*}}\right\rfloor^K,
\end{eqnarray*}
where $\lfloor\cdot\rfloor$ is the maximum integer smaller than or equal to the given number, $x_{\max}=\max\limits_{\bm{x}\in \mathscr{X}}\left\{||\widehat{\bm{x}}||\right\}$, $[y_{k^*}]=\max\{\overline{y}_i-\underline{y}_i,i=1,\cdots,K\}$, $d^*=\max\left\{\frac{1}{\phi\left(\Phi^{-1}(\underline{y}^*)\right)},\frac{1}{\phi\left(\Phi^{-1}(\overline{y}^*)\right)}\right\}$, $\phi(\cdot)$ is the standard normal density function,
$\underline{y}^*=\min\left\{\underline{y}_i,i=1,\cdots,K\right\}$, $\overline{y}^*=\max\left\{\overline{y}_i,i=1,\cdots,K\right\}$, and $\lambda^*$ is the maximum eigenvalue among $\Psi_i+\frac{1}{2}\Phi_i$, $i=1,\cdots,K$.
\end{proposition}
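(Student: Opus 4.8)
The plan is to convert the algorithm's $\epsilon$-termination criterion into a bound on the edge lengths of the subdivided cubes, and then to count how many such cubes a refinement of $\mathscr{Y}^0$ can produce. The key initial observation is that on a sub-cube $\widetilde{\mathscr{Y}}=\{\bm{y}:l_i\le y_i\le u_i,\,i=1,\dots,K\}$ the relaxation gap collapses to a single product: because the additive terms $\bm{\nu}_i^{\top}\widehat{\bm{x}}$ in $h_i$ and $h_i^r$ coincide, their difference is
\begin{eqnarray*}
h_i(\bm{x},\bm{y})-h_i^r(\bm{x},\bm{y})=\bigl(\Phi^{-1}(y_i)-\Phi^{-1}(l_i)\bigr)\sqrt{\widehat{\bm{x}}^{\top}\bigl(\Psi_i+\tfrac12\Phi_i\bigr)\widehat{\bm{x}}}.
\end{eqnarray*}
I would bound the two factors separately, combine them into a threshold edge length that forces this gap below $\epsilon$ for every $i$, and finally translate that threshold into a cardinality count.

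For the quadratic factor, I would use that $\Psi_i+\tfrac12\Phi_i$ is positive semidefinite together with the Rayleigh bound, so that $\sqrt{\widehat{\bm{x}}^{\top}(\Psi_i+\tfrac12\Phi_i)\widehat{\bm{x}}}\le\sqrt{\lambda^*}\,\|\widehat{\bm{x}}\|\le\sqrt{\lambda^*}\,x_{\max}$, the last step using compactness of $\mathscr{X}$. For the quantile factor, since $y_i\le u_i$ and $\Phi^{-1}$ is increasing, I would apply the mean value theorem to get $\Phi^{-1}(y_i)-\Phi^{-1}(l_i)\le\Phi^{-1}(u_i)-\Phi^{-1}(l_i)=(\Phi^{-1})'(\xi_i)(u_i-l_i)$ for some $\xi_i\in(l_i,u_i)$, and then use $(\Phi^{-1})'(p)=1/\phi(\Phi^{-1}(p))$. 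Because every sub-cube stays inside $\mathscr{Y}^0$, we have $\xi_i\in[\underline{y}^*,\overline{y}^*]$; and since $1/\phi(\Phi^{-1}(\cdot))$ is $U$-shaped with its minimum at $1/2$ and strictly increasing toward either endpoint, its maximum over any subinterval of $(0,1)$ is attained at an endpoint, giving $(\Phi^{-1})'(\xi_i)\le d^*$. Hence $\Phi^{-1}(y_i)-\Phi^{-1}(l_i)\le d^*(u_i-l_i)$.

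Combining the two bounds yields $h_i(\bm{x},\bm{y})-h_i^r(\bm{x},\bm{y})\le d^*\sqrt{\lambda^*}\,x_{\max}\,(u_i-l_i)$, so the $\epsilon$-feasibility test passes on $\widetilde{\mathscr{Y}}$ as soon as every edge satisfies $u_i-l_i\le\epsilon/(d^*\sqrt{\lambda^*}\,x_{\max})=:w^*$. The final step is a counting argument: in the worst case the branch-and-bound tree must refine $\mathscr{Y}^0$ until each terminal cube has all edges at most $w^*$; partitioning each coordinate (of initial length at most $[y_{k^*}]$) into sub-intervals of length $w^*$ takes on the order of $[y_{k^*}]/w^*=x_{\max}d^*[y_{k^*}]\sqrt{\lambda^*}/\epsilon$ pieces, so the number of terminal cubes, and thus the number of relaxed problems ${\rm QCCP_{relax}}$ solved, is at most the stated $\lfloor x_{\max}d^*[y_{k^*}]\sqrt{\lambda^*}/\epsilon\rfloor^K$.

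I expect two steps to require the most care. The first is the uniform quantile-derivative bound: I must verify that all intermediate points $\xi_i$ produced by the algorithm indeed remain in $[\underline{y}^*,\overline{y}^*]$ (guaranteed by the choice of $\mathscr{Y}^0$ and the fact that bisection only shrinks cubes) and that the maximum of $1/\phi(\Phi^{-1}(\cdot))$ over such an interval is genuinely $d^*$ rather than some interior value. The second, and more delicate, is the counting step: I need to argue that the number of relaxed subproblems actually solved never exceeds the number of cells in the finest uniform grid with edge $w^*$, i.e., that the refinement and pruning rules cannot generate more terminal cubes than this worst-case tiling, and to reconcile the exact rounding ($\lfloor\cdot\rfloor$ versus $\lceil\cdot\rceil$) with the algorithm's subdivision rule in Appendix~D.
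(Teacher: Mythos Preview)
Your proposal is correct and follows essentially the same route as the paper: bound the relaxation gap $h_i-h_i^r$ by factoring it as $(\Phi^{-1}(y_i)-\Phi^{-1}(l_i))$ times the square-root quadratic form, control the first factor via the mean value theorem with derivative bound $d^*$ and the second via the Rayleigh quotient with $\sqrt{\lambda^*}\,x_{\max}$, deduce the critical edge length $w^*=\epsilon/(d^*\sqrt{\lambda^*}\,x_{\max})$, and then count. The paper's counting step is slightly more explicit---it tracks the bisection tree directly, noting that $2^n-1$ cuts reduce an edge by a factor $2^n$ and then solves for $n^*$---but this is exactly the refinement of the point you already flagged as needing care, and it lands on the same $\lfloor\cdot\rfloor^K$ bound.
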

\begin{proof}
For any $\bm{x}\in\mathscr{X}$ and $\bm{y}\in\mathscr{Y}^0$, 
\begin{eqnarray*}
h_i(\bm{x},\bm{y})-h^r_i(\bm{x},\bm{y})&=&\left(\Phi^{-1}(y_i)-\Phi^{-1}(\underline{y}_i)\right)\sqrt{\widehat{\bm{x}}^{\top}\left(\Psi_i+\frac{1}{2}\Phi_i\right)\widehat{\bm{x}}}\\
&=&(y_i-\underline{y}_i)\left.\frac{d}{dy}\Phi^{-1}(y)\right|_{y=y_i^0}\sqrt{\widehat{\bm{x}}^{\top}\left(\Psi_i+\frac{1}{2}\Phi_i\right)\widehat{\bm{x}}}\\
&\leq&(\overline{y}_i-\underline{y}_i)d^*x_{\max}\sqrt{\lambda^{*}},
\end{eqnarray*}
where $y_i^0\in[\underline{y}_i,\overline{y_i}]$. The last inequality hold since the derivative of $\Phi^{-1}(y)$ is $\frac{1}{\phi\left(\Phi^{-1}(y)\right)}$ and $\lambda^*I-(\Psi_i+\frac{1}{2}\Phi_i)$ is positive semi-definite and thus $\lambda^*x_{\max}^2\geq\lambda^*||\widehat{\bm{x}}||^2\geq\widehat{\bm{x}}^{\top}\left(\Psi_i+\frac{1}{2}\Phi_i\right)\widehat{\bm{x}}$. 

Suppose that a cube $\widetilde{\mathscr{Y}}=\prod_{i=1}^{K}[l_i,u_i]$ satisfies \begin{eqnarray*}
&&u_i-l_i\leq\frac{\epsilon}{d^*x_{\max}\sqrt{\lambda^{*}}},~i=1,\cdots,K.
\end{eqnarray*}
If problem ${\rm QCCP_{relax}}$ is feasible over the $\widetilde{\mathscr{Y}}$, denote the optimal solution as $(\widetilde{\bm{x}}^*,\widetilde{\bm{y}}^*)$. Since $h_i(\widetilde{\bm{x}}^*,\widetilde{\bm{y}}^*)-h^r_i(\widetilde{\bm{x}}^*,\widetilde{\bm{y}}^*)\leq\epsilon$, The solution is an $\epsilon$-optimal solution over $\widetilde{\mathscr{Y}}$. If problem ${\rm QCCP_{relax}}$ is infeasible over $\widetilde{\mathscr{Y}}$, problem ${\rm QCCP_{lin}}$ must be infeasible over $\widetilde{\mathscr{Y}}$. In both cases, $\widetilde{\mathscr{Y}}$ will not be divided into two smaller cubes. 

According to the algorithm, the initial cube $\mathscr{Y}^0$ will be divided into two smaller cubes from the middle point of the longest edge if the longest edge is larger than $\frac{\epsilon}{d^*x_{\max}\sqrt{\lambda^{*}}}$. Now consider the $i$th edge of the cube with the length of $[y_i]=\overline{y}_i-\underline{y}_i\gg\frac{\epsilon}{d^*x_{\max}\sqrt{\lambda^{*}}}$. According to the rule cutting in the middle, to generate all instances with the length of this edge equal to $\left(\frac{1}{2}\right)^1[y_i]$, it is only required $2^0=1$ time of division. If all instances on this edge are equal to $\left(\frac{1}{2}\right)^2[y_i]$, it is required $2^0+2^1=3$ times of division. It is not difficult to conclude that $2^{n}-1$ times of division can obtain all instances that the length of this edge is equal to $\left(\frac{1}{2}\right)^n[y_i]$.

Let $n^*={\rm log}_2\left\lfloor\frac{d^*x_{\max}\sqrt{\lambda^{*}}[y_i]}{\epsilon}+1\right\rfloor$. Then the instances with the length of the $i$th edge are equal to
\begin{eqnarray*}
&&[y_i]\left(\frac{1}{2}\right)^{{\rm log}_2\left\lfloor\frac{d^*x_{\max}\sqrt{\lambda^{*}}[y_i]}{\epsilon}+1\right\rfloor}=[y_i]\frac{1}{\left\lfloor\frac{d^*x_{\max}\sqrt{\lambda^{*}}[y_i]}{\epsilon}+1\right\rfloor}<[y_i]\frac{1}{\frac{d^*x_{\max}\sqrt{\lambda^{*}}[y_i]}{\epsilon}}=\frac{\epsilon}{d^*x_{\max}\sqrt{\lambda^{*}}}.
\end{eqnarray*}
Therefore this edge is no longer divided and the corresponding number of partition is 
\begin{eqnarray*}
&&2^{n^*}-1=\left\lfloor\frac{d^*x_{\max}\sqrt{\lambda^{*}}[y_i]}{\epsilon}+1\right\rfloor-1=\left\lfloor\frac{d^*x_{\max}\sqrt{\lambda^{*}}[y_i]}{\epsilon}\right\rfloor.
\end{eqnarray*}

Note that each division corresponds to one computation of problem ${\rm QCCP_{relax}}$. This implies that the number of division is equal to the number of computation of ${\rm QCCP_{relax}}$. Therefore, the number of problem ${\rm QCCP_{relax}}$ solved is at most
\begin{eqnarray*}
&&\left\lfloor\frac{d^*x_{\max}\sqrt{\lambda^{*}}[y_1]}{\epsilon}\right\rfloor\times\cdots\times\left\lfloor\frac{d^*x_{\max}\sqrt{\lambda^{*}}[y_K]}{\epsilon}\right\rfloor\leq
\left\lfloor\frac{d^*x_{\max}\sqrt{\lambda^{*}}[y_{k^*}]}{\epsilon}\right\rfloor^K.
\end{eqnarray*}
The proof is completed.
\end{proof}
Proposition \ref{complex} provides a worst-case estimation of the number of sub-problems $\rm QCCP_{lin}$ solved in the BB algorithm. The complexity of the algorithm is exponential with respect to the number of components $K$. Fortunately, $K$ is usually a small number. Furthermore, as only the sub-cubes that are interacted with the hyperplane $\sum_{i=1}^K\pi_iy_i=1-\alpha$ are considered in the algorithm, the number of sub-problems needed to be solved is far fewer than the worst-case estimation in proposition \ref{complex}.

\section{Numerical Simulation}
In this section we first conduct numerical experiments to verify the theoretical results. Then we test the efficiency of the BB algorithm.

\subsection{Test of Conditions for Asymptotic Distribution}
We start from equation (\ref{sum_chi}) to test how the mean $\alpha_i$'s and the weight $\omega_i$'s affect the asymptotic errors by directly comparing the density function of $Z_h$ with the corresponding theoretical asymptotic density function. 

We simulate 20000 samples from $h=10$ independent normal distributed variables with unit variance and different mean settings, and then calculate the weighted sum of the square of these samples to obtain samples of $Z_h$. The sample histogram of $Z_h$ is approximated by a smooth curve, which is a representation of the density function of $Z_h$. The asymptotic distribution is directly derived from Theorem \ref{asy_GMD} by setting $K=1$. In Figure 1, three sub-figures are displayed with different settings of mean and weight: $\alpha_i=0$, $i=1,\cdots,h$ and $\omega_i=\frac{1}{2^i}$, $i=1,\cdots,h$ for the left sub-figure; $\alpha_i=0$, $i=1,\cdots,h$ and $\omega_i=1$, $i=1,\cdots,m$ for the middle sub-figure; $\alpha_i$, $i=1,\cdots,h$ are uniformly sampled from (0,10) and $\omega_i=\frac{1}{2^i}$, $i=1,\cdots,h$ for the right sub-figure. As we can see, when the means are non-zeros, the asymptotic approximation error is small even though the weights of all random variables are not even.  When the weights are equal, the asymptotic approximation error decreases as compared with the unequal case. These findings are consistent with the result of Theorem \ref{con_rate}.

\begin{figure}[H]
  \begin{minipage}{0.3\linewidth}
  \centerline{\includegraphics[width=6cm]{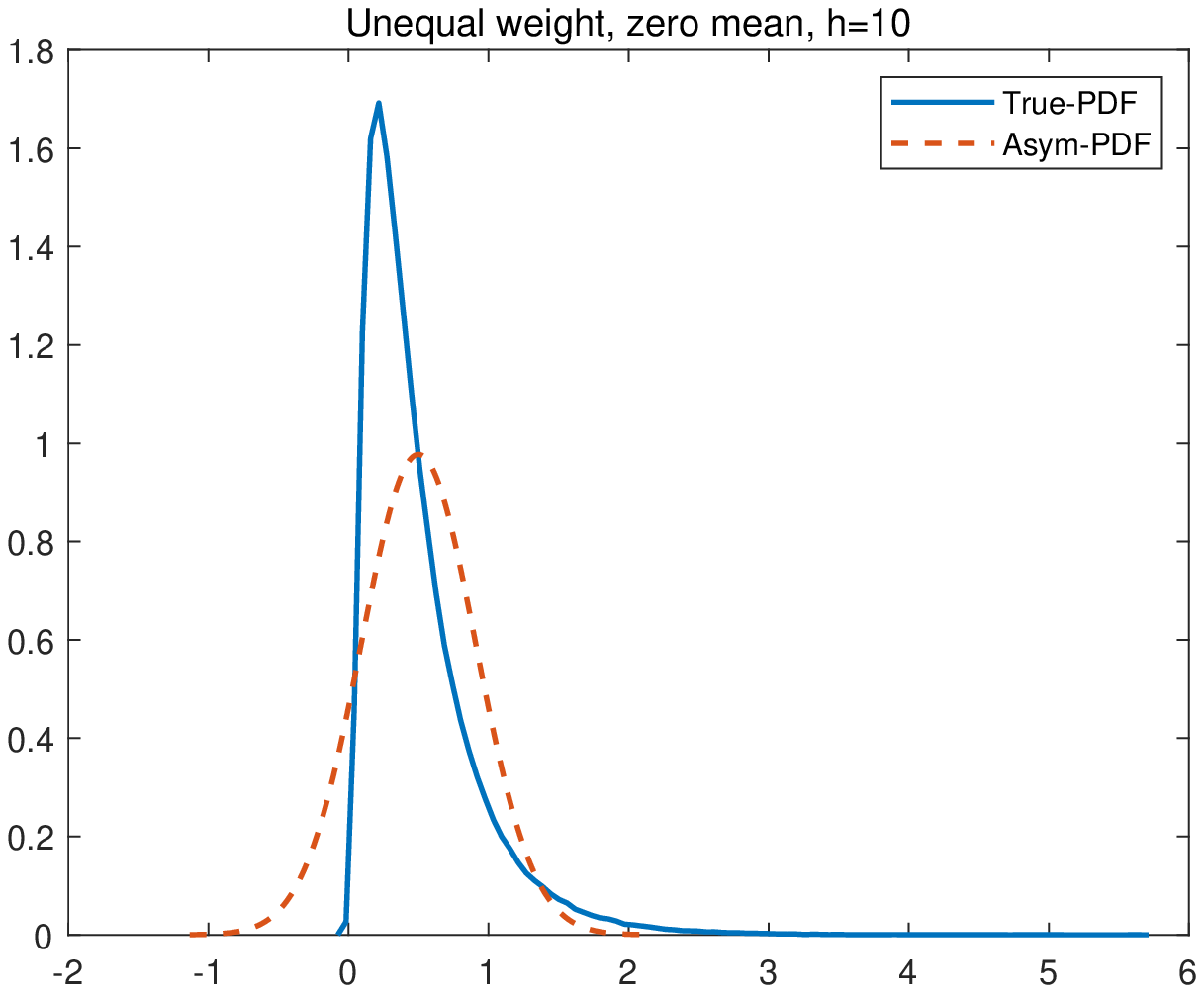}}
  \end{minipage}
  \hfill
  \begin{minipage}{0.3\linewidth}
  \centerline{\includegraphics[width=6cm]{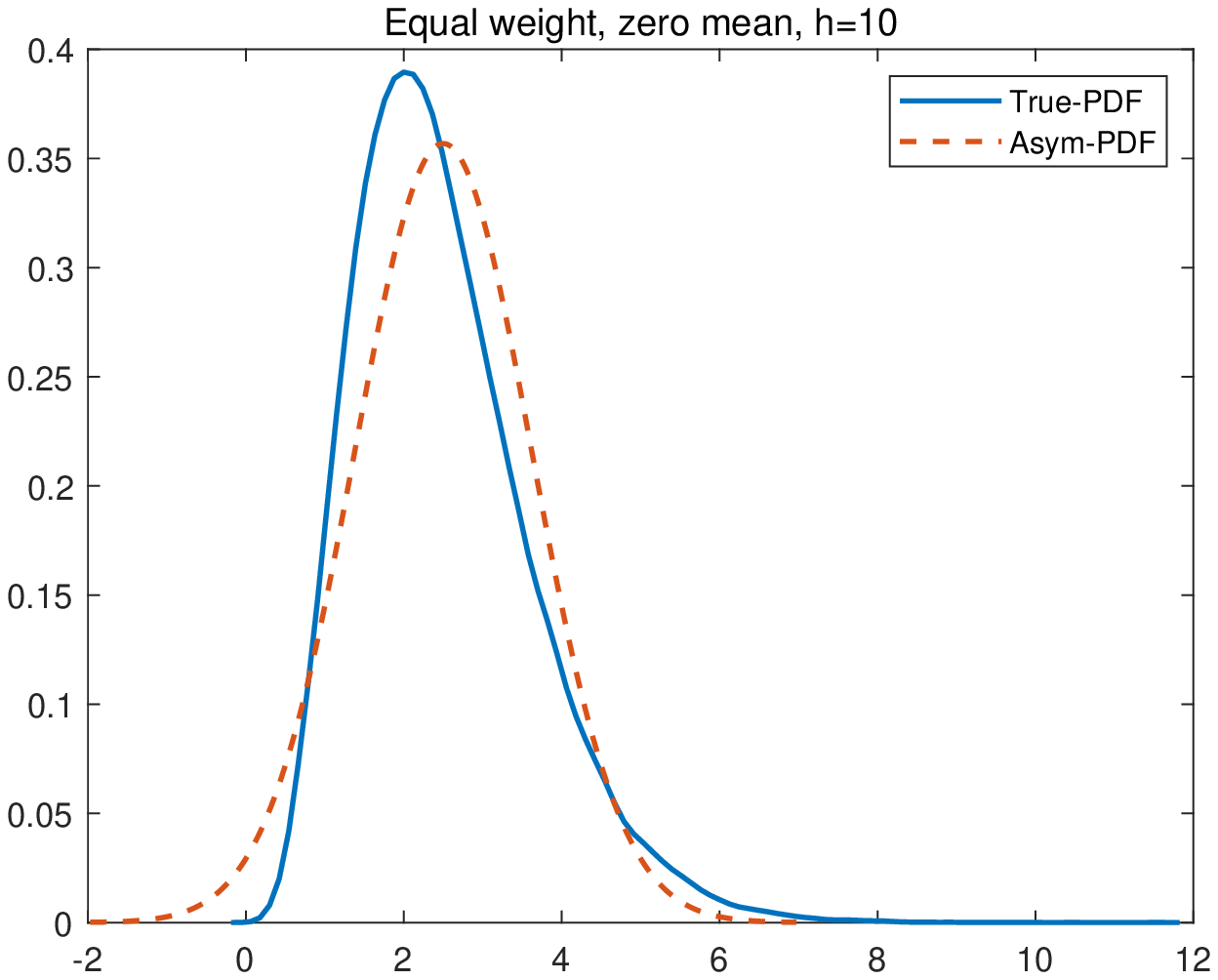}}
  \end{minipage}
  \hfill
  \begin{minipage}{0.3\linewidth}
  \centerline{\includegraphics[width=6cm]{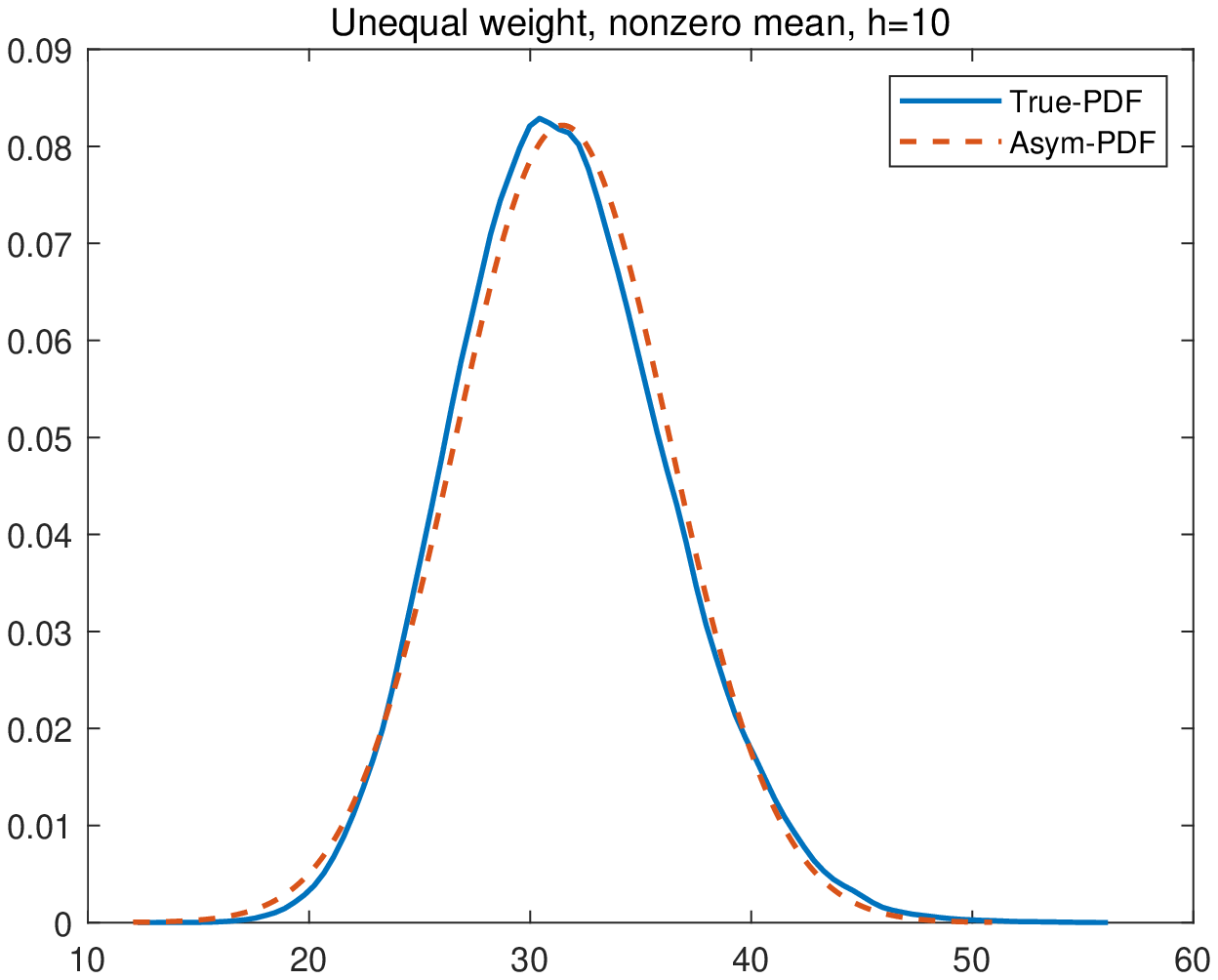}}
  \end{minipage}
  \caption{Effects of different factors on asymptotic approximation errors (GMD with $K=1$)}
\end{figure}

Then we use other numerical experiments to directly verify the result of Theorem \ref{asy_GMD}, again by comparing the density functions. We set $K=3$, $A={\rm I}$, $\bm{a}=\bm{0}$ and $a=0$. The covariance matrix of each component is generated by $\Sigma_i=D\Lambda D^{\top}$, where $D$ is an orthogonal matrix that is randomly generated, and $\Lambda$ is a diagonal matrix whose elements are uniformly generated from (0,1). The mixture weights $\pi_i$, $i=1,\cdots,K$, are uniformly generated and then regularized. 

The results are displayed in Figure 2. By comparing the left and the middle sub-figures, we can see that as $m$ increases, the true distribution of $c(\bm{\xi},\bm{x})$ converges to its asymptotic distribution. In the right sub-figure, we set the means of GMD components to be nonzero. Specifically, each $\bm{\mu}_i$ is uniformly generated from (0,10). It shows that the asymptotic approximation error of the case with nonzero means is smaller than that of the case with zero means.

\begin{figure}[H]
  \begin{minipage}{0.3\linewidth}
  \centerline{\includegraphics[width=6cm]{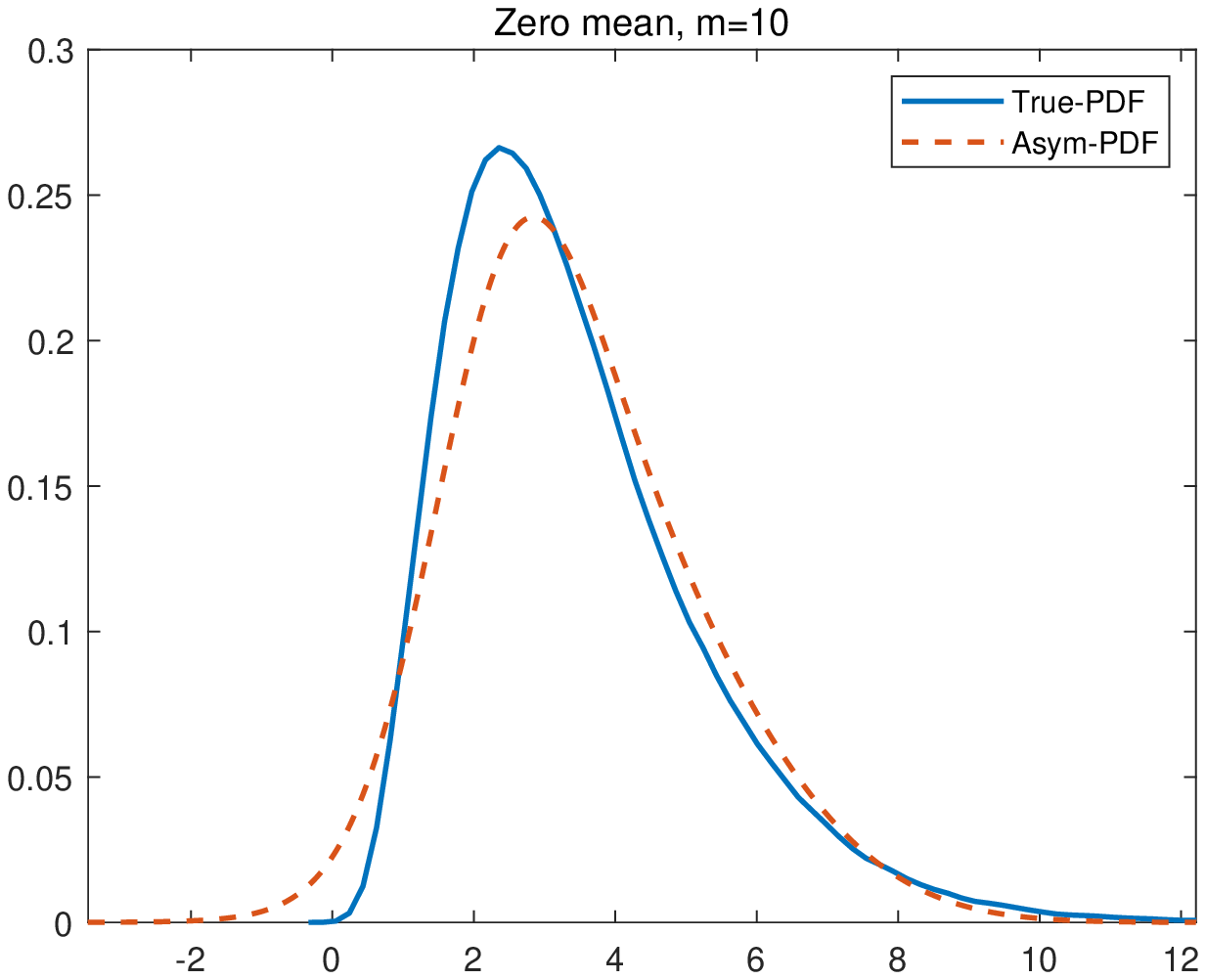}}
  \end{minipage}
  \hfill
  \begin{minipage}{0.3\linewidth}
  \centerline{\includegraphics[width=6cm]{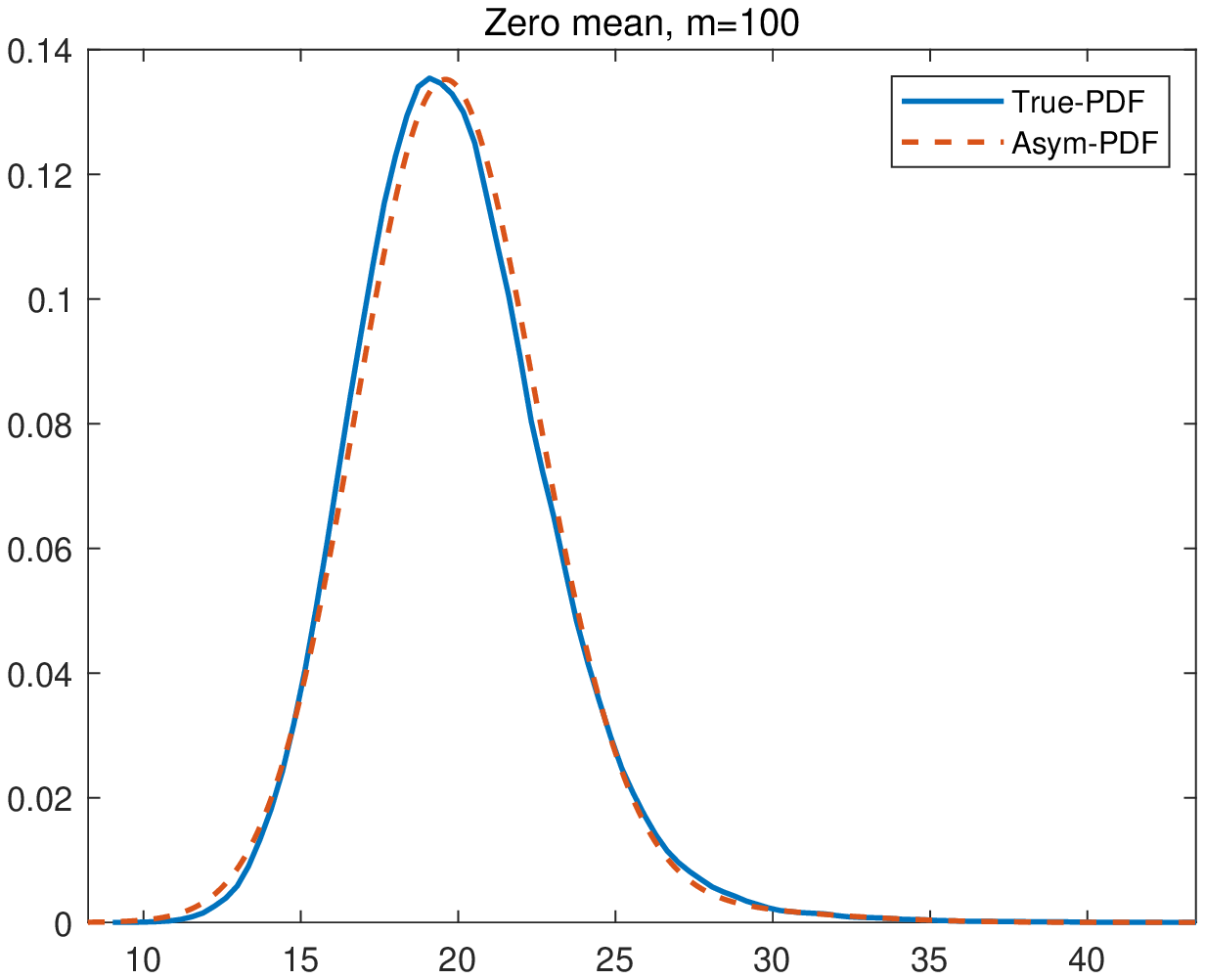}}
  \end{minipage}
  \hfill
  \begin{minipage}{0.3\linewidth}
  \centerline{\includegraphics[width=6cm]{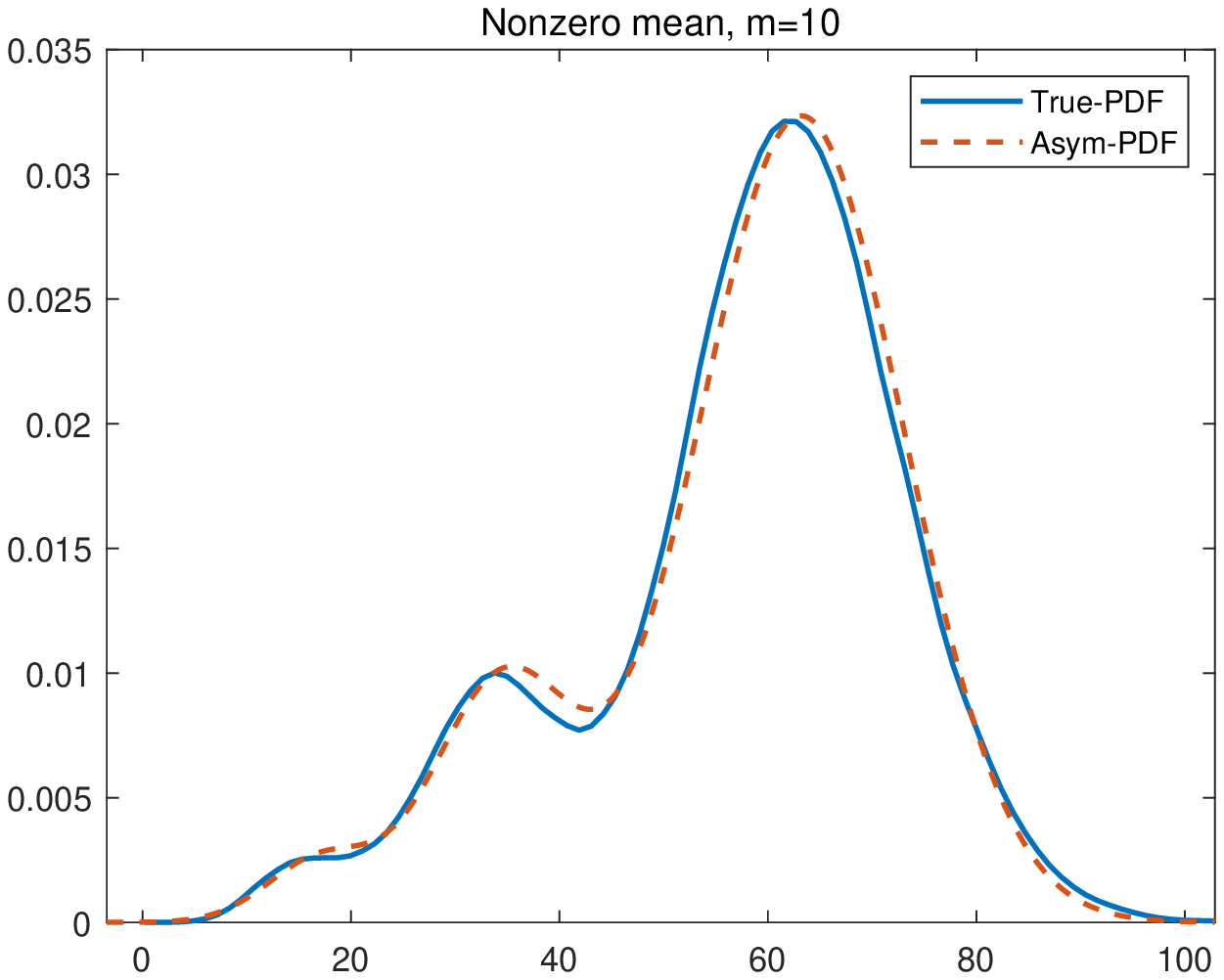}}
  \end{minipage}
  \caption{Effects of different factors on asymptotic approximation errors (GMD with $K=3$)}
  \end{figure}

Finally, we provide simulations to verify the implications of Theorem \ref{con_rate}, Proposition \ref{con_num} and Theorem \ref{fitness}, which indicate that the restriction of the condition number of $\Sigma_i$, $i=1,\cdots,K$ can reduce the underlying fitness error and asymptotic approximation error simultaneously. The simulation procedures are as follows. First, randomly generate 20000 samples from a 30-dimensional random vector $\bm{\xi}$ that follows a specified distribution as the benchmark for comparison, which is usually unknown in reality. Then, estimate a GMD or a GMD with condition number constraints with the samples. Denote by $\hat{\bm{\xi}}$ the random vector following the estimated GMD. Next, use the Monte Carlo method to draw the density function of $c(\bm{\xi},\bm{x})$ and the density function of $c(\hat{\bm{\xi}},\bm{x})$. By comparing these two density functions we can check the underlying fitness error. Finally, draw the asymptotic density function of $c(\hat{\bm{\xi}},\bm{x})$ according to Theorem \ref{asy_GMD}, and evaluate the asymptotic approximation error by comparing it with the density function of $c(\hat{\bm{\xi}},\bm{x})$.

More specifically, in the first step, we specify the distribution of $\bm{\xi}$ as a GMD with the number of components $K=2$. For the $i$th component, $\bm{\mu}_i$ is uniformly generated from (0,1), $\Sigma_i=LL^{\top}$, where the elements of $L$ are uniformly generated from (0.5,1.5), and the mixture weight $\pi_i$ is randomly generated. In the simulating results shown in Figure 3, the condition number of the covariance matrices are 436510 and 1742000, respectively. In the second step, the EM algorithm that can be referred to \cite{mclachlan1997} is used to estimate the parameters of GMD. In addition, $A={\rm I}$, $\bm{a}$ is randomly generated from (-100,100) and $a=0$.

In Figure 3, ``True-PDF'', ``Esti-PDF'', and ``Asym-PDF'' represent the density function of $c(\bm{\xi},\bm{x})$, $c(\hat{\bm{\xi}},\bm{x})$, and the theoretical asymptotic distribution of $c(\hat{\bm{\xi}},\bm{x})$, respectively. In the left sub-figure, we use the unconstrained GMD in the second step. The estimated density with two components can fit the true density well but the asymptotic density is far away from the estimated density. It implies that the underlying fitness error is small but the asymptotic approximation error is large. When we impose constraints on the condition number, as shown in the middle sub-figure, the estimated density is quite different from the true density but is close to its asymptotic density, indicating that the underlying fitness error is large but the asymptotic approximation error is small. The right sub-figure shows that as the number of components increases, the asymptotic distribution based on the GMD with constraints on condition number can approximate well to the true distribution, implying that the underlying fitness error and asymptotic approximation error simultaneously decrease.
\begin{figure}[H]
  \begin{minipage}{0.3\linewidth}
  \centerline{\includegraphics[width=6cm]{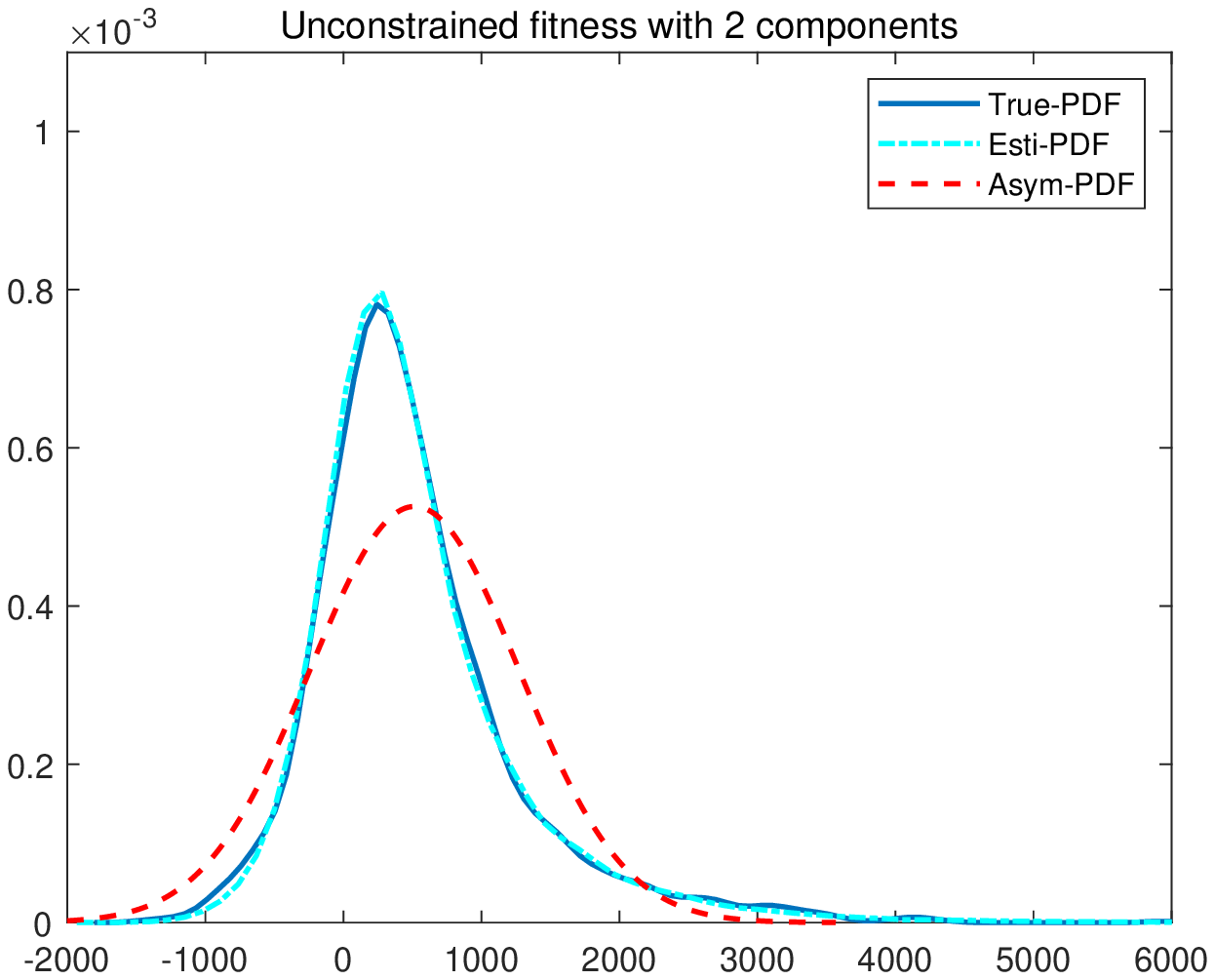}}
  \end{minipage}
  \hfill
  \begin{minipage}{0.3\linewidth}
  \centerline{\includegraphics[width=6cm]{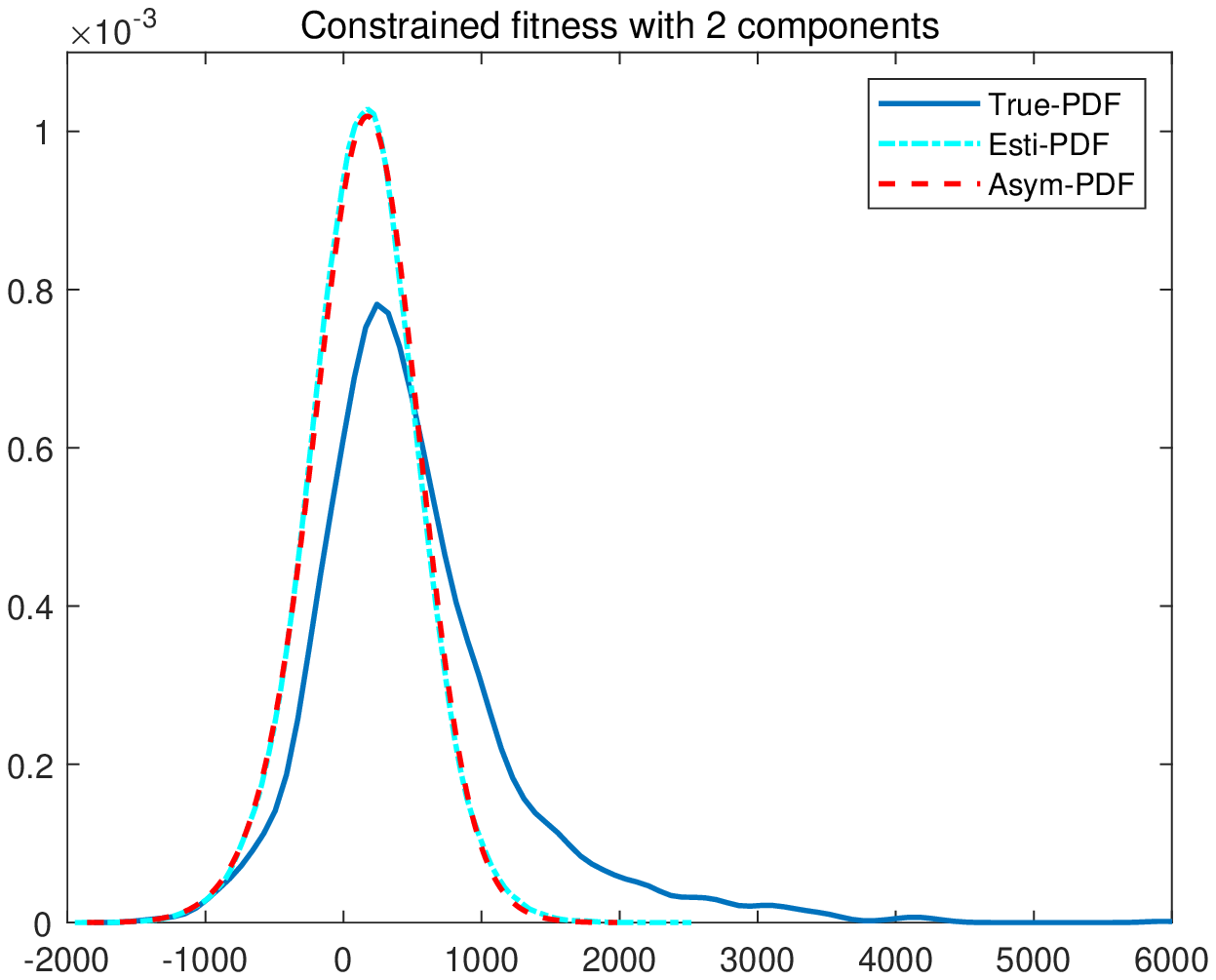}}
  \end{minipage}
  \hfill
  \begin{minipage}{0.3\linewidth}
  \centerline{\includegraphics[width=6cm]{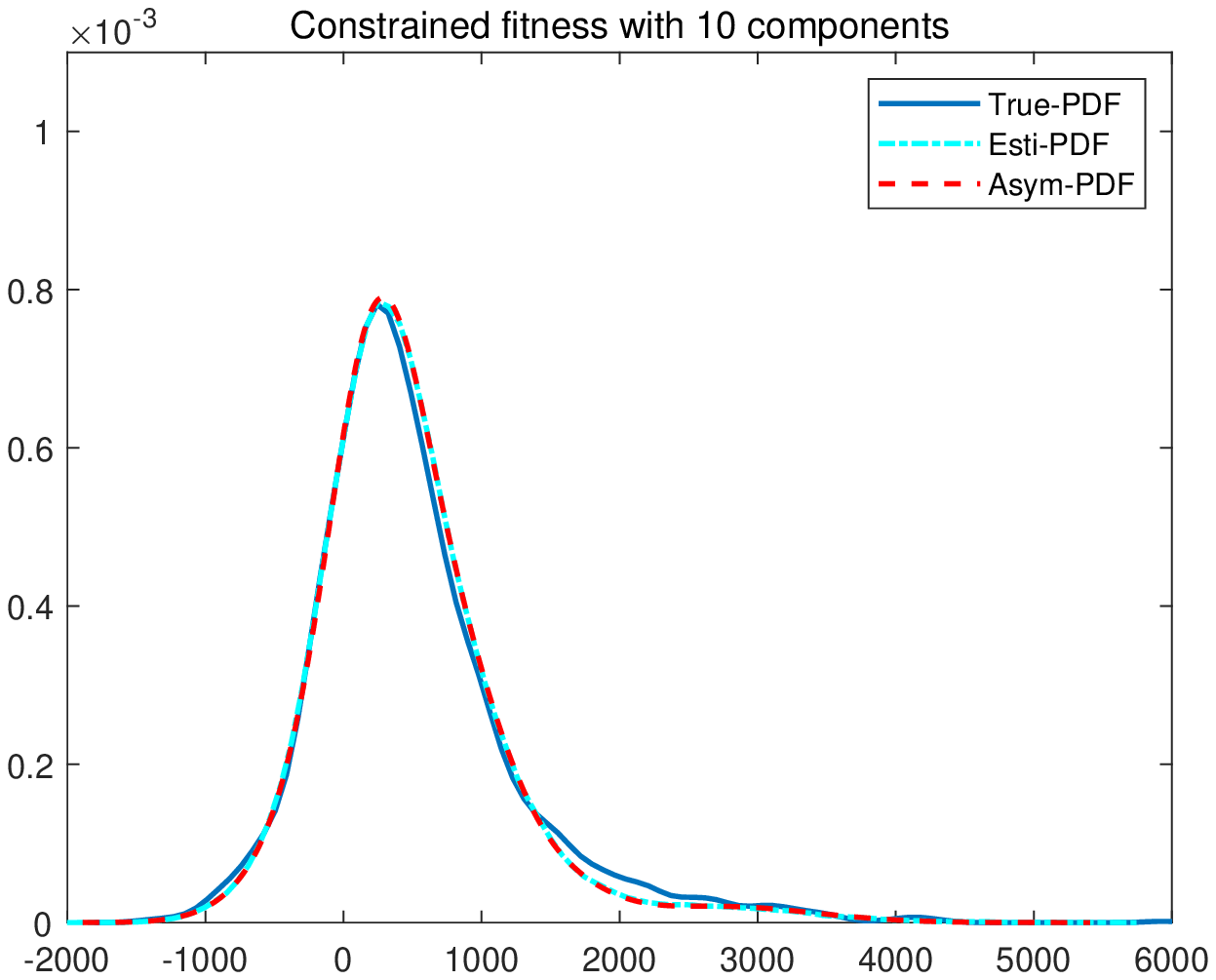}}
  \end{minipage}
  \caption{Fitting capability of condition number constrained GMD}
\end{figure}

\subsection{Test of BB Algorithm}
In this subsection,  we use some numerical examples to test the efficiency of the BB algorithm. We set the objective function as a linear function $\rho(\bm{x})=b_0+\sum_{i=1}^nb_ix_i$, where $b_i$, $i=0,\cdots,n$, are uniformly generated from (-5,5). To construct a non-convex ${\rm QCCP_{lin}}$, for the $i$th component, the entries of $\bm{\mu}_i$ are randomly generated from $(9i-8,9i+1)$, and the covariance matrix is calculated by $LL^{\top}$, where the entries of $L$ are randomly generated from $(i,i+1)$. In addition, we set $m=n$, $A_i=O$, $i=1,\cdots,n$, the $i$th entry of $\bm{a}_i$ is -1 and otherwise 0, $i=1,\cdots,n$, $\bm{a}_0=-\bm{1}$, and $a_i=0$, $i=0,\cdots,n$. For the decision vector $\bm{x}$, let $x_1=1$ and other entries vary from [-100,100]. The confidence level is set at $\alpha=0.05$.

We compare the efficiency of the BB algorithm and the optimization function ``fmincon'' from MATLAB. In the options of ``fmincon'', the algorithm is set as ``SQP'' and the maximal function evaluation and maximal iteration are both set at 30000. For the BB algorithm, if the relative error between the lower bound and the current optimal value is smaller than $10^{-2}$, the algorithm is terminated. If a branch with lower bound $\bm{l}$ and upper bound $\bm{u}$ for $\bm{y}$ satisfies $\|\bm{u}-\bm{l}\|/\|\bm{l}\|\leq10^{-2}$, this branch will not be further divided. We use the optimal solution from ``fmincon'' as the initial point of the BB algorithm. 

Table 1 and Table 2 display the optimal value and computational time of simulated problems with different problem size settings. In this simulation, the mixture weights are randomly generated and satisfy $\pi_i\geq\alpha$, $i=1,\cdots,K$ so that the sub-problems are convex. For each problem size the experiments are executed 5 times and ``Max'', ``Ave'', and ``Min'' represent the maximum, average, and minimum values in the five experiments, respectively. Since the sub-problems of the BB algorithm are all convex, the BB algorithm can obtain the global optimal solution in a short time. In addition, as we can see, compared with the BB algorithm, we find that ``fmincon'' spend less time but only obtain a local optimal solution.

\begin{table}[H]
  \begin{center}
  \caption{Optimal value obtained by BB and Fmincon: convex sub-problem}
  \renewcommand\arraystretch{0.8}
  \begin{tabular*}{\textwidth}{@{\extracolsep{\fill}}ccd{6.3}d{6.3}d{6.3}d{6.3}d{6.3}d{6.3}@{\extracolsep{\fill}}}
  \toprule
   ~ & ~  & \multicolumn{3}{@{}c@{}}{BB} & \multicolumn{3}{@{}c@{}}{Fmincon}\\
  \cmidrule{3-5}\cmidrule{6-8}
  \multicolumn{1}{c}{$K$} &  \multicolumn{1}{c}{$n$} &    \multicolumn{1}{c}{Max} &    \multicolumn{1}{c}{Ave} &   \multicolumn{1}{c}{Min}     & \multicolumn{1}{c}{Max}  &  \multicolumn{1}{c}{Ave} &   \multicolumn{1}{c}{Min} \\
  \midrule
   & 50 & -611.03& 	-3341.81& 	-6345.70& 	-407.19& 	-2962.81& 	-5694.38\\
  2 & 100 &-4493.19& 	-8268.63& 	-12758.77& 	-3920.77& 	-7743.47& 	-12193.84\\
   & 200 &-12522.61& 	-18967.46& 	-35916.11& 	-10690.08& 	-17122.09& 	-33868.77\\
  \midrule
   & 50 & -989.28& 	-2324.47& 	-4538.31& 	-72.49& 	-1618.60& 	-4152.89\\
  3 & 100 &-7722.69& 	-12366.06& 	-17369.61& 	-6550.45& 	-10956.14& 	-16073.30\\
   & 200 &-8932.67& 	-17104.38&	-33098.43& 	-5949.81& 	-14680.56& 	-30000.16\\
  \midrule
   & 50 &-2980.62& 	-4949.57& 	-7741.34& 	-1927.50& 	-4065.83& 	-6743.16\\
  4 & 100 &-6094.91& 	-10166.83& 	-18501.75& 	-4537.32& 	-8533.31& 	-16319.39\\
   & 200 &-8143.13& 	-18400.38& 	-31427.90& 	-3281.08& 	-14422.20& 	-28218.68\\
  \bottomrule
  \end{tabular*}
  \end{center}
\end{table}

\begin{table}[H]
  \begin{center}
  \caption{Computational time of BB and Fmincon: convex sub-problem (Second)}
  \renewcommand\arraystretch{0.8}
  \begin{tabular*}{\textwidth}{@{\extracolsep{\fill}}ccd{4.5}d{4.5}d{4.5}d{4.5}d{4.5}d{3.5}@{\extracolsep{\fill}}}
  \toprule
  ~ & ~ & \multicolumn{3}{c}{BB}&  \multicolumn{3}{c}{Fmincon}\\
  \cmidrule{3-5}\cmidrule{6-8}
  \multicolumn{1}{c}{$K$} &  \multicolumn{1}{c}{$n$} &    \multicolumn{1}{c}{Max} &    \multicolumn{1}{c}{Ave} &   \multicolumn{1}{c}{Min}     & \multicolumn{1}{c}{Max}  &  \multicolumn{1}{c}{Ave} &   \multicolumn{1}{c}{Min} \\
  \midrule
   & 50 & 1.10 &	0.96 &	0.83 & 	0.70& 	0.57& 	0.45\\
  2 & 100 &3.05 &	2.77 &	2.53 & 	2.02& 	1.67& 	1.44\\
   & 200 &15.08 &	13.34 &	10.58 & 	9.40& 	8.18& 	5.83\\
  \midrule
   & 50 &7.59 &	4.08 &	2.04 & 	0.84& 	0.54& 	0.36\\
  3 & 100 &17.80 &	9.88 &	5.35 & 	6.24& 	2.73& 	1.54\\
   & 200 &77.37 &	58.30 &	23.77 & 	12.46& 	11.66& 	10.09\\
  \midrule
   & 50 &12.87 &	7.56 &	3.44 & 	0.69& 	0.62& 	0.48\\
  4 & 100 &86.15 &	40.69 &	9.58  &	2.82& 	2.01& 	1.65\\
   & 200 &297.62 &	184.20 &	100.57 & 	15.72& 	12.66& 	10.42\\
  \bottomrule
  \end{tabular*}
  \end{center}
\end{table}

Then we execute other experiments with the same parameter setting except for $\pi_i$, $i=1,\cdots,K$, which are randomly generated such that there is at least one $\pi_i$ satisfies $\pi_i<2\alpha$. In this case, the sub-problems of the BB algorithm are non-convex. Table 3 and Table 4 display the optimal value and the corresponding computational time. We can see that the computational time of BB algorithm is much larger than that when sub-problems are convex, even though the problem size is smaller.

\begin{table}[H]
  \begin{center}
  \caption{Optimal value obtained by BB and Fmincon: non-convex sub-problem}
  \renewcommand\arraystretch{0.8}
  \begin{tabular*}{\textwidth}{@{\extracolsep{\fill}}ccd{5.4}d{5.4}d{5.4}d{5.4}d{5.4}d{5.4}@{\extracolsep{\fill}}}
  \toprule
  ~ & ~ & \multicolumn{3}{c}{BB} & \multicolumn{3}{c}{Fmincon}\\
  \cmidrule{3-5}\cmidrule{6-8}
    \multicolumn{1}{c}{$K$} &  \multicolumn{1}{c}{$n$} &    \multicolumn{1}{c}{Max} &    \multicolumn{1}{c}{Ave} &   \multicolumn{1}{c}{Min}     & \multicolumn{1}{c}{Max}  &  \multicolumn{1}{c}{Ave} &   \multicolumn{1}{c}{Min} \\
  \midrule
   & 10 & 19.01& 	-797.56& 	-2441.26& 	20.66& 	 -784.15& 	-2441.26\\
  2 & 20 &-876.37& 	-3886.61& 	-5533.25& 	-845.59& -3871.81& 	-5533.25\\
   & 30 &-42.20& 	-2987.99& 	-6297.43& 	-9.35& 	 -2825.91& 	-6253.98\\
  \midrule
   & 10 & -923.75& 	-1560.27& 	-2265.11& 	-789.21& 	-1498.22& 	-2265.11\\
  3 & 20 &-3602.49& 	-4117.10& 	-4488.56& 	-3231.04& 	-3979.91& 	-4488.56\\
   & 30 &-1816.84& 	-3128.75& 	-5495.17& 	-1666.50& 	-2911.59& 	-5407.16\\
  \midrule
   & 10 &-1016.62& 	-1468.16& 	-1760.37& 	-961.03& 	-1428.68& 	-1739.06\\
  4 & 20 &-47.44& 	    -1981.41& 	-4281.85& 	-25.34& 	-1884.08& 	-4177.93\\
   & 30 &-1691.45& 	-5069.70& 	-6711.06& 	-1356.68& 	-4767.04& 	-6610.53\\
  \bottomrule
  \end{tabular*}
  \end{center}
\end{table}

\begin{table}[H]
  \begin{center}
  \caption{Computational time of BB and Fmincon: non-convex sub-problem (Second)}
  \renewcommand\arraystretch{0.8}
  \begin{tabular*}{\textwidth}{@{\extracolsep{\fill}}ccd{5.4}d{5.4}d{4.5}d{4.5}d{4.5}d{3.5}@{\extracolsep{\fill}}}
  \toprule
  ~ & ~ & \multicolumn{3}{c}{BB} & \multicolumn{3}{c}{Fmincon}\\
  \cmidrule{3-5}\cmidrule{6-8}
    \multicolumn{1}{c}{$K$} &  \multicolumn{1}{c}{$n$} &    \multicolumn{1}{c}{Max} &    \multicolumn{1}{c}{Ave} &   \multicolumn{1}{c}{Min}     & \multicolumn{1}{c}{Max}  &  \multicolumn{1}{c}{Ave} &   \multicolumn{1}{c}{Min} \\
  \midrule
   & 10 &250.94 &	156.82 &	8.88 & 	0.33& 	0.17& 	0.06\\
  2 & 20 &563.30 &157.49 &0.43 & 	0.20& 	0.16& 	0.10\\
   & 30 &838.02 &	556.79 &	363.75 &	3.28& 	0.92& 	0.32\\
  \midrule
   & 10 &75.74 &	33.84 &	0.29 & 	0.10& 	0.08& 	0.06\\
  3 & 20 &1411.34 &	542.89 &	19.10 & 	0.14& 	0.14& 	0.13\\
   & 30 &8656.94 &	3030.72 &	724.30 &	0.74& 	0.39& 	0.30\\
  \midrule
   & 10 &568.24 &	196.93 &	0.34 & 	0.20& 	0.13& 	0.10\\
  4 & 20 &1047.51 &	445.32 &	156.69 &	1.42& 	0.60& 	0.26\\
   & 30 &27044.59 &	8830.70 &	748.77 &	0.48& 	0.43& 	0.38\\
 \bottomrule
  \end{tabular*}
  \end{center}
\end{table}

\section{Conclusion}
In this paper, we study the chance constrained program with quadratic randomness, where random variables that constitute the quadratic randomness are supposed to follow GMD. The key finding for solving this problem is that under some mild conditions, the asymptotic distribution of the quadratic randomness is a univariate GMD, which is employed to approximate the distribution of the quadratic randomness and thus an effective BB method can be used to find the global solution. Furthermore, due to the fitting capability of GMD, the proposed method is a unified approach that could be applied in many different situations.  

An interesting finding of this paper is that using condition number constrained GMD in the real application can not only reduce the asymptotic approximation error, but also learn the characteristics of the data very well. However, the cost of this constrained GMD is that more components are needed, which leads to more computational times for the BB algorithm. Although we have proved the density function of condition number constrained GMD can approximate any density function to any precision, it is still unclear how many components are exactly needed given the approximation precision. In addition, the algorithm to efficiently estimate the parameters of the constrained GMD is also needed to be investigated. We believe that these issues are worth exploring in the future.

\section*{Appendix}

\subsection*{Appendix A: Proof of Proposition \ref{moment}}
We first introduce a relevant lemma from \cite{zhu2020}.
\begin{lemma}\label{zhu}
Given the decision vector $\bm{x}$,
\begin{eqnarray*}
E_{p_j}(c(\bm{\xi},\bm{x}))&=&\frac{1}{2}{\rm tr}(A\Sigma_j)+\frac{1}{2}\bm{\mu}_j^{\top}A\bm{\mu}_j+\bm{a}^{\top}\bm{\mu}_j+a
\\
V_{p_j}(c(\bm{\xi},\bm{x}))&=&\frac{1}{2}{\rm tr}((A\Sigma_j)^2)+(A\bm{\mu}_j+\bm{a})^{\top}\Sigma_j(A\bm{\mu}_j+\bm{a})
\end{eqnarray*}
respectively. Here, ${\rm tr}(\cdot)$ is the trace of a matrix.
\end{lemma}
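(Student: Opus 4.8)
The final statement to be proved is Lemma~\ref{zhu}: for a \emph{single} Gaussian component $p_j$, under which $\bm{\xi}\sim\mathcal{N}(\bm{\mu}_j,\Sigma_j)$, the quadratic form $c(\bm{\xi},\bm{x})=\frac{1}{2}\bm{\xi}^{\top}A\bm{\xi}+\bm{a}^{\top}\bm{\xi}+a$ has the stated mean and variance. The plan is to fix $\bm{x}$ (so that $A$, $\bm{a}$, $a$ are constants) and compute the two moments directly. For the mean I would use linearity of expectation together with the second-moment identity $E_{p_j}[\bm{\xi}\bm{\xi}^{\top}]=\Sigma_j+\bm{\mu}_j\bm{\mu}_j^{\top}$, which gives $E_{p_j}[\bm{\xi}^{\top}A\bm{\xi}]={\rm tr}\!\left(A\,E_{p_j}[\bm{\xi}\bm{\xi}^{\top}]\right)={\rm tr}(A\Sigma_j)+\bm{\mu}_j^{\top}A\bm{\mu}_j$; combining with $E_{p_j}[\bm{\xi}]=\bm{\mu}_j$ immediately yields $E_{p_j}(c(\bm{\xi},\bm{x}))=\frac{1}{2}{\rm tr}(A\Sigma_j)+\frac{1}{2}\bm{\mu}_j^{\top}A\bm{\mu}_j+\bm{a}^{\top}\bm{\mu}_j+a$.

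For the variance I would center the vector by writing $\bm{\xi}=\bm{\mu}_j+\bm{\eta}$ with $\bm{\eta}\sim\mathcal{N}(\bm{0},\Sigma_j)$ and expanding. After subtracting the mean, the fluctuation $c-E_{p_j}(c)$ splits into a linear part $L=(A\bm{\mu}_j+\bm{a})^{\top}\bm{\eta}$ and a centered quadratic part $Q=\frac{1}{2}\bigl(\bm{\eta}^{\top}A\bm{\eta}-{\rm tr}(A\Sigma_j)\bigr)$, so that $V_{p_j}(c)=V(L)+V(Q)+2\,{\rm Cov}(L,Q)$. The cross term vanishes because ${\rm Cov}(L,Q)$ reduces to a third-order moment of the zero-mean Gaussian $\bm{\eta}$, and all odd moments of a centered Gaussian are zero; this is the step I would state explicitly. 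The linear part contributes $V(L)=(A\bm{\mu}_j+\bm{a})^{\top}\Sigma_j(A\bm{\mu}_j+\bm{a})$, which matches the second summand in the claimed formula.

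The only genuinely nontrivial computation is $V(Q)$, and I would carry it out using the whitening reformulation already employed in (\ref{qua_refor}): write $\bm{\eta}=\Sigma_j^{\frac{1}{2}}\bm{w}$ with $\bm{w}\sim\mathcal{N}(\bm{0},I)$, diagonalize the symmetric matrix $\Sigma_j^{\frac{1}{2}}A\Sigma_j^{\frac{1}{2}}=D\Lambda D^{\top}$ with $D$ orthogonal, and set $\bm{v}=D^{\top}\bm{w}\sim\mathcal{N}(\bm{0},I)$, so that $\bm{\eta}^{\top}A\bm{\eta}=\sum_k\lambda_k v_k^2$ is a weighted sum of independent chi-squares. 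Since $V(v_k^2)=2$, this gives $V(\bm{\eta}^{\top}A\bm{\eta})=2\sum_k\lambda_k^2=2\,{\rm tr}\!\left((\Sigma_j^{\frac{1}{2}}A\Sigma_j^{\frac{1}{2}})^2\right)$, and the cyclic property of the trace turns the right-hand side into $2\,{\rm tr}((A\Sigma_j)^2)$; hence $V(Q)=\frac{1}{4}V(\bm{\eta}^{\top}A\bm{\eta})=\frac{1}{2}{\rm tr}((A\Sigma_j)^2)$. Adding $V(L)$ and $V(Q)$ reproduces the stated variance. The expected main obstacle is precisely this fourth-moment evaluation behind $V(\bm{\eta}^{\top}A\bm{\eta})$; diagonalizing as above (or, equivalently, invoking Isserlis'/Wick's theorem for Gaussian fourth moments) avoids messy index bookkeeping and keeps the argument parallel to the reformulation already used elsewhere in the paper.
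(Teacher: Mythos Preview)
Your argument is correct in every detail: the mean computation via $E[\bm{\xi}\bm{\xi}^{\top}]=\Sigma_j+\bm{\mu}_j\bm{\mu}_j^{\top}$, the centering $\bm{\xi}=\bm{\mu}_j+\bm{\eta}$, the vanishing of the cross term by odd Gaussian moments, and the diagonalization that yields $V(\bm{\eta}^{\top}A\bm{\eta})=2\,{\rm tr}((A\Sigma_j)^2)$ are all sound and cleanly executed.

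There is, however, nothing to compare against: the paper does not prove this lemma. It is stated in Appendix~A as a quoted result from \cite{zhu2020} and used as a black box in the proof of Proposition~\ref{moment}. So your write-up is not an alternative route but rather a self-contained proof supplied where the paper relies on an external citation. In that sense it adds value, and the decomposition you chose (linear plus centered quadratic in the whitened variable) is exactly consonant with the reformulation~(\ref{qua_refor}) the paper uses elsewhere, so it fits the surrounding exposition well.
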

\begin{proof}
For then mean, we have
\begin{eqnarray*}
E_{p^*}(c(\bm{\xi},\bm{x}))=\sum_{i=1}^K\pi_iE_{p_i}(c(\bm{\xi},\bm{x})).
\end{eqnarray*}

For the variance, we have
\begin{eqnarray*}
V_{p^*}(c(\bm{\xi},\bm{x}))&=&E_{p^*}(c^2(\bm{\xi},\bm{x}))-E_{p^*}^2(c(\bm{\xi},\bm{x}))\\
&=&\sum_{i=1}^{K}\pi_iE_{p_i}(c^2(\bm{\xi},\bm{x}))-E_{p^*}^2(c(\bm{\xi},\bm{x}))\\
&=&\sum_{i=1}^{K}\pi_i(E_{p_i}^2(c(\bm{\xi},\bm{x}))+V_{p_i}(c(\bm{\xi},\bm{x})))-E_{p^*}^2(c(\bm{\xi},\bm{x})).
\end{eqnarray*}
Notice that
\begin{eqnarray*}
&&\sum_{i=1}^{K}\pi_iE_{p_i}^2(c(\bm{\xi},\bm{x}))-E_{p^*}^2(c(\bm{\xi},\bm{x}))\\
&&=\sum_{i=1}^{K}\pi_iE_{p_i}^2(c(\bm{\xi},\bm{x}))-(\sum_{i=1}^{K}\pi_iE_{p_i}(c(\bm{\xi},\bm{x})))^2\\
&&=\sum_{i=1}^K\pi_i(1-\pi_i)E_{p_i}^2(c(\bm{\xi},\bm{x}))-\sum_{i\neq j}\pi_i\pi_jE_{p_i}(c(\bm{\xi},\bm{x}))E_{p_j}(c(\bm{\xi},\bm{x}))\\
&&=\sum_{i=1}^K\pi_i(\sum_{j\neq i}\pi_j)E_{p_i}^2(c(\bm{\xi},\bm{x}))-\sum_{i\neq j}\pi_i\pi_jE_{p_i}(c(\bm{\xi},\bm{x}))E_{p_j}(c(\bm{\xi},\bm{x}))\\
&&=\sum_{1\leq i<j\leq K}\pi_i\pi_j(E_{p_i}(c(\bm{\xi},\bm{x}))-E_{p_j}(c(\bm{\xi},\bm{x})))^2.
\end{eqnarray*}
Therefore, we have
\begin{eqnarray*}
V_{p^*}(c(\bm{\xi},\bm{x})) &=& \sum_{i=1}^{K}V_{p_i}(c(\bm{\xi},\bm{x}))+\sum_{1\leq i<j\leq K}\pi_i\pi_j(E_{p_i}(c(\bm{\xi},\bm{x}))-E_{p_j}(c(\bm{\xi},\bm{x})))^2.
\end{eqnarray*}
The proof is completed.
\end{proof}

\subsection*{Appendix B: L\'{e}vy's continuity Lemma}
\begin{lemma}
For a sequence of random variables $\{X_n\}_{n=1}^{+\infty}$ and $X$, $X_n\stackrel{d}{\rightarrow}X$ if and only if $\lim\limits_{n\rightarrow+\infty}E\left(e^{{\rm i}tX_n}\right)=E\left(e^{{\rm i}tX}\right)$ for every $t\in\mathbb{R}$. Here, $\stackrel{d}{\rightarrow}$ means convergence in distribution.
\end{lemma}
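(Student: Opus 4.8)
The statement is the classical L\'evy continuity theorem, here in the convenient form where the candidate limit $X$ (hence a genuine characteristic function $\phi(t):=E(e^{{\rm i}tX})$) is given in advance. The plan is to treat the two implications separately. The forward (``only if'') direction is routine: assuming $X_n\stackrel{d}{\rightarrow}X$, for each fixed $t$ the maps $x\mapsto\cos(tx)$ and $x\mapsto\sin(tx)$ are bounded and continuous, so the definition of weak convergence applied to these test functions gives $E(\cos(tX_n))\to E(\cos(tX))$ and $E(\sin(tX_n))\to E(\sin(tX))$, and combining real and imaginary parts yields $E(e^{{\rm i}tX_n})\to E(e^{{\rm i}tX})$.

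The reverse (``if'') direction carries all the difficulty, and I would first establish that $\{X_n\}$ is tight. The key estimate is that for every $\delta>0$,
\begin{eqnarray*}
&&{\rm Pr}\left(|X_n|\geq 2/\delta\right)\leq\frac{1}{\delta}\int_{-\delta}^{\delta}\left(1-\phi_n(t)\right)dt,
\end{eqnarray*}
where $\phi_n(t)=E(e^{{\rm i}tX_n})$; this follows by writing $\frac{1}{2\delta}\int_{-\delta}^{\delta}(1-\phi_n(t))\,dt=E\left(1-\frac{\sin(\delta X_n)}{\delta X_n}\right)$ (the integrand is real, since the odd imaginary part cancels over the symmetric interval) and using $1-\frac{\sin u}{u}\geq\frac12$ for $|u|\geq2$. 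Since $|1-\phi_n(t)|\leq2$ and $\phi_n(t)\to\phi(t)$ pointwise, dominated convergence lets me pass the limit inside the integral, so the right-hand side converges to $\frac{1}{\delta}\int_{-\delta}^{\delta}(1-\phi(t))\,dt$; because $\phi$ is continuous at $0$ with $\phi(0)=1$, this last quantity can be made arbitrarily small by taking $\delta$ small, which yields uniform tightness of $\{X_n\}$.

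With tightness in hand I would conclude by a subsequence argument. By Helly's selection theorem (equivalently Prokhorov's theorem), every subsequence of $\{X_n\}$ admits a further subsequence converging in distribution to some random variable $Y$; applying the already-proved forward implication along that sub-subsequence gives $\phi_Y=\lim\phi_{n_k}=\phi$, so by the uniqueness theorem for characteristic functions $Y\stackrel{d}{=}X$. Since every subsequence therefore has a sub-subsequence converging in distribution to the \emph{same} limit $X$, the whole sequence satisfies $X_n\stackrel{d}{\rightarrow}X$. The main obstacle is precisely the tightness step: justifying the displayed tail inequality, the dominated-convergence interchange, and the role of the continuity of $\phi$ at the origin, together with the appeal to uniqueness of characteristic functions, are what make this direction substantially deeper than its converse.
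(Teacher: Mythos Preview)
Your argument is correct and follows the standard textbook proof of L\'evy's continuity theorem: the forward implication via bounded continuous test functions, and the converse via the tail bound $\Pr(|X_n|\geq 2/\delta)\leq\delta^{-1}\int_{-\delta}^{\delta}(1-\phi_n(t))\,dt$, dominated convergence, tightness, Helly/Prokhorov, and uniqueness of characteristic functions. All the steps you flag as potential obstacles (the Fubini identity, the inequality $1-\tfrac{\sin u}{u}\geq\tfrac12$ for $|u|\geq 2$, nonnegativity of $1-\tfrac{\sin u}{u}$, and continuity of $\phi$ at $0$ with $\phi(0)=1$, which here is automatic since $\phi$ is the characteristic function of the given $X$) are handled correctly.

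There is nothing to compare against, however: the paper does not prove this lemma. It is stated in Appendix~B purely as a quoted result (attributed to \cite{van1998}) and is used only as a black box to pass from convergence of characteristic functions in Theorem~\ref{asy_GMD} to convergence in distribution. So your write-up supplies strictly more than the paper does; if anything, for the purposes of this paper a one-line citation would suffice.
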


\subsection*{Appendix C: Introduction of a Lemma for Theorem \ref{fitness}}
Before the introduction of the lemma, we recall some notations. Since the conclusion of this lemma holds for any $\mathscr{I}_i$, $i\in\{1,\cdots,N\}$, without loss of generality, we rewrite $\Sigma_i$, $\mathscr{I}_i$ and $\underline{f}_i$ as $\Sigma$, $\mathscr{I}$ and $\underline{f}$. The corresponding notations are 
\begin{eqnarray*} 
p(\bm{z})&=&p(\bm{z}|\bm{0},\Sigma)=\frac{1}{(2\pi)^{\frac{m}{2}}|\Sigma|^{-\frac{1}{2}}}e^{-\frac{1}{2}\bm{z}^{\top}\Sigma^{-1}\bm{z}},\\
\chi_{\mathscr{I}}(\bm{z})&=&\underline{f}\mathbf{1}_{\mathscr{I}}(\bm{z})=\left\{\begin{array}{cc}\underline{f}&\bm{z}\in \mathscr{I}\\0&{\rm otherwise}\\ \end{array}\right.,\\
\chi_{\mathscr{I}}\ast p(\bm{z})&=&\int_{\mathbb{R}^m}\chi_{\mathscr{I}}(\bm{y})p(\bm{z}-\bm{y})d\bm{y}=\underline{f}\int_{\mathscr{I}}p(\bm{z}-\bm{y})d\bm{y}.
\end{eqnarray*}

Let $\mathscr{I}_1,\cdots,\mathscr{I}_n$ be a partition of $\mathscr{I}$, that is, any entry of $\mathscr{I}$ belongs to one and only one of $\mathscr{I}_j$, $j=1,\cdots,n$. Denote a function $\chi_{\mathscr{I}}\ast p_n(\bm{z})=\sum_{j=1}^n\underline{f}|\mathscr{I}_j|p(\bm{z}-\bm{y}_j)$, where $\bm{y}_j\in\mathscr{I}_j$, $j=1,\cdots,n$. Denote $|\mathscr{I}_{\max}|=\max\limits_{j\in\{1,\cdots,n\}}\{|\mathscr{I}_j|\}$. By the definition of Riemann integral, the relation between $\chi_{\mathscr{I}}\ast p(\bm{z})$ and $\chi_{\mathscr{I}}\ast p_n(\bm{z})$ is that for any given $\bm{z}$,
\begin{eqnarray*}
\lim_{|\mathscr{I}_{\max}|\rightarrow0}\chi_{\mathscr{I}}\ast p_n(\bm{z})=\chi_{\mathscr{I}}\ast p(\bm{z}).
\end{eqnarray*}

The following lemma indicates that this convergence is a uniform convergence.
\begin{lemma}\label{fitness_lemma}
If $\mathscr{B}$ is a compact set, then for any given $\epsilon>0$, there exists a partition $\mathscr{I}_1,\cdots,\mathscr{I}_n$ of $\mathscr{I}$ such that
\begin{eqnarray*}
&&\int_{\mathscr{B}}|\chi_{\mathscr{I}}\ast p(\bm{z})-\chi_{\mathscr{I}}\ast p_n(\bm{z})|d\bm{z}<\epsilon.
\end{eqnarray*}
\end{lemma}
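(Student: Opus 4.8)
The plan is to upgrade the pointwise convergence of the Riemann sums $\chi_{\mathscr{I}}\ast p_n$ to $\chi_{\mathscr{I}}\ast p$ into a bound that is \emph{uniform in} $\bm z$, by invoking the uniform continuity of the Gaussian density $p(\cdot)$ on a suitable compact set; integrating that uniform bound over the compact region $\mathscr{B}$ then yields the claim directly. First I would rewrite the pointwise difference as a single sum of integrals over the cells of the partition. Since $\{\mathscr{I}_j\}_{j=1}^n$ partitions $\mathscr{I}$ and each $\bm y_j\in\mathscr{I}_j$, we have
\[
\chi_{\mathscr{I}}\ast p(\bm z)-\chi_{\mathscr{I}}\ast p_n(\bm z)
=\underline f\sum_{j=1}^n\int_{\mathscr{I}_j}\bigl(p(\bm z-\bm y)-p(\bm z-\bm y_j)\bigr)\,d\bm y,
\]
so that
\[
\bigl|\chi_{\mathscr{I}}\ast p(\bm z)-\chi_{\mathscr{I}}\ast p_n(\bm z)\bigr|
\le \underline f\sum_{j=1}^n\int_{\mathscr{I}_j}\bigl|p(\bm z-\bm y)-p(\bm z-\bm y_j)\bigr|\,d\bm y.
\]

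Next I would establish the compactness that makes uniform continuity available. The set $\mathscr{K}=\{\bm z-\bm y:\bm z\in\mathscr{B},\ \bm y\in\overline{\mathscr{I}}\}$ is the image of the compact set $\mathscr{B}\times\overline{\mathscr{I}}$ under the continuous map $(\bm z,\bm y)\mapsto\bm z-\bm y$, and is therefore compact; here $\overline{\mathscr{I}}$ denotes the closure of the bounded cube $\mathscr{I}$, and replacing $\mathscr{I}$ by $\overline{\mathscr{I}}$ alters no integral. Since $p$ is continuous, it is uniformly continuous on $\mathscr{K}$: given $\epsilon>0$, I set $\eta=\epsilon/(\underline f\,|\mathscr{I}|\,|\mathscr{B}|)$ and choose $\delta>0$ so that $\bm u,\bm u'\in\mathscr{K}$ with $\|\bm u-\bm u'\|<\delta$ forces $|p(\bm u)-p(\bm u')|<\eta$.

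Finally I would take the partition $\{\mathscr{I}_j\}$ fine enough that every cell has diameter strictly less than $\delta$ (for instance by subdividing $\mathscr{I}$ into congruent sub-cubes of sufficiently small side length). Then for $\bm y\in\mathscr{I}_j$ we have $\|(\bm z-\bm y)-(\bm z-\bm y_j)\|=\|\bm y-\bm y_j\|<\delta$, and both difference arguments lie in $\mathscr{K}$, so the integrand above is bounded by $\eta$ uniformly in $\bm z$ and $j$. This gives the $\bm z$-independent estimate $|\chi_{\mathscr{I}}\ast p(\bm z)-\chi_{\mathscr{I}}\ast p_n(\bm z)|\le \underline f\,\eta\,|\mathscr{I}|$, and integrating over $\mathscr{B}$ produces
\[
\int_{\mathscr{B}}\bigl|\chi_{\mathscr{I}}\ast p(\bm z)-\chi_{\mathscr{I}}\ast p_n(\bm z)\bigr|\,d\bm z
\le \underline f\,\eta\,|\mathscr{I}|\,|\mathscr{B}|=\epsilon,
\]
as required (and strict if $|\mathscr{B}|>0$, using the strict cell estimate). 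The only genuine subtlety is the passage from pointwise to uniform control: the argument hinges on noting that $\bm z$ (over $\mathscr{B}$) and $\bm y$ (over $\mathscr{I}$) both range over compact sets, so every difference $\bm z-\bm y$ stays inside the single compact set $\mathscr{K}$ on which $p$ is uniformly continuous, which is precisely what renders the modulus-of-continuity threshold $\delta$ independent of $\bm z$ and lets one partition-refinement choice work simultaneously for all $\bm z\in\mathscr{B}$.
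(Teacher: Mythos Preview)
Your proof is correct and follows essentially the same strategy as the paper: rewrite the difference as a sum over cells, obtain a uniform-in-$\bm z$ modulus of continuity for $p$, choose the partition finer than the resulting $\delta$, and integrate the uniform bound over $\mathscr{B}$. The only minor difference is in how the $\bm z$-independent $\delta$ is justified---you invoke uniform continuity of $p$ on the compact difference set $\mathscr{K}=\mathscr{B}-\overline{\mathscr{I}}$, whereas the paper uses a translation-invariance argument (if $\delta$ works for one $\bm z$ it works for every $\bm z'$ since $p(\bm z'-\bm y)=p(\bm z-(\bm y-\Delta\bm z))$), which amounts to observing that the Gaussian density is uniformly continuous on all of $\mathbb{R}^m$; both routes are valid and lead to the same estimate.
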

\begin{proof}
First we show that for any given $\epsilon>0$, there exists a $\delta$ that is only related to $\epsilon$ such that if $\bm{y}_1,\bm{y}_2\in\mathbb{R}^m$, $|\bm{y}_1-\bm{y}_2|_{\infty}<\delta$, where $|\cdot|_{\infty}$ represents the maximal absolute value among the entries of a vector, then $|p(\bm{z}-\bm{y}_1)-p(\bm{z}-\bm{y}_2)|<\frac{\epsilon}{\underline{f}|\mathscr{I}||\mathscr{B}|}$. 

If $\bm{z}$ is given, $p(\bm{z}-\bm{y})=p(\bm{y}-\bm{z})$ is a Gaussian density function with mean vector $\bm{z}$, which is uniformly continuous, so by definition there exists a $\delta(\bm{z})$ such that if $\bm{y}_1,\bm{y}_2\in\mathbb{R}^m$, $|\bm{y}_1-\bm{y}_2|_{\infty}<\delta(\bm{z})$, then 
\begin{eqnarray*}
&&|p(\bm{z}-\bm{y}_1)-p(\bm{z}-\bm{y}_2)|<\frac{\epsilon}{\underline{f}|\mathscr{I}||\mathscr{B}|}. 
\end{eqnarray*}

Consider another point $\bm{z}'$ and let $\Delta\bm{z}=\bm{z}'-\bm{z}$. If $|\bm{y}_1-\bm{y}_2|_{\infty}<\delta(\bm{z})$, then 
\begin{eqnarray*}
&&|(\bm{y}_1-\Delta\bm{z})-(\bm{y}_2-\Delta\bm{z})|_{\infty}<\delta(\bm{z})
\end{eqnarray*}
and then
\begin{eqnarray*}
&&|p(\bm{z}'-\bm{y}_1)-p(\bm{z}'-\bm{y}_2)|=|p(\bm{z}-(\bm{y}_1-\Delta\bm{z}))-p(\bm{z}-(\bm{y}_2-\Delta\bm{z}))|<\frac{\epsilon}{\underline{f}|\mathscr{I}||\mathscr{B}|},
\end{eqnarray*}
which implies that given $\epsilon$, there exists a $\delta$ that is invariant with respect to $\bm{z}$ and $\bm{y}$ such that if $\bm{y}_1,\bm{y}_2\in\mathbb{R}^m$, $|\bm{y}_1-\bm{y}_2|_{\infty}<\delta$, 
\begin{eqnarray*}
&&|p(\bm{z}-\bm{y}_1)-p(\bm{z}-\bm{y}_2)|<\frac{\epsilon}{\underline{f}|\mathscr{I}||\mathscr{B}|}. 
\end{eqnarray*}

Then let the partition $\mathscr{I}_1,\cdots,\mathscr{I}_n$ satisfy that for any $\bm{y}_1,\bm{y}_2\in\mathscr{I}_j$, $j=1,\cdots,n$, $|\bm{y}_1-\bm{y}_2|_{\infty}<\delta$. Then we have 
\begin{eqnarray*}
|\chi_{\mathscr{I}}\ast p(\bm{z})-\chi_{\mathscr{I}}\ast p_n(\bm{z})|&=&\left|\underline{f}\sum_{j=1}^{n}\int_{\mathscr{I}_j}(p(\bm{z}-\bm{y})-p(\bm{z}-\bm{y}_j))d\bm{y}\right|\\
&\leq&\underline{f}\sum_{j=1}^n\int_{\mathscr{I}_j}\left|p(\bm{z}-\bm{y})-p(\bm{z}-\bm{y}_j)\right|d\bm{y}\\
&\leq&\underline{f}\sum_{j=1}^n\frac{|\mathscr{I}_j|\epsilon}{\underline{f}|\mathscr{I}||\mathscr{B}|}=\frac{\epsilon}{|\mathscr{B}|}.
\end{eqnarray*}

Finally, we have
\begin{eqnarray*}
&&\int_{\mathscr{B}}|\chi_{\mathscr{I}}\ast p(\bm{z})-\chi_{\mathscr{I}}\ast p_n(\bm{z})|d\bm{z}<\epsilon.
\end{eqnarray*}
The proof is completed.
\end{proof}

\subsection*{Appendix D: Branch-and-bound algorithm proposed by \cite{hu2021}}
\renewcommand{\algorithmicrequire}{\textbf{Input:}}
\renewcommand{\algorithmicensure}{\textbf{Output:}}
{\small
\begin{algorithm}[H]
\caption{branch-and-bound (BB)}
\begin{algorithmic}[1]
\REQUIRE $i=0$, $v^*=+\infty$, $\mathscr{Y}^0=\{\bm{y}\in\mathbb{R}^K:\underline{y}_i\leq y_i\leq\overline{y}_i,i=1,\cdots,K\}$.
\ENSURE Global optimal solution $\bm{x}^*$ and global optimal value $v^*$.
\STATE Solve problem ${\rm QCCP_{relax}}$.\\
\textbf{if} $\rm QCCP_{relax}$ is infeasible, the problem $\rm QCCP_{lin}$ is infeasible, stop.\\
\textbf{else}, $v_0:=$ the optimal value, $(\bm{x}^c,\bm{y}^c):=$ the optimal solution.\\
~~~~\textbf{if} $\bm{x}^c$ is feasible to $\rm QCCP_{lin}$, $v^*:=v_0$, $\bm{x}^*:=\bm{x}^c$, stop.\\
~~~~\textbf{else}, $\Omega=\{(\mathscr{Y}^i,v_i)\}$. Set $i:=i+1$. Go to step \ref{step_2}.\\
~~~~\textbf{end if}\\
\textbf{end if}
\label{step_1}
\STATE \textbf{if} $\Omega=\emptyset$, $\bm{x}^*$ is the optimal solution. Stop.\\
\textbf{else} Go to step \ref{step_3}.\\
\textbf{end if}
\label{step_2}
\STATE Chose and remove one branch $(\mathscr{Y}^i,v_i)$ from $\Omega$ with minimal $v_i$. Subdivide the branch $\mathscr{Y}^i$ into $\mathscr{Y}^i_1$ and $\mathscr{Y}^i_2$ by dividing the longest edge of $\mathscr{Y}^i$ at its midpoint. Denote by $\mathscr{Y}^i_1$ the upper rectangle that includes the upper corner $(\overline{y}_1,\cdots,\overline{y}_K)$ and $\mathscr{Y}^i_2$ the lower rectangle that includes the upper corner $(\underline{y}_1,\cdots,\underline{y}_K)$.\\
\textbf{if} $\sum_{j=1}^K\pi_j\underline{y}^1_j>1-\alpha$, where $(\underline{y}^1_1,\cdots,\underline{y}^1_K)$ is the lower corner of $\mathscr{Y}^i_1$, set $\underline{v}^i_2=v_i$ and $\Omega:=\Omega\bigcup(\mathscr{Y}^i_2,\underline{v}^i_2)$. Set $i:=i+1$. Go to step \ref{step_2}.\\
\textbf{else if} $\sum_{j=1}^K\pi_j\overline{y}^2_j<1-\alpha$, where $(\overline{y}^2_1,\cdots,\overline{y}^2_K)$ is the upper corner of $\mathscr{Y}^i_2$, solve problem $\rm{CCP_{relax}}$ with $\mathscr{Y}_1$.\\
~~~~\textbf{is} it is infeasible, $\Omega:=\Omega$, $i:=i+1$, go to step \ref{step_2}.\\
~~~~\textbf{else} denote the optimal value $\underline{v}^i_1$ and the optimal solution $(\bm{x}^i_1,\bm{y}^i_1)$\\
~~~~~~~~\textbf{if} $\underline{v}^i_1>v^*$, $\Omega:=\Omega$, $i:=i+1$, go to step \ref{step_2}.\\
~~~~~~~~\textbf{else if} $\bm{x}^i_1$ is feasible to $\rm QCCP_{lin}$, $v^*:=\underline{v}^i_1$, $\bm{x}^*:=\bm{x}^i_1$, and delete from $\Omega$ all branches $(\mathscr{Y}_i,v_i)$ with $v_i\geq v^*$. Set $i:=i+1$, go to step \ref{step_2}.\\
~~~~~~~~\textbf{else} $\Omega:=\Omega\bigcup(\mathscr{Y}^i_1,\underline{v}^i_1)$, $i:=i+1$, go to step \ref{step_2}.\\
~~~~~~~~\textbf{end if}\\
~~~~\textbf{end if}\\
\textbf{else} let $\underline{v}^i_2=v_i$. Set $\Omega=\Omega\bigcup(\mathscr{Y}_2,\underline{v}^i_2)$. Solve problem $\rm{QCCP_{relax}}$ with $\mathscr{Y}_1$ and derive the optimal value $\underline{v}^i_1$.\\
~~~~\textbf{if} it is infeasible, $i:=i+1$, go to step \ref{step_2}.\\
~~~~\textbf{else} denote the optimal value $\underline{v}^i_1$ and the optimal solution $(\bm{x}^i_1,\bm{y}^i_1)$.\\
~~~~~~~~\textbf{if} $\underline{v}^i_1>v^*$, $i:=i+1$, go to step \ref{step_2}.\\
~~~~~~~~\textbf{else if} $\bm{x}^i_1$ is feasible to $\rm QCCP_{lin}$, $v^*:=\underline{v}^i_1$, $\bm{x}^*:=\bm{x}^i_1$, and delete from $\Omega$ all instances $(\mathscr{Y}_i,v_i)$ with $v_i\geq v^*$. Set $i:=i+1$, go to step \ref{step_2}.\\
~~~~~~~~\textbf{else}, $\Omega:=\Omega\bigcup(\mathscr{Y}^i_1,\underline{v}^i_1)$, $i:=i+1$, go to step \ref{step_2}.\\
~~~~~~~~\textbf{end if}\\
~~~~\textbf{end if}\\
\textbf{end if}
\label{step_3}
\RETURN $\bm{x}^*$, $v^*$.
\end{algorithmic}
\end{algorithm}
}

\bibliographystyle{ormsv080}
\bibliography{mixture}

\begin{thebibliography}{30}
\expandafter\ifx\csname natexlab\endcsname\relax\def\natexlab#1{#1}\fi
\expandafter\ifx\csname url\endcsname\relax
  \def\url#1{{\tt #1}}\fi
\expandafter\ifx\csname urlprefix\endcsname\relax\def\urlprefix{URL }\fi
\expandafter\ifx\csname urlstyle\endcsname\relax
  \expandafter\ifx\csname doi\endcsname\relax
  \def\doi#1{doi:\discretionary{}{}{}#1}\fi \else
  \expandafter\ifx\csname doi\endcsname\relax
  \def\doi{doi:\discretionary{}{}{}\begingroup \urlstyle{rm}\Url}\fi \fi

\bibitem[{Calafiore and Campi(2005)}]{campi2005}
Calafiore, G., M.~C. Campi. 2005.
\newblock Uncertain convex programs: Randomized solutions and confidence
  levels.
\newblock {\it Mathematical Programming\/} {\bf 102}(1) 25--46.

\bibitem[{Calafiore and El~Ghaoui(2006)}]{calafiore2006}
Calafiore, G.~C., L.~El~Ghaoui. 2006.
\newblock On distributionally robust chance-constrained linear programs.
\newblock {\it Journal of Optimization Theory and Applications\/} {\bf 130}(1)
  1--22.

\bibitem[{Campi and Garatti(2008)}]{campi2008}
Campi, M.~C., S.~Garatti. 2008.
\newblock The exact feasibility of randomized solutions of uncertain convex
  programs.
\newblock {\it SIAM Journal on Optimization\/} {\bf 19} 1211--1230.

\bibitem[{Campi and Garatti(2011)}]{campi2011}
Campi, M.~C., S.~Garatti. 2011.
\newblock A sampling-and-discarding approach to chance-constrained
  optimization: Feasibility and optimality.
\newblock {\it Journal of Optimization Theory and Applications\/} {\bf 148}
  257--280.

\bibitem[{Charnes et~al.(1958)Charnes, Cooper, and Symonds}]{charnes1958}
Charnes, A., W.~W. Cooper, G.~H. Symonds. 1958.
\newblock Cost horizons and certainty equivalents: An approach to stochastic
  programming of heating oil.
\newblock {\it Management Science\/} {\bf 4} 235--263.

\bibitem[{Chen et~al.(2022)Chen, Zhang, and Song}]{chen2022}
Chen, G., H.~C. Zhang, Y.~H. Song. 2022.
\newblock {Chance-constrained DC optimal power flow with non-gaussian
  distributed uncertainties}.
\newblock {\it 2022 IEEE Power $\&$ Energy Society General Meeting\/}. 1--5.
\newblock \doi{10.1109/PESGM48719.2022.9916658}.

\bibitem[{Chen et~al.(2018)Chen, Peng, and Liu}]{chen2018}
Chen, Z.~P., S.~Peng, J~Liu. 2018.
\newblock Data-driven robust chance constrained problems: A mixture model
  approach.
\newblock {\it Journal of Optimization Theory and Applications\/} {\bf 179}
  1065--1085.

\bibitem[{Cui et~al.(2013)Cui, Zhu, Sun, and Li}]{cui2013}
Cui, X.~T., S.~S. Zhu, X.~L. Sun, D.~Li. 2013.
\newblock Nonlinear portfolio selection using approximating parametric
  value-at-risk.
\newblock {\it Journal of Banking and Finance\/} {\bf 37}(6) 2124--2139.

\bibitem[{El~Ghaoui et~al.(2003)El~Ghaoui, Oks, and Oustry}]{el2003}
El~Ghaoui, L., M.~Oks, F.~Oustry. 2003.
\newblock Worst-case value-at-risk and robust portfolio optimization: A conic
  programming approach.
\newblock {\it Operations Research\/} {\bf 51}(4) 543--556.

\bibitem[{Golub and Van~Loan(2013)}]{golub2013}
Golub, G.~H., C.~F. Van~Loan. 2013.
\newblock {\it Matrix Computations\/}.
\newblock 4th ed. The Johns Hopkins University, Baltimore.

\bibitem[{Hanasusanto et~al.(2017)Hanasusanto, Roitch, Kuhn, and
  Wiesemann}]{hanasusanto2017}
Hanasusanto, G.~A., V.~Roitch, D.~Kuhn, W.~Wiesemann. 2017.
\newblock Ambiguous joint chance constraints under mean and dispersion
  information.
\newblock {\it Operations Research\/} {\bf 65}(3) 751--767.

\bibitem[{Henrion and Moller(2012)}]{henrion2012}
Henrion, R., A.~Moller. 2012.
\newblock A gradient formula for linear chance constraints under gaussian
  distribution.
\newblock {\it Mathematics of Operations Research\/} {\bf 37}(3) 475--488.

\bibitem[{Hu et~al.(2022)Hu, Sun, and Zhu}]{hu2021}
Hu, Z.~L., W.~J. Sun, S.~S. Zhu. 2022.
\newblock Chance constrained programs with gaussian mixture models.
\newblock {\it IISE Transactions\/} {\bf 54}(12) 1117--1130.

\bibitem[{Hull(2009)}]{hull2009}
Hull, J. 2009.
\newblock {\it Options, Futures and Other Derivatives\/}.
\newblock Pearson Prentice Hall, New Jersey.

\bibitem[{Jorion(2007)}]{jorion2007}
Jorion, P. 2007.
\newblock {\it Value at Risk: The New Benchmark for Managing Financial Risk\/}.
\newblock McGraw-Hill Companies, New York.

\bibitem[{Kishida and Nagahara(2023)}]{kishida2023}
Kishida, M., M.~Nagahara. 2023.
\newblock Risk-aware maximum hands-off control using worst-case conditional
  value-at-risk.
\newblock {\it IEEE Transactions on Automatic Control\/}
  \doi{10.1109/TAC.2023.3235246}.

\bibitem[{Luedtke and Ahmed(2008)}]{luedtke2008}
Luedtke, J., S.~Ahmed. 2008.
\newblock A sample approximation approach for optimization with probabilistic
  constraints.
\newblock {\it SIAM Journal on Optimization\/} {\bf 19} 674--699.

\bibitem[{Marron and Wand(1992)}]{marron1992}
Marron, J.~S., M.~P. Wand. 1992.
\newblock Exact mean integrated squared error.
\newblock {\it Annals of Statistics\/} {\bf 20} 712--736.

\bibitem[{McLachlan and Peel(2000)}]{mclachlan}
McLachlan, G., D~Peel. 2000.
\newblock {\it Finite Mixture Models\/}.
\newblock Wiley, New York.

\bibitem[{McLachlan and Krishnan(1997)}]{mclachlan1997}
McLachlan, G.~J., T.~Krishnan. 1997.
\newblock {\it The EM algorithm and Extensions\/}.
\newblock John Wiley \& Sons, New York.

\bibitem[{Miller and Wagner(1965)}]{miller1965}
Miller, L.~B., H.~Wagner. 1965.
\newblock Chance-constrained programming with joint constraints.
\newblock {\it Operations Research\/} {\bf 13} 930--945.

\bibitem[{Nemirovski and Shapiro(2006)}]{nemirovski2006}
Nemirovski, A., A.~Shapiro. 2006.
\newblock Convex approximation of chance constrained programs.
\newblock {\it SIAM Journal on Optimization\/} {\bf 17}(4) 969--996.

\bibitem[{Pearson(1894)}]{pearson1894}
Pearson, K. 1894.
\newblock Contributions to the mathematical theory of evolution.
\newblock {\it Philosophical Transactions of the Royal Society of London\/}
  {\bf 54} 326--330.

\bibitem[{Pr\'ekopa(1970)}]{prekopa1970}
Pr\'ekopa, A. 1970.
\newblock On probabilistic programming.
\newblock In proceedings of the Princeton Symposium on Mathematical
  Programming, Princeton University Press, Princeton, NJ, 113--138.

\bibitem[{Ren et~al.(2022)Ren, Ahn, and Kamgarpour}]{ren2022}
Ren, K., H.~Ahn, M.~Kamgarpour. 2022.
\newblock Chance-constrained trajectory planning with multimodal environmental
  uncertainty.
\newblock {\it IEEE Control Systems Letters\/} {\bf 7} 13--18.

\bibitem[{Shapiro et~al.(2009)Shapiro, Dentcheva, and
  Ruszczynski}]{shapiro2009}
Shapiro, A., D.~Dentcheva, A.~Ruszczynski. 2009.
\newblock {\it Lectures on Stochastic Programming: Modeling and Theory\/}.
\newblock Society for Industrial and Applied Mathematics, Philadelphia, PA.

\bibitem[{Van~der Vaart(1998)}]{van1998}
Van~der Vaart, A.~W. 1998.
\newblock {\it Asymptotic Statistics\/}.
\newblock Cambridge University Press.

\bibitem[{Wilson(2000)}]{wilson2000}
Wilson, R. 2000.
\newblock {MGMM: multiresolution Gaussian mixture models for computer vision}.
\newblock {\it Proceedings 15th International Conference on Pattern
  Recognition. ICPR-2000\/}, vol.~1. 212--215.
\newblock \doi{10.1109/ICPR.2000.905305}.

\bibitem[{Zhu et~al.(2020)Zhu, Zhu, Pei, and Cui}]{zhu2020}
Zhu, S.~S., W.~Zhu, X.~Pei, X.~T. Cui. 2020.
\newblock Hedging crash risk in optimal portfolio selection.
\newblock {\it Journal of Banking and Finance\/} {\bf 119} 105905.

\bibitem[{Zymler et~al.(2013)Zymler, Kuhn, and Rustem}]{zymler2013}
Zymler, S., D.~Kuhn, B.~Rustem. 2013.
\newblock Distributionally robust joint chance constraints with second-order
  moment information.
\newblock {\it Mathematical Programming\/} {\bf 137} 167--198.

\end{thebibliography}
\end{document}